\newcommand{\al}{\alpha}
\def\wt{{\rm wt}}
\def\de{\delta}
\def\bft{{\bf t}}
\def\C{{\mathbb C}}
\def\Q{{\mathbb Q}}
\def\Z{{\mathbb Z}}
\def\1{{\bf 1}}
\def\l{\lambda}
\def \wt{{\rm wt}}
\def \<{\langle}
\def \>{\rangle}
\def \wg{\widehat{\frak g}}
\def \bft{{\bf t}}
\def \pf{\noindent {\em Proof \, \,}}
\def\theequation{5.\arabic{equation}}
\renewcommand{\theequation}{\thesection.\arabic{equation}}
\newtheorem{theorem}{Theorem}[section]
\newtheorem{prop}[theorem]{Proposition}
\newtheorem{lem}[theorem]{Lemma}
\newtheorem{cor}[theorem]{Corollary}
\newtheorem{remark}[theorem]{Remark}
\theoremstyle{definition}
\newtheorem{defn}[theorem]{Definition}
\begin{document}
\begin{center}
{\Large {\bf The commutant of $L_{\widehat{\frak{sl}}_{2}}(n,0)$ in the vertex operator algebra $L_{\widehat{\frak{sl}}_{2}}(1,0)^{\otimes n}$ }} \\

\vspace{0.5cm} Cuipo(Cuibo) Jiang\footnote{Supported by China NSF grants 10931006, 11371245,  China RFDP grant 2010007310052, and the Innovation Program of Shanghai Municipal Education Commission (11ZZ18)}
\\
Department of Mathematics,  Shanghai Jiaotong University, Shanghai, 200240, China \\
email: cpjiang@sjtu.edu.cn\\

\vspace{.2 cm} Zongzhu Lin\\
 Department of Mathematics, Kansas State University, Manhattan, KS 66506, USA\\
email: zlin@math.ksu.edu
\end{center}
\hspace{1cm}

\begin{abstract}

We study the commutant $L_{\widehat{\frak{sl}}_{2}}(n,0)^c$ of $L_{\widehat{\frak{sl}}_{2}}(n,0)$ in the vertex operator algebra $L_{\widehat{\frak{sl}}_{2}}(1,0)^{\otimes n}$, for $n\geq 2$. The main results include a complete classification of all irreducible  $L_{\widehat{\frak{sl}}_{2}}(n,0)^c$-modules and  that $L_{\widehat{\frak{sl}}_{2}}(n,0)^c$ is a rational vertex operator algebra. As a consequence, every  irreducible $L_{\widehat{\frak{sl}}_{2}}(n,0)^c$-module arises from the coset construction as conjectured in \cite{LS}.

 2000MSC:17B69

\end{abstract}

\section{Introduction}

The Goddard-Kent-Olive coset construction is a construction of certain irreducible lowest weight unitary modules for the
Virasoro algebra using standard modules for affine Lie algebras \cite{GKO1}, \cite{GKO2}. The coset construction was later generalized in \cite{FZ}. It has been  proved in \cite{FZ} that under suitable conditions the centralizer  of a vertex operator algebra  is  a vertex operator subalgebra.  Coset constructions have been used to give many important conformal field theories associated with Kac-Moondy affine algebras(see for examples \cite{BEHHH}, \cite{DL}, \cite{GZ},  \cite{KR}, \cite{DLY2}, \cite{DLWY}, \cite{DW1}, \cite{DW2}, \cite{ALY}, \cite{X1}, \cite{X2}, etc).

Let ${\frak g}$ be a simple finite-dimensional Lie algebra over $\C$ and $\widehat{\frak g}$ the affine Kac-Moondy  Lie algebra associated with ${\frak g}$. For a positive integer $l$, let $L_{\widehat{\frak g}}(l,0)$ be the simple vertex operator algebra corresponding to $\widehat{\frak g}$ and $l$. Then   the  vertex operator algebra $L_{\widehat{\frak g}}(l,0)$ is regular \cite{DLM1}, \cite{LL}. It is well known that $L_{\widehat{\frak g}}(l,0)$ plays a very important role in the theory of vertex operator algebras. For $n\geq 2$, $n\in\Z_{+}$, let $V=L_{\widehat{\frak g}}(l,0)^{\otimes n}$ be  tensor powers of $L_{\widehat{\frak g}}(l,0)$. Then $V$ is still a regular vertex operator algebra and $L_{\widehat{\frak g}}(nl,0)$ can be naturally regarded as a vertex operator subalgebra of $V$ \cite{GKO2}, \cite{LL}, \cite{K}. The centralizer  (also called commutant) $L_{\widehat{\frak g}}(nl,0)^c$ of $L_{\widehat{\frak g}}(nl,0)$ in $V$   is also  called the coset vertex operator algebra associated to the pair $(V, L_{\widehat{\frak g}}(l,0))$. Although rational affine vertex operator algebras and their representations have been well understood, the coset vertex operator algebra $L_{\widehat{\frak g}}(nl,0)^c$ is usually very complicated and little has been known. In this paper we focus on the case ${\frak g}=\frak{sl}_{2}(\C)$ and $l=1$.

By the Goddard-Kent-Olive coset construction,  $L_{\widehat{\frak{sl}}_{2}}(n+1,0)^c$  for $n\geq 1$ contains a
rational vertex operator algebra $L=\otimes_{i=1}^{n}L(c_i, 0)$ and the direct sum decomposition of $L_{\widehat{\frak{sl}}_{2}}(n+1,0)^c$ into irreducible $L$-submodules are known explicitly  (see also \cite{LY}, \cite{LS}). Here $c_i=1-\frac{6}{(i+2)(i+3)}$ and $L(c_i,0)$ is the rational simple Virasoro vertex operator algebra with central charge $c_i$, $1\leq i\leq n$. For simplicity, we set $M^{(n)}=L_{\widehat{\frak{sl}}_{2}}(n+1,0)^c$  in this paper. For $n=2$, the commutant $M^{(2)}$ is one of vertex operator algebras studied in \cite{CL} and its structure, representations and ratonality have been established completely. It was proved in \cite{KLY} ( see also \cite{KMY} and \cite{DLMN}) that  the centralizer $M^{(3)}$ is isomorphic to the simple vertex operator algebra $V_{\sqrt{2}A_{2}}^+$, whose irreducible modules have been classified in \cite{ADL}. While the rationality of $M^{(3)}$ follows from a general result obtained in \cite{DJL}. For $n\geq 4$, the structure of $M^{(n)}$ remains elusive. Based on the work of \cite{LY},
the commutant $M^{(n)}$ was studied in \cite{LS} regarded as the commutant subalgebra of the parafermion vertex operator subalgebra inside $V_{\sqrt{2}A_{n}}$. It was shown in \cite{LS} that $M^{(n)}$ is generated by its weight two subspace.
Irreducible modules from the coset construction have  also been  studied deeply in \cite{LS}. It was conjectured in \cite{LS} that each irreducible $M^{(n)}$-modules should arise from the coset construction. The main goal of this paper is to classify all irreducible $M^{(n)}$-modules (Theorem~\ref{th4.1}) and then use the classification to prove that $M^{(n)}$ is a rational vertex operator algebra for all $n$ (Theorem~\ref{th6.1}). As a consequence the above conjecture is true.

 To prove the main results, the Zhu algebra $A(M^{(n)})$ of $M^{(n)}$ plays an important role. By first giving a  spanning set of $M^{(n)}$, we prove that the Zhu algebra  $A(M^{(n)})$ is generated by  the elements in weight two subspace.  To classify the irreducible  modules of $M^{(n)}$, we use the structure of the Zhu algebra $A(M^{(n)})$ and the known results on $M^{(2)}$ and $M^{(3)}$ to show that irreducible modules of
$M^{(n)}$ can be divided into two types. We find a one-to-one correspondence between the first type irreducible modules of $M^{(n)}$  and the irreducible modules of  a quotient of the group algebra $\C[S_{n+1}]$ corresponding to partitions of $n+1$ of at most two parts. They are also in one-to-one correspondence with irreducible representations of certain Schur algebra $S(2, n+1)$.  The structures of the second type irreducible modules of $M^{(n)}$ are a little more complicated. We have to do  more technical analysis.

It is widely believed that if both  a vertex operator algebra $V$  and its subalgebra $U$ are rational, then the commutant $U^c$ of $U$ in $V$ is rational.  In this paper we prove that this is true for  $L_{\widehat{\frak{sl}}_{2}}(n+1,0)^c=M^{(n)}$.  We achieve this by two steps. The first step is to show that the Zhu algebra $A(M^{(n)})$ is a finite-dimensional semi-simple associative algebra. The proof uses generating sets and conditions of $A(M^{(n)})$ established in Section 4 and  the established isomorphisms from some quotients of $A(M^{(n)})$ to some quotients of the semi-simple associative algebra $\C[S_{n+1}]$ (cf. Lemma~\ref{l4.3}) to prove that every admissible $M^{(n)}$-module is completely reducible.   By the semi-simplicity of $A(M^{(n)})$, if two irreducible modules of $M^{(n)}$ have the same lowest weight,  the extension of one module by the other is trivial. The next step is to show that ${\rm Ext}_{M^{(n)}}(M^2,M^1)=0$ for any two irreducible admissible $M^{(n)}$-modules $M^1$ and $M^2$.
 Thus  every admissible module is semi-simple (cf. \cite{A}).

To compute the extensions, we  choose a suitable  rational vertex operator subalgebra $U$ of $M^{(n)}$ with the same Virasoro vector. Using the fusion rule of rational Virasoro vertex operator algebras and inductive assumption, we prove that  for any $U$-submodules  $N, N^1, N^2$ of  $M^{(n)}, M^1, M^2$ respectively, the fusion rule $I_{U}\left(\begin{tabular}{c}
$N^1$\\
$N$\  $N^2$
\end{tabular}\right)=0$. Then it follows from a lemma given in \cite{DJL} that  ${\rm Ext}_{M^{(n)}}(M^2,M^1)=0$. We expect that our results and methods used in this paper can be some help in studying $L_{\widehat{\frak{g}}}(1,0)^{\otimes(n+1)}$ for general finite-dimensional Lie algebra $\frak{g}$.

This paper is organized as follows: In Section 2, we recall some basic notations and facts on vertex operator algebras. In Section 3, we first list some known results on the commutant $M^{(n)}$ obtained in \cite{LS} and \cite{LY}. We next construct a spanning set (Lemma~\ref{l4.10})  and generators of the Zhu algebra of $M^{(n)}$ (Theorem~\ref{p4.2}). Section 4 is dedicated to the classification of irreducible modules of $M^{(n)}$. The rationality of $M^{(n)}$ is established in Section 5.

\section{Preliminaries}
\def\theequation{2.\arabic{equation}}
\setcounter{equation}{0}

Let $V=(V,Y,{\bf 1},\omega)$ be a vertex operator algebra \cite{B},
\cite{FLM}. We review various notions of $V$-modules and the definition of rational vertex operator algebras and some basic facts (cf.
\cite{FLM}, \cite{Z}, \cite{DLM3}, \cite{DLM4}).  We also recall intertwining
operator,  fusion rules and some consequences following \cite{FHL}, \cite{ADL}, \cite{W}, \cite{DMZ}, \cite{A}, \cite{DJL}.

\begin{defn} A weak $V$ module is a vector space $M$ equipped
with a linear map
$$
\begin{array}{ll}
Y_M: & V \rightarrow {\rm End}(M)[[z,z^{-1}]]\\
 & v \mapsto Y_M(v,z)=\sum_{n \in \Z}v_n z^{-n-1},\ \ v_n \in {\rm End}(M)
\end{array}
$$
satisfying the following:

1) $v_nw=0$ for $n>>0$ where $v \in V$ and $w \in M$

2) $Y_M( {\textbf 1},z)=Id_M$

3) The Jacobi identity holds:
\begin{eqnarray}
& &z_0^{-1}\de \left({z_1 - z_2 \over
z_0}\right)Y_M(u,z_1)Y_M(v,z_2)-
z_0^{-1} \de \left({z_2- z_1 \over -z_0}\right)Y_M(v,z_2)Y_M(u,z_1) \nonumber \\
& &\ \ \ \ \ \ \ \ \ \ =z_2^{-1} \de \left({z_1- z_0 \over
z_2}\right)Y_M(Y(u,z_0)v,z_2).
\end{eqnarray}
\end{defn}


\begin{defn}
An admissible $V$ module is a weak $V$ module  which carries a
$\Z_+$-grading $M=\bigoplus_{n \in \Z_+} M(n)$, such that if $v \in
V_r$ then $v_m M(n) \subseteq M(n+r-m-1).$
\end{defn}

\begin{defn}
An ordinary $V$ module is a weak $V$ module which carries a
$\C$-grading $M=\bigoplus_{\l \in \C} M_{\l}$, such that:

1) $dim(M_{\l})< \infty,$

2) $M_{\l+n}=0$ for fixed $\l$ and $n<<0,$

3) $L(0)w=\l w=\wt(w) w$ for $w \in M_{\l}$ where $L(0)$ is the
component operator of $Y_M(\omega,z)=\sum_{n\in\Z}L(n)z^{-n-2}.$
\end{defn}

\begin{remark} \ It is easy to see that an ordinary $V$-module is an admissible one. If $W$  is an
ordinary $V$-module, we simply call $W$ a $V$-module.
\end{remark}

We call a vertex operator algebra rational if the admissible module
category is semisimple. We have the following result from
\cite{DLM3} (also see \cite{Z}).

\begin{theorem}\label{tt2.1}
If $V$ is a  rational vertex operator algebra, then $V$ has finitely
many irreducible admissible modules up to isomorphism and every
irreducible admissible $V$-module is ordinary.
\end{theorem}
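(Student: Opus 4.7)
The plan is to invoke Zhu's algebra $A(V)$ together with the bijection, due to Zhu, between isomorphism classes of irreducible admissible $V$-modules and isomorphism classes of irreducible $A(V)$-modules. Concretely, if $M=\oplus_{n\ge 0}M(n)$ is an irreducible admissible $V$-module, its top level $M(0)$ carries an irreducible $A(V)$-module structure via the zero-mode map $[v]\mapsto o(v)|_{M(0)}$, where $o(v)=v_{\wt v-1}$ for homogeneous $v$; conversely, every irreducible $A(V)$-module $U$ appears as the top level of the irreducible quotient $L(U)$ of the generalized Verma admissible $V$-module $\bar M(U)$. Thus both assertions of the theorem reduce to questions about the representation theory of $A(V)$ together with the structure of the generalized Verma modules.

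For the finiteness statement, I would first show that rationality forces $A(V)$ to be semisimple: a non-split extension $0\to U_1\to U\to U_2\to 0$ of $A(V)$-modules would pull back through the generalized Verma functor to a non-split extension of admissible $V$-modules, contradicting complete reducibility. Next I would argue that $A(V)$ has only finitely many isomorphism classes of irreducibles, each finite-dimensional. Here the key input is that $V$ itself, as a cyclic admissible $V$-module generated by $\1$, is a finite direct sum of irreducibles by rationality, and the zero-mode action combined with the universal property of $\bar M(U)$ realizes every irreducible $A(V)$-module as an ingredient of this finite decomposition; no infinite family can appear.

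For ordinariness, let $M$ be an irreducible admissible $V$-module. By the above, $M(0)$ is a finite-dimensional irreducible $A(V)$-module, so Schur's lemma forces $L(0)=o(\omega)$ to act on $M(0)$ as a single scalar $h\in\C$. The bracket identity $[L(0),v_n]=(\wt v-n-1)v_n$ then propagates this eigenvalue: $L(0)$ acts as $h+n$ on $M(n)$, yielding a $\C$-grading $M=\oplus_{n\ge 0}M_{h+n}$ bounded below. Finite-dimensionality of each $M_{h+n}$ follows from the fact that $M$ is a quotient of $\bar M(M(0))$, whose $n$-th graded component is finite-dimensional (being spanned by finitely many ordered monomials of fixed total degree acting on the finite-dimensional $M(0)$).

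The main obstacle is the second step above. Semisimplicity of $A(V)$ alone does not imply that there are only finitely many irreducible $A(V)$-modules, since a countable direct sum of matrix algebras is semisimple. One must use rationality of $V$ as a global condition, not just through the top-level functor applied to individual modules, in order to exclude infinite families of irreducibles and to bound the dimensions of their top levels. Handling this delicate point requires the careful analysis of the admissible submodule structure of $V$ itself and of the images of zero-modes in $\End M(0)$ as $M$ varies, as carried out in \cite{DLM3} and \cite{Z}.
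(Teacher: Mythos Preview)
The paper does not prove this theorem; it is quoted from \cite{DLM3} (with \cite{Z} as the original source for the untwisted case), so there is no ``paper's own proof'' to compare against. Your outline is the standard one and matches the strategy of those references: use Zhu's bijection between simple admissible $V$-modules and simple $A(V)$-modules, show $A(V)$ is semisimple, and then use finite-dimensionality of top levels to get ordinariness.

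That said, your second step contains a genuine error, not just a delicate point. It is \emph{not} true that every irreducible $A(V)$-module occurs in the decomposition of $V$ as an admissible module over itself: already for the rational Virasoro VOA $L(c_m,0)$ the modules $L(c_m,h)$ with $h\neq 0$ are not constituents of $V$. So ``realizing every irreducible $A(V)$-module as an ingredient of the finite decomposition of $V$'' cannot work, and you should not present it as the intended argument even provisionally. The route actually taken in \cite{DLM3} is to prove that $A(V)$ is a \emph{finite-dimensional} semisimple algebra: one shows that any simple $A(V)$-module is finite-dimensional (by analyzing $\bar M(U)$ and using rationality), and then that $A(V)$ itself is finite-dimensional by building, from a putative infinite supply of simples or from the regular $A(V)$-module, an admissible $V$-module which fails complete reducibility. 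Once $\dim A(V)<\infty$, Wedderburn gives finitely many simples and your ordinariness argument (Schur on $M(0)$, propagation via $[L(0),v_n]$, finite-dimensional graded pieces of $\bar M(M(0))$) goes through as written.
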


Suppose that $V$ is a rational vertex operator algebra and let
$M^1,...,M^k$ be the irreducible  modules such that
$$M^i=\oplus_{n\geq 0}M^i_{\l_i+n}$$
where $\l_i\in\Q$ \cite{DLM4},  $M^i_{\l_i}\ne 0$ and each
$M^i_{\l_i+n}$ is finite dimensional.

A vertex operator algebra is called $C_2$-cofinite if $C_2(V)$ has
finite codimension where $C_2(V)=\<u_{-2}v|u,v\in V\>$ (\cite{Z}).

\begin{remark} If $V$ is a vertex operator algebra satisfying $C_{2}$-cofinite
property, $V$ has only finitely many irreducible admissible modules
up to isomorphism \cite{DLM3}, \cite{L}, \cite{Z}.
\end{remark}

\vskip 0.3cm
We now recall the  notion of intertwining operators and fusion
rules from [FHL].
\begin{defn}
Let $V$ be vertex operator algebra and $M^1$, $M^2$, $M^3$ be weak $V$-modules. An intertwining
operator $\mathcal {Y}( \cdot , z)$ of type $\left(\begin{tabular}{c}
$M^3$\\
$M^1$ $M^2$\\
\end{tabular}\right)$ is a linear map$$\mathcal
{Y}(\cdot, z): M^1\rightarrow Hom(M^2, M^3)\{z\}$$ $$v^1\mapsto
\mathcal {Y}(v^1, z) = \sum_{n\in \mathbb{C}}{v_n^1z^{-n-1}}$$
satisfying the following conditions:

(1) For any $v^1\in M^1, v^2\in M^2$and $\lambda \in \mathbb{C},
v_{n+\lambda}^1v^2 = 0$ for $n\in \mathbb{Z}$ sufficiently large.

(2) For any $a \in V, v^1\in M^1$,
$$z_0^{-1}\delta(\frac{z_1-z_2}{z_0})Y_{M^3}(a, z_1)\mathcal
{Y}(v^1, z_2)-z_0^{-1}\delta(\frac{z_1-z_2}{-z_0})\mathcal{Y}(v^1,
z_2)Y_{M^2}(a, z_1)$$
$$=z_2^{-1}\delta(\frac{z_1-z_0}{z_2})\mathcal{Y}(Y_{M^1}(a, z_0)v^1, z_2).$$

(3) For $v^1\in M^1$, $\dfrac{d}{dz}\mathcal{Y}(v^1,
z)=\mathcal{Y}(L(-1)v^1, z)$.
\end{defn}
All of the intertwining operators of type $\left(\begin{tabular}{c}
$M^3$\\
$M^1$ $M^2$\\
\end{tabular}\right)$ form a vector space denoted by $I_V\left(\begin{tabular}{c}
$M^3$\\
$M^1$ $M^2$\\
\end{tabular}\right)$. The dimension of $I_V\left(\begin{tabular}{c}
$M^3$\\
$M^1$ $M^2$\\
\end{tabular}\right)$ is called the
fusion rule of type $\left(\begin{tabular}{c}
$M^3$\\
$M^1$ $M^2$\\
\end{tabular}\right)$ for $V$.

We now have the following result which was essentially proved in
[ADL].

\begin{theorem}\label{n2.2c}
Let $V^1, V^2$ be rational vertex operator algebras. Let $M^1 , M^2,
M^3$ be $V^1$-modules and  $N^1, N^2, N^3$ be $V^2$-modules such
that
$$dim I_{V^1}\left(\begin{tabular}{c}
$M^3$\\
$M^1$ $M^2$\\
\end{tabular}\right)< \infty , \ dim I_{V^2}\left(\begin{tabular}{c}
$N^3$\\
$N^1$ $N^2$\\
\end{tabular}\right)< \infty.$$
Then the linear map
$$\sigma: I_{V^1}\left(\begin{tabular}{c}
$M^3$\\
$M^1$ $M^2$\\
\end{tabular}\right)\otimes I_{V^2}\left(\begin{tabular}{c}
$N^3$\\
$N^1$ $N^2$\\
\end{tabular}\right)\rightarrow I_{V^1\otimes V^2}\left( \begin{tabular}{c}
$M^3\otimes N^3$\\
$M^1\otimes N^1$ $M^2\otimes N^2$\\
\end{tabular}\right)$$
$$\mathcal{Y}_1( \cdot, z)\otimes \mathcal{Y}_2(\cdot, z)\mapsto (\mathcal{Y}_1\otimes
\mathcal{Y}_2)(\cdot, z)$$\\is an isomorphism, where
$(\mathcal{Y}_1\otimes \mathcal{Y}_2)(\cdot, z)$ is defined by
$$(\mathcal{Y}_1\otimes \mathcal{Y}_2)(\cdot, z)(u^1\otimes v^1,z)(u^2\otimes v^2)= \mathcal{Y}_1(u^1,z)u^2\otimes \mathcal{Y}_2(v^1,z)v^2.$$
\end{theorem}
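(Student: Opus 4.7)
The plan is to verify in turn that $\sigma$ is well-defined, injective, and surjective.

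\textbf{Well-definedness.} The $L(-1)$-operator on $V^1 \otimes V^2$ decomposes as $L^1(-1) \otimes 1 + 1 \otimes L^2(-1)$ and the vacuum is $\mathbf{1} \otimes \mathbf{1}$, so the vacuum axiom and the $L(-1)$-derivative property for $(\mathcal{Y}_1 \otimes \mathcal{Y}_2)(\cdot,z)$ follow immediately from those of $\mathcal{Y}_1$ and $\mathcal{Y}_2$. For the Jacobi identity, by linearity it suffices to check the identity on elements of the form $a \otimes \mathbf{1}$ and $\mathbf{1} \otimes b$; in each such case the module vertex operator factors as a vertex operator in one tensor slot and the identity in the other, reducing the Jacobi identity for $(\mathcal{Y}_1 \otimes \mathcal{Y}_2)$ to the Jacobi identity for $\mathcal{Y}_1$ or $\mathcal{Y}_2$ separately.

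\textbf{Injectivity.} Pick bases $\{\mathcal{Y}_1^{(i)}\}$ and $\{\mathcal{Y}_2^{(j)}\}$ of the two finite-dimensional spaces, and suppose $\sum_{i,j} c_{ij}(\mathcal{Y}_1^{(i)} \otimes \mathcal{Y}_2^{(j)}) = 0$. Evaluating on a pure tensor $u^1 \otimes v^1$ applied to $u^2 \otimes v^2$ yields the identity $\sum_{i,j} c_{ij}\,\mathcal{Y}_1^{(i)}(u^1,z)u^2 \otimes \mathcal{Y}_2^{(j)}(v^1,z)v^2 = 0$ in $(M^3 \otimes N^3)\{z\}$. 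Pairing with a linear functional on the second tensorand and invoking linear independence of $\{\mathcal{Y}_1^{(i)}\}$, followed by that of $\{\mathcal{Y}_2^{(j)}\}$, forces all $c_{ij} = 0$.

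\textbf{Surjectivity, the main obstacle.} Given $\mathcal{Y}$ of the target type, the key observation is that the Jacobi identity for $\mathcal{Y}$ restricted to elements $a \otimes \mathbf{1}$ with $a \in V^1$ produces, for each fixed $v^1 \in N^1$, $v^2 \in N^2$, and each linear functional $\phi$ on $N^3$, a $V^1$-intertwining operator $\tilde{\mathcal{Y}}_{v^1,v^2,\phi}$ of type $\binom{M^3}{M^1\ M^2}$ via the assignment $u^1 \mapsto (\mathrm{id}_{M^3} \otimes \phi)\,\mathcal{Y}(u^1 \otimes v^1,z)(\,\cdot\,\otimes v^2)$, and the dual statement holds for the $V^2$-slot. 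Expanding $\tilde{\mathcal{Y}}_{v^1,v^2,\phi} = \sum_i c_i(v^1,v^2,\phi)\,\mathcal{Y}_1^{(i)}$ in a chosen basis, one checks that the multilinear coefficients $c_i$, regarded now as a function of $v^1,v^2,\phi$, constitute a $V^2$-intertwining operator valued in $N^3$; this produces the required expression $\mathcal{Y} = \sum \mathcal{Y}_1^{(i)} \otimes \mathcal{Y}_2^{(j)}$ in the image of $\sigma$. The chief technical difficulty is to coordinate the decompositions in the two slots so that the fractional $z$-powers carried by $\mathcal{Y}$ match those of the tensor-product operators. Finiteness of both fusion spaces and rationality of $V^1$, $V^2$ (which guarantees, via Theorem~\ref{tt2.1}, that one can reduce to irreducible modules and work with finitely many conformal weights) make the reconstruction well-defined; alternatively the surjectivity can be established by a dimension count through the Frenkel--Zhu description of fusion rules combined with the tensor decompositions $A(V^1 \otimes V^2) \cong A(V^1) \otimes A(V^2)$ and $A(M \otimes N) \cong A(M) \otimes A(N)$ of Zhu algebras and their bimodules.
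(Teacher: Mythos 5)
First, a point of comparison: the paper does not prove this statement at all — it is quoted as "essentially proved in [ADL]" — so your attempt has to stand on its own, and as written it does not. The two easy directions need repair but are fixable: for well-definedness, "by linearity" is not a valid reduction, since $V^1\otimes V^2$ is not spanned by elements $a\otimes\mathbf{1}$ and $\mathbf{1}\otimes b$; you need the standard fact that the Jacobi identity propagates from $u$ and $v$ to $u_nv$, together with $a\otimes b=(a\otimes\mathbf{1})_{-1}(\mathbf{1}\otimes b)$. For injectivity, pairing with an arbitrary functional $\phi$ on $N^3$ produces coefficients $\phi(\mathcal{Y}_2^{(j)}(v^1,z)v^2)$ that are formal series in $z$, not scalars, so linear independence of the $\mathcal{Y}_1^{(i)}$ over $\C$ cannot be invoked directly; one must take $v^1,v^2,\phi$ homogeneous (so that only one power of $z$ survives, the same for all $j$) and then conclude that $\sum_j c_{ij}\mathcal{Y}_2^{(j)}$ has all matrix coefficients zero.

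The genuine gap is in surjectivity, which is the whole content of the theorem. Your slice operator $\tilde{\mathcal{Y}}_{v^1,v^2,\phi}(u^1,z)=(\mathrm{id}_{M^3}\otimes\phi)\,\mathcal{Y}(u^1\otimes v^1,z)(\,\cdot\,\otimes v^2)$ does satisfy the $V^1$-Jacobi identity (apply $\mathcal{Y}$'s Jacobi identity to $a\otimes\mathbf{1}$), but it fails the $L(-1)$-derivative axiom: since only the total $L(-1)\otimes 1+1\otimes L(-1)$ acts as $\frac{d}{dz}$ on $\mathcal{Y}$, differentiating produces the extra term involving $u^1\otimes L(-1)v^1$. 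Consequently $\tilde{\mathcal{Y}}_{v^1,v^2,\phi}$ is in general \emph{not} an element of $I_{V^1}\binom{M^3}{M^1\,M^2}$ (Jacobi-plus-truncation alone defines a strictly larger space, stable for instance under multiplication by $z^c$), and the expansion $\tilde{\mathcal{Y}}_{v^1,v^2,\phi}=\sum_i c_i(v^1,v^2,\phi)\mathcal{Y}_1^{(i)}$ with scalar coefficients — the step on which your reconstruction rests — is simply not available; the true coefficients are $z$-dependent series. Showing that these series assemble into honest $V^2$-intertwining operators, i.e. controlling which fractional powers of $z$ occur by reducing to irreducible modules via rationality and using the $L(0)$-grading, is precisely the hard part of the argument in [ADL], and your write-up concedes it ("the chief technical difficulty") without carrying it out; "finiteness and rationality make the reconstruction well-defined" is an assertion, not an argument. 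The alternative dimension count is likewise only a pointer: it silently requires the rationality and module classification of $V^1\otimes V^2$, reduction to irreducible summands, the bimodule isomorphism $A(M\otimes N)\cong A(M)\otimes A(N)$, and the Frenkel--Zhu/Li theorem that the $A(V)$-bimodule computation gives the exact fusion rule rather than an upper bound — inputs whose verification is comparable in weight to the theorem itself.
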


For $m,r,s\in\Z_{+}$ such that $1\leq r\leq m+1$ and $1\leq s\leq m+2$,  let
$$c_{m}=1-\frac{6}{(m+2)(m+3)}$$ and $$h^{(m)}_{r,s}=\frac{[r(m+3)-s(m+2)]^2-1}{4(m+2)(m+3)}.$$

Recall that $L(c_{m},0)$, $m=1,2,\cdots$ are rational (see \cite{W}, \cite{DMZ} and \cite{FF}) and $L(c_{m}, h^{(m)}_{r,s})$ $1\leq r\leq m+1, 1\leq s\leq m+2$ are exactly all the irreducible modules of $L(c_{m},0)$. The fusion rules for these modules are given in \cite{W} (see also \cite{DMZ}, \cite{FF}).

The following lemma can be easily checked  (see \cite{LS}).
\begin{lem}\label{lam-s1}
For $m\in\Z_{+}$, we have

(1) \ $h^{(m)}_{r,s}=h^{(m)}_{m+2-r,m+3-s}$;

(2) \ For any $(p,q)\neq (r,s)$ and $(p,q)\neq (m+2-r,m+3-s)$, we have
$$
h^{(m)}_{r,s}\neq h^{(m)}_{p,q};
$$

(3) \ For all $(r,s)\neq (1,m+2)$ and $(r,s)\neq (m+1,1)$,
$$
h^{(m)}_{r,s}<h^{(m)}_{1,m+2}.$$
\end{lem}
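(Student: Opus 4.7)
The plan is to work directly with the closed-form expression
\[
h^{(m)}_{r,s}=\frac{[r(m+3)-s(m+2)]^2-1}{4(m+2)(m+3)}
\]
and reduce everything to statements about the integer quantity $f(r,s):=r(m+3)-s(m+2)$. Since $h^{(m)}_{r,s}$ depends on $f(r,s)$ only through its square, all three claims become elementary arithmetic assertions about $f$ on the lattice rectangle $1\le r\le m+1$, $1\le s\le m+2$.

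For part (1), I would substitute $r\mapsto m+2-r$ and $s\mapsto m+3-s$ into $f$: a direct one-line computation gives $f(m+2-r,m+3-s)=-f(r,s)$, so the two points yield equal values of $f^2$ and hence equal conformal weights.

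For part (2), the equality $h^{(m)}_{r,s}=h^{(m)}_{p,q}$ is equivalent to $f(r,s)=\pm f(p,q)$. The plus case rearranges to $(r-p)(m+3)=(s-q)(m+2)$, and the minus case to $(r+p)(m+3)=(s+q)(m+2)$. Because $\gcd(m+2,m+3)=1$, in the plus case $m+2$ must divide $r-p$; since $|r-p|\le m$, this forces $r=p$ and then $s=q$. In the minus case $m+2$ must divide $r+p$, and since $2\le r+p\le 2m+2$ the only possibility is $r+p=m+2$, which then forces $s+q=m+3$. This is exactly the symmetry appearing in part (1), so the two pairs listed in (2) are the only coincidences.

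For part (3), I would use that $f$ is strictly increasing in $r$ and strictly decreasing in $s$, so on the rectangle the minimum value of $f$ is attained at $(1,m+2)$ and the maximum at $(m+1,1)$, with values $-(m^{2}+3m+1)$ and $m^{2}+3m+1$ respectively. Uniqueness of these extrema is immediate: moving one step in any allowed direction changes $f$ by either $m+3$ or $m+2$, both positive, so $|f|$ strictly decreases. Therefore for every $(r,s)$ outside $\{(1,m+2),(m+1,1)\}$ we have $f(r,s)^{2}<(m^{2}+3m+1)^{2}$, giving the desired strict inequality $h^{(m)}_{r,s}<h^{(m)}_{1,m+2}$. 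I do not anticipate any real obstacle; the lemma is a purely numerical check, and introducing $f$ makes all three parts transparent.
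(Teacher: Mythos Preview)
Your proposal is correct and is exactly the kind of direct verification the paper has in mind: the paper offers no proof beyond ``The following lemma can be easily checked (see \cite{LS})'', and your reduction to the linear form $f(r,s)=r(m+3)-s(m+2)$ together with the coprimality of $m+2$ and $m+3$ is the natural way to carry out that check. There is nothing to add.
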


Let $m\in\Z_{+}$. An ordered triple of pairs of integers $((r,s), (r',s'), (r'',s''))$ is called {\em admissible}, if $1\leq r,r',r''\leq m+1$, $1\leq s,s',s''\leq m+2$, $ r+r'+r''\leq 2m+3$, $s+s'+s''\leq 2m+5$, $r<r'+r'', r'<r+r'', r''<r+r', s<s'+s'', s'<s+s'', s''<s+s'$, and both $r+r'+r''$ and $s+s'+s''$ are odd. The following result comes from \cite{W} ( for $c_1=\frac{1}{2}$, also see \cite{DMZ}).
\begin{theorem}\label{wang}
Let $m\in\Z_{+}$. Denote
$$N_{(r,s),(r',s')}^{(r'',s'')}=dim I_{L(c_{m},0)}\left(\begin{tabular}{c}
$L(c_{m},h^{(m)}_{r'',s''})$\\
$L(c_m, h^{(m)}_{r,s})$ $L(c_m, h^{(m)}_{r',s'})$\\
\end{tabular}\right).$$
Then $N_{(r,s),(r',s')}^{(r'',s'')}=1$  if and only if $((r,s), (r',s'), (r'',s''))$ is admissible; otherwise, $N_{(r,s),(r',s')}^{(r'',s'')}$ $=0$. In particular,
$$
dim I_{L(\frac{1}{2},0)}\left(\begin{tabular}{c}
$L(\frac{1}{2},0)$\\
$L(\frac{1}{2},\frac{1}{2})$ $L(\frac{1}{2},\frac{1}{2})$\\
\end{tabular}\right)=1, \  \  I_{L(\frac{1}{2},0)}\left(\begin{tabular}{c}
$L(\frac{1}{2},a)$\\
$L(\frac{1}{2},\frac{1}{2})$ $L(\frac{1}{2},\frac{1}{2})$\\
\end{tabular}\right)=0, \ a=\frac{1}{2}, \ \frac{1}{16},$$
$$
dim I_{L(\frac{1}{2},0)}\left(\begin{tabular}{c}
$L(\frac{1}{2},\frac{1}{16})$\\
$L(\frac{1}{2},\frac{1}{2})$ $L(\frac{1}{2},\frac{1}{16})$\\
\end{tabular}\right)=1, \  \  I_{L(\frac{1}{2},0)}\left(\begin{tabular}{c}
$L(\frac{1}{2},a)$\\
$L(\frac{1}{2},\frac{1}{2})$ $L(\frac{1}{2},\frac{1}{16})$\\
\end{tabular}\right)=0, \ a=0, \ \frac{1}{2}.$$

\end{theorem}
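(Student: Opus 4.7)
The plan is to apply the Frenkel--Zhu theorem, which (given rationality of $L(c_{m},0)$) expresses the fusion rule $N_{(r,s),(r',s')}^{(r'',s'')}$ as
$$\dim \Hom_{A(L(c_{m},0))}\bigl(A(L(c_{m},h^{(m)}_{r,s})) \otimes_{A(L(c_{m},0))} L(c_{m},h^{(m)}_{r',s'})(0),\ L(c_{m},h^{(m)}_{r'',s''})(0)\bigr),$$
so the entire theorem reduces to computing the Zhu algebra $A(L(c_{m},0))$, then each Frenkel--Zhu bimodule $A(L(c_{m},h^{(m)}_{r,s}))$, and finally identifying the admissibility conditions with the resulting polynomial pairing constraints.

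First I would establish that $A(L(c_{m},0))$ is a commutative algebra, presented as a polynomial ring in the image of $\omega$ modulo the relations coming from the two Feigin--Fuchs singular vectors of the Verma module $V(c_{m},0)$. These relations factor into linear factors whose roots are exactly the $h^{(m)}_{r,s}$, which recovers the classification of irreducibles and shows $A(L(c_{m},0))$ is semisimple; in particular each top piece $L(c_{m},h^{(m)}_{r'',s''})(0)$ is a one-dimensional irreducible $A(L(c_{m},0))$-module on which $[\omega]$ acts by $h^{(m)}_{r'',s''}$.

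The second step is to compute each bimodule $A(L(c_{m},h^{(m)}_{r,s}))$ by writing the irreducible as a quotient of the Verma module $V(c_{m},h^{(m)}_{r,s})$ by the submodule generated by its two singular vectors and reading these relations inside the bimodule. Substituting into the Frenkel--Zhu formula turns the singular vector relations into polynomial identities relating the joint left- and right-action of $[\omega]$, i.e.\ identities on $(h^{(m)}_{r,s},h^{(m)}_{r',s'},h^{(m)}_{r'',s''})$. Unpacking these, by the same Kac determinant combinatorics used to locate the $h^{(m)}_{r,s}$ in the first place, yields exactly the triangle inequalities $|r-r'|<r''<r+r'$ and $|s-s'|<s''<s+s'$, the parity conditions on $r+r'+r''$ and $s+s'+s''$, and the reflection bounds $r+r'+r''\leq 2m+3$, $s+s'+s''\leq 2m+5$. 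This yields the bound $N\leq 1$, with equality possible only on admissible triples.

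For the lower bound I would invoke the Dotsenko--Fateev Coulomb-gas construction: embed $L(c_{m},0)$ into a rank-one Heisenberg vertex operator algebra with a background charge, realize the irreducibles as Felder cohomologies, and produce nonzero intertwiners as products of Heisenberg vertex operators dressed with the correct number of screening integrals; admissibility is precisely the charge-conservation and BRST-invariance condition making this operator survive. The $c_{1}=1/2$ case listed at the end of the statement can alternatively be checked by hand using the identification of $L(1/2,0)$-modules with the three Ising sectors. The principal obstacle is step two: translating the pair of Feigin--Fuchs singular vector relations on the bimodule $A(L(c_{m},h^{(m)}_{r,s}))$ into the symmetric BPZ admissibility form is combinatorially intricate for general $m$, and is typically carried out either by induction on $m$ using the embedding structure of degenerate Virasoro modules, or by directly matching against the free field calculations of the lower-bound step.
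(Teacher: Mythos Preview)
The paper does not prove this theorem at all: it is stated with the attribution ``The following result comes from \cite{W} (for $c_1=\frac{1}{2}$, also see \cite{DMZ})'' and no argument is given. So there is no in-paper proof to compare your proposal against.

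That said, your outline is essentially the strategy of the cited references. Wang's argument in \cite{W} proceeds exactly by computing the Zhu algebra $A(L(c_m,0))$ as a quotient of $\C[x]$ (using the singular vector of $V(c_m,0)$), then computing the Frenkel--Zhu bimodules $A(L(c_m,h^{(m)}_{r,s}))$ from the singular vectors of the corresponding Verma modules, and reading off the fusion rules via the Frenkel--Zhu formula. The $c_1=\frac{1}{2}$ case is done directly in \cite{DMZ}. One remark: in Wang's treatment the upper and lower bounds both come out of the bimodule computation itself (the tensor product over $A(L(c_m,0))$ is already one-dimensional when the triple is admissible), so a separate Coulomb-gas construction for existence is not strictly needed, though it is a legitimate alternative. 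Your caveat about the combinatorics in step two is accurate; that is where the work in \cite{W} lies.
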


\begin{cor}\label{wang-lam-co}
For $n\geq 1$, $0\leq 2k,2l,2p\leq n$, $0\leq 2l_1,2l_2\leq n+1$ such that $l_1\neq l_2$,
$$I_{L(c_{n},0)}\left(\begin{tabular}{c}
$L(c_{n},h^{(n)}_{2p+1,2l_1+1})$\\
$L(c_n, h^{(n)}_{2k+1,1})$ $L(c_n, h^{(n)}_{2l+1,2l_2+1})$\\
\end{tabular}\right)=0.$$

\end{cor}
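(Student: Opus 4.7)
The plan is to apply Theorem~\ref{wang} directly: the fusion rule vanishes unless the ordered triple of pairs
$$\bigl((2k+1,1),\ (2l+1,2l_2+1),\ (2p+1,2l_1+1)\bigr)$$
is admissible in the sense defined just above the theorem. So I would simply check which of the admissibility conditions must fail when $l_1\neq l_2$.

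First I would verify the easy conditions. The parity condition on the first coordinates is immediate: $(2k+1)+(2l+1)+(2p+1)=2(k+l+p)+3$ is odd; similarly $1+(2l_2+1)+(2l_1+1)=2(l_1+l_2)+3$ is odd. The range conditions $1\leq 2k+1,2l+1,2p+1\leq n+1$ and $1\leq 1, 2l_2+1, 2l_1+1\leq n+2$ follow from the stated ranges on $k,l,p,l_1,l_2$. So the only way admissibility can fail is via one of the triangle inequalities.

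The key observation is the triangle inequalities on the second coordinates, with $s=1$, $s'=2l_2+1$, $s''=2l_1+1$. The conditions $s'<s+s''$ and $s''<s+s'$ become
$$2l_2+1<1+(2l_1+1)\quad\text{and}\quad 2l_1+1<1+(2l_2+1),$$
i.e.\ $l_2\leq l_1$ and $l_1\leq l_2$. Hence admissibility of the triple forces $l_1=l_2$. Since we are assuming $l_1\neq l_2$, the triple is not admissible, and Theorem~\ref{wang} then gives $N^{(2p+1,2l_1+1)}_{(2k+1,1),(2l+1,2l_2+1)}=0$, which is exactly the vanishing claimed in the corollary.

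There is essentially no obstacle here: the corollary is a bookkeeping consequence of Wang's admissibility criterion, singling out the effect of having the second index equal to $1$ in one of the three modules. The only thing that requires mild care is making sure the ranges of $2k+1, 2l+1, 2p+1$ and $2l_1+1, 2l_2+1$ lie within the allowed bounds for Virasoro minimal model labels at central charge $c_n$, which they do by the hypotheses $0\leq 2k,2l,2p\leq n$ and $0\leq 2l_1,2l_2\leq n+1$.
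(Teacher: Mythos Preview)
Your proof is correct and follows the same approach as the paper's: both argue that the triple $((2k+1,1),(2l+1,2l_2+1),(2p+1,2l_1+1))$ fails the admissibility criterion of Theorem~\ref{wang}, and you make explicit the key step that the second-coordinate triangle inequalities $s'<s+s''$ and $s''<s+s'$ with $s=1$ force $l_1=l_2$. The paper's one-line proof additionally cites Lemma~\ref{lam-s1}(1) (the Kac-label symmetry $h^{(m)}_{r,s}=h^{(m)}_{m+2-r,m+3-s}$), presumably to cover the alternative labelings of the same modules, but since Theorem~\ref{wang} is stated for fixed labels and your chosen labels already fail admissibility, your argument is complete as written.
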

\pf Since $l_1\neq l_2$, by the definition of admissible triple of pairs and (1) of Lemma \ref{lam-s1}, $((2k+1,1),(2l+1,2l_2+1),(2p+1,2l_1+1))$ can not be  an admissible triple. Then the corollary follows from Theorem \ref{wang}.\qed

\vskip 0.3cm

Now let $V$ be a vertex operator algebra and $M^1, M^2$ be weak $V$-modules, we call a weak $V$-module
$M$ an extension of $M^2$ by $M^1$ if there is a short exact
sequence
$$0\rightarrow M^1\rightarrow M \rightarrow M^2\rightarrow 0.$$
Then we could define the equivalence of two extensions, and then
define the extension group $Ext_V^1(M^2, M^1)$. Recall that $V$ is
called $C_2$-cofinite if the subspace $C_2(V)$ of $V$ spanned by
$u_{-2}v$ for $u,v\in V$ has finite codimension. We have the following result from \cite{A}.

\begin{theorem}\label{n2.2}
Let $V$ be a $C_2$-cofinite vertex operator algebra, then $V$ is
rational if and only if $Ext_V^1(N, M) = 0$ for any pair of
irreducible $V$-modules $M$ and $N$.
\end{theorem}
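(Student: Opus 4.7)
The plan is to treat the two directions of the biconditional separately, with the ``if'' direction being the nontrivial one. For the forward direction, assume $V$ is rational, and let $M,N$ be irreducible $V$-modules; by Theorem~\ref{tt2.1} both are ordinary, hence admissible. Given any short exact sequence $0\to M\to E\to N\to 0$ of weak $V$-modules, I would first equip $E$ with a $\Z_+$-grading compatible with those of $M$ and $N$, so that $E$ becomes admissible. Rationality of $V$ then forces $E$ to be a direct sum of irreducible admissible modules, whence the sequence splits and ${\rm Ext}_V^1(N,M)=0$.

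For the converse, assume ${\rm Ext}_V^1(N,M)=0$ for every pair of irreducible $V$-modules; the goal is to show that every admissible $V$-module $W$ is completely reducible. Step one is to exploit $C_2$-cofiniteness to establish that every finitely generated admissible $V$-module has finite composition length, combining the finiteness of the set of irreducible admissible modules (noted earlier as a consequence of $C_2$-cofiniteness) with standard structural results on $C_2$-cofinite vertex operator algebras. Step two is an induction on composition length: for a finite-length admissible module $W$ of length $n$ pick an irreducible submodule $M_1\subset W$; the quotient $W/M_1$ has length $n-1$ and is semisimple by induction, say $W/M_1\cong\bigoplus_j N_j$ with each $N_j$ irreducible. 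The extension $0\to M_1\to W\to \bigoplus_j N_j\to 0$ then represents a class in ${\rm Ext}^1_V(\bigoplus_j N_j, M_1)\cong \bigoplus_j {\rm Ext}^1_V(N_j,M_1)=0$, so it splits and $W$ is semisimple. Step three is to observe that an arbitrary admissible module is the directed union of its finitely generated admissible submodules, each of which is semisimple by step two; a union of semisimple submodules being semisimple, $W$ is completely reducible, proving rationality.

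The main obstacle is step one: converting $C_2$-cofiniteness into the finite-length property for finitely generated admissible modules is not formal and relies on nontrivial structural results about $C_2$-cofinite vertex operator algebras (of Miyamoto, Dong--Li--Mason type). A secondary subtlety appears in the forward direction, where one must verify that an extension of two admissible modules admits a compatible admissible grading; this is largely routine but has to be checked carefully against the precise definition of ${\rm Ext}^1$ in the weak-module category.
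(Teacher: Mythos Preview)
The paper does not prove this theorem at all: it is quoted as a result from \cite{A} (the sentence immediately preceding the statement reads ``We have the following result from \cite{A}''), and no argument is supplied. So there is no proof in the paper against which to compare your proposal.

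That said, your outline is the standard one and is essentially what is carried out in \cite{A}. You have correctly located the only substantive difficulty: your step one, passing from $C_2$-cofiniteness to the statement that every finitely generated admissible module has finite composition length, is precisely the nontrivial structural input, and it does require the results of Dong--Li--Mason/Miyamoto type that you mention (finite-dimensionality of the higher Zhu algebras $A_n(V)$, finiteness of the irreducibles, etc.). Once that is granted, your induction on length together with the additivity ${\rm Ext}^1_V(\bigoplus_j N_j, M_1)\cong \bigoplus_j {\rm Ext}^1_V(N_j,M_1)$ is routine, and the passage to arbitrary admissible modules via directed unions is standard. For the forward direction your plan is also fine: under the ambient $C_2$-cofiniteness hypothesis the extension $E$ decomposes into generalized $L(0)$-eigenspaces, which furnishes the required admissible grading, after which rationality forces the splitting.
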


We will also need the following lemma which was proved in \cite{DJL}.
\begin{lem}\label{la1}
Let $V$ be a vertex operator algebra and $U$ a rational vertex operator subalgebra
of $V$ with the same Virasoro element. Let $M^1,M^2$ be irreducible $V$-modules.
Assume that
$$I_{U}\left(\begin{tabular}{c}
$N^1$\\
$N$\  $N^2$
\end{tabular}\right)=0$$
for any irreducible $U$-submodules $N^1,N,N^2$ of $M^1,V,M^2,$ respectively.
Then $$Ext_V^1(M^2, M^1) = 0.$$
\end{lem}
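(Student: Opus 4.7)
The plan is to start from an arbitrary short exact sequence $0\to M^1\to M\to M^2\to 0$ of $V$-modules and show that it splits. Pick any $\C$-linear section $s\colon M^2\to M$ of the projection $\pi\colon M\to M^2$, and define the defect
$$\psi(v,z)w\ =\ Y_M(v,z)s(w)-s\bigl(Y_{M^2}(v,z)w\bigr)\qquad (v\in V,\ w\in M^2).$$
Since $\pi\circ s=\mathrm{id}_{M^2}$ and $\pi$ intertwines the $V$-action, $\pi(\psi(v,z)w)=0$, so $\psi(v,z)w\in M^1((z))$. A direct calculation, using the Jacobi identity for $Y_M$ on $s(w)$ and the fact that the embedding $M^1\hookrightarrow M$ is a $V$-module map (so $Y_M|_{M^1}=Y_{M^1}$), shows that $\psi$ is an intertwining operator of type $\left(\begin{tabular}{c}$M^1$\\ $V$\ $M^2$\end{tabular}\right)$ over $V$. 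Moreover, $\psi\equiv 0$ is precisely the statement that $s$ is $V$-equivariant, i.e.\ that the extension splits, so it suffices to prove $\psi=0$.

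Next, reinterpret $\psi$ through the subalgebra $U$. Because $U\subset V$ shares the same Virasoro vector, the Jacobi identity and $L(-1)$-derivative axiom for $\psi$ specialize from $a\in V$ to $a\in U$ without change, so $\psi$ also lies in $I_U\left(\begin{tabular}{c}$M^1$\\ $V$\ $M^2$\end{tabular}\right)$, where $V$, $M^1$, $M^2$ are regarded as ordinary $U$-modules by restriction of the action. Rationality of $U$ together with Theorem~\ref{tt2.1} gives $U$-module decompositions $V=\bigoplus_{k}N_k$, $M^1=\bigoplus_{i}N^1_i$ and $M^2=\bigoplus_{j}N^2_j$ into irreducible $U$-modules, with each weight space finite-dimensional. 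For every triple $(i,j,k)$, precomposing $\psi$ with the inclusions $N_k\hookrightarrow V$ and $N^2_j\hookrightarrow M^2$ and postcomposing with the projection $p_i\colon M^1\twoheadrightarrow N^1_i$ produces an element of $I_U\left(\begin{tabular}{c}$N^1_i$\\ $N_k$\ $N^2_j$\end{tabular}\right)$; this is a genuine $U$-intertwiner because the projections $p_i$ are $U$-linear and so commute with $Y_{M^1}$. By hypothesis this space vanishes.

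Since any vector in $M^1$ is determined by its images under all the $p_i$, and any $v\in V$, $w\in M^2$ decompose (on each weight level) into finite sums of components from the $N_k$, $N^2_j$, the simultaneous vanishing of all such components forces $\psi=0$, yielding the $V$-module splitting and hence $\mathrm{Ext}^1_V(M^2,M^1)=0$. The step I expect to be most technical is the verification that the defect $\psi$ satisfies the full intertwining-operator Jacobi identity over $V$: one must carefully commute $s$ past the two-variable vertex operators and repeatedly use that $Y_M(a,z_1)$ applied to something in $M^1((z_2))$ agrees with $Y_{M^1}(a,z_1)$. Once this cocycle calculation is done, the remainder is a bookkeeping argument enabled by rationality of $U$ and the vanishing hypothesis on the fusion rules.
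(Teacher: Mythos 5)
The central step of your plan is false: for an arbitrary linear section $s$, the defect $\psi(v,z)w=Y_M(v,z)s(w)-s(Y_{M^2}(v,z)w)$ is \emph{not} an intertwining operator of type $\left(\begin{tabular}{c}$M^1$\\ $V$\ \ $M^2$\end{tabular}\right)$, neither over $V$ nor over $U$. If you substitute $\psi$ into the intertwining-operator Jacobi identity with algebra element $a$ and first-slot vector $v$, and then use the Jacobi identities of $Y_M$ (on $s(w)$) and of $Y_{M^2}$ (on $w$), the terms do not cancel; what survives is exactly
\begin{equation*}
z_0^{-1}\delta\Big(\frac{z_2-z_1}{-z_0}\Big)Y_{M^1}(v,z_2)\,\psi(a,z_1)w\;-\;z_0^{-1}\delta\Big(\frac{z_1-z_2}{z_0}\Big)\psi(a,z_1)\,Y_{M^2}(v,z_2)w .
\end{equation*}
Taking $v={\bf 1}$ shows this obstruction vanishes for all $v,w$ only if $\psi(a,z)=0$, i.e.\ only if $s$ already intertwines the action of $a$; so requiring the Jacobi identity for all $a\in V$ presupposes that the extension is split, and requiring it for $a\in U$ presupposes that $s$ is a $U$-module map. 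There is also a structural sanity check: since $I_V\left(\begin{tabular}{c}$M^1$\\ $V$\ \ $M^2$\end{tabular}\right)\cong \Hom_V(M^2,M^1)$, your claim would force every extension between irreducible modules of an arbitrary vertex operator algebra to split, with no hypothesis on $U$ or on fusion rules at all — which is certainly not true (non-rational $C_2$-cofinite algebras have non-semisimple module categories).

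The missing idea is to use the rationality of $U$ \emph{before} defining the defect: because $U$ is rational (and $L(0)$ is the same for $U$ and $V$), the exact sequence splits as $U$-modules, so you may choose $s$ to be $U$-equivariant, i.e.\ $M=M^1\oplus s(M^2)$ as $U$-modules. With such an $s$ one has $\psi(a,z)=0$ for all $a\in U$, the obstruction above disappears for $a\in U$, and the projected operators $P_{N^1}Y_M(u,z)|_{N^2}$, for $u$ in an irreducible $U$-summand $N$ of $V$ and irreducible $U$-summands $N^1\subseteq M^1$, $N^2\subseteq s(M^2)$, are honest $U$-intertwiners of type $\left(\begin{tabular}{c}$N^1$\\ $N$\ \ $N^2$\end{tabular}\right)$ (the $L(-1)$-derivative property uses that $U$ and $V$ share the Virasoro element). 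The hypothesis then forces all these projections to vanish, hence $u_n s(M^2)\subseteq s(M^2)$ for all $u\in V$, so $s(M^2)$ is a $V$-submodule and the extension splits. This is precisely the mechanism the paper uses (the lemma itself is quoted from \cite{DJL}, but the same argument is written out in the proof of Lemma \ref{l6.1}): the $U$-splitting is chosen first, and it is the $U$-equivariance of that splitting — not a cocycle computation for an arbitrary section — that makes the projected vertex operators intertwiners. Your final bookkeeping with the decompositions of $V$, $M^1$, $M^2$ is fine once this correction is made.
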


\section{The Zhu algebra   of $M^{(n)}$}
\def\theequation{3.\arabic{equation}}
\setcounter{equation}{0}

From this section, we always assume that ${\frak g}=\frak{sl}_{2}(\C)$.  Let $\widehat{\frak g}$ be the corresponding affine Lie algebra. For $l\in\C$, let $L_{\widehat{\frak g}}(l,0)$ be the simple vertex operator algebra associated to the level $l$ irreducible highest weight representation of $\widehat{\frak g}$. Then $L_{\widehat{\frak g}}(1,0)\cong V_{A_{1}}$, where $V_{A_{1}}$ is the lattice vertex operator algebra associated to the lattice $A_{1}=\Z\al$ with $(\al,\al)=2$.

Let $A_{1}^{n+1}=\Z\alpha^{0}\oplus\Z\al^{1}\oplus\cdots \oplus\Z\al^n$ be the orthogonal sum of $n+1$ copies of $A_{1}$ and $V_{A_{1}^{n+1}}$ the lattice vertex operator algebra associated with the lattice $A_{1}^{n+1}$. Then as a vertex operator algebra, we have
\begin{eqnarray*}
&  V_{A_{1}^{n+1}}\cong V_{A_{1}}\otimes V_{A_{1}}\otimes\cdots\cdots\otimes V_{A_1}\\
& \cong L_{\wg}(1,0)\otimes L_{\wg}(1,0)\otimes\cdots\otimes L_{\wg}(1,0)\\
& =\oplus_{0\leq 2l\leq n+1}L_{\wg}(n+1,2l)\otimes M^{0^{n+1}}(2l).
\end{eqnarray*}
It is known that the commutator $M^{{0^{n+1}}}(0)$ of $L_{\wg}(n+1,0)$ in $V_{A_{1}^{n+1}}$ is a simple vertex operator algebra and $M^{0^{n+1}}(2l)$, $0\leq 2l\leq n+1$ are irreducible modules of $M^{0^{n+1}}(0)$. Furthermore, for $a=(a_{0},a_{1},\cdots,a_{n})\in\Z_{2}^{n+1}$ and $\gamma_{a}=\frac{1}{2}\sum\limits_{i=0}^{n}a_{i}\al^{i}$, we have
\begin{eqnarray*}
& V_{\gamma_{a}+A_{1}^{n+1}}\cong L_{\wg}(1,a_{0})\otimes L_{\wg}(1,a_{1})\otimes\cdots\otimes L_{\wg}(1,a_{n})\\
& = \oplus_{0\leq l\leq n+1, l-|\gamma_{a}|\in 2\Z}L_{\wg}(n+1,l)\otimes M^{{\gamma_{a}}}(l),
\end{eqnarray*} and  $M^{{\gamma_{a}}}(l)$ are also irreducible modules of $M^{0^{n+1}}(0)$, where $|\gamma_{a}|=\sum\limits_{i=0}^{n}a_{i}$.   For short denote
$$
M^{(n)}=M^{0^{n+1}}(0).
$$
Let
$$Q=span_{\Z}\{\al^0-\al^1,\al^1-\al^2,\cdots,\al^n-\al^{n+1}\},$$
and
$$
\Phi=\{\frac{\pm(\al^i-\al^j)}{\sqrt{2}}| 0\leq i<j\leq n\}.$$
Then $Q$ is isomorphic to the lattice $\sqrt{2}A_{n}$ and $\Phi$ is a root system  of type $A_{n}$. For $0\leq i,j\leq n$, $i\neq  j$,  let
$$
\omega^{ij}=\frac{1}{16}(\al^i-\al^j)(-1)(\al^i-\al^j)(-1){\bf 1}-\frac{1}{4}(e^{\al^i-\al^j}+e^{-\al^i+\al^j}).
$$

It is obvious that $\omega^{ij}=\omega^{ji}$. The following lemma was obtained in \cite{LS}.
\begin{lem}\label{l3.1} (1) For $0\leq i<j\leq n$, $\omega^{ij}$ are conformal vectors of central charge $\frac{1}{2}$ in $M^{(n)}$.

(2) The vertex operator algebra $M^{(n)}$ is generated by $\omega^{ij}$, $0\leq i<j\leq n$.

(3) $\omega=\frac{4}{n+3}\sum\limits_{0\leq i<j\leq n}\omega^{ij}$ is the Virasoro vector of $M^{(n)}$.

(4) $M^{(n)}$ is $C_{2}$-cofinite and simple.
\end{lem}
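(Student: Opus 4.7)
The plan is to establish parts (1) and (3) by direct calculation inside the lattice vertex operator algebra $V_{A_{1}^{n+1}}$, then deduce (2) from a known generation result, and finally obtain (4) from general coset-theoretic arguments.

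For part (1), fix $0\le i<j\le n$ and set $\beta=\alpha^{i}-\alpha^{j}$, so that $(\beta,\beta)=4$. Then $\omega^{ij}$ is the standard Ising conformal vector attached to the rank-one sublattice $\mathbb{Z}\beta\cong\sqrt{2}A_{1}$ of $V_{A_{1}^{n+1}}$. I would check directly that $Y(\omega^{ij},z)\omega^{ij}$ produces the Virasoro OPE with $c=\tfrac12$: this reduces to three contributions, namely the self-contraction of the Heisenberg piece $\tfrac{1}{16}\beta(-1)^{2}\mathbf{1}$, the cross term between this piece and $-\tfrac14(e^{\beta}+e^{-\beta})$, and the $e^{\beta}$--$e^{-\beta}$ OPE, whose leading coefficients (accounting for the $2$-cocycle) combine to give exactly $c=\tfrac12$. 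To confirm $\omega^{ij}\in M^{(n)}$, I would verify annihilation by the positive modes of the diagonal affine action; for the zero modes this is immediate because $\alpha^{i}-\alpha^{j}$ is orthogonal to $\alpha^{0}+\cdots+\alpha^{n}$, and for higher modes it follows from standard Heisenberg commutation relations together with the lattice translation formulas for $e^{\pm\beta}$.

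For part (3), I would compute the Virasoro vector of $M^{(n)}$ as the difference between the lattice Virasoro vector of $V_{A_{1}^{n+1}}$ and the Sugawara vector of $L_{\widehat{\frak g}}(n+1,0)$. Expanding the Sugawara vector in Heisenberg squares $\alpha^{i}(-1)^{2}\mathbf{1}$, cross terms $\alpha^{i}(-1)\alpha^{j}(-1)\mathbf{1}$, and lattice terms $e^{\pm(\alpha^{i}-\alpha^{j})}$, and using the identity
$$\sum_{i<j}(\alpha^{i}-\alpha^{j})(-1)^{2}\mathbf{1}=n\sum_{i}\alpha^{i}(-1)^{2}\mathbf{1}-2\sum_{i<j}\alpha^{i}(-1)\alpha^{j}(-1)\mathbf{1},$$
one rearranges the difference to read $\tfrac{4}{n+3}\sum_{i<j}\omega^{ij}$. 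The scalar $\tfrac{4}{n+3}$ is forced by matching the coefficient of $e^{\pm(\alpha^{i}-\alpha^{j})}$, which stems from the Sugawara denominator $2(n+3)$ for $\widehat{\frak{sl}}_{2}$ at level $n+1$.

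Part (2) is the main obstacle. My plan is first to describe $M^{(n)}_{2}$ explicitly inside $(V_{A_{1}^{n+1}})_{2}$ by intersecting with the joint kernel of the non-negative modes of the diagonal $\widehat{\frak g}$-action; decomposing the Heisenberg weight-two subspace under the diagonal $\frak{sl}_{2}$ and pairing it with the known combinations of $e^{\pm(\alpha^{i}-\alpha^{j})}$ should exhibit $\{\omega^{ij}\}_{i<j}$ as a basis of $M^{(n)}_{2}$. Then the Lam--Shimakura result of \cite{LS}, quoted in the introduction, that $M^{(n)}$ is generated as a vertex operator algebra by its weight-two subspace, upgrades a spanning set of $M^{(n)}_{2}$ into a generating set of the whole algebra. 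For part (4), simplicity of $M^{(n)}$ follows from the double-commutant principle applied to the isotypic decomposition $V_{A_{1}^{n+1}}=\bigoplus_{l}L_{\widehat{\frak g}}(n+1,2l)\otimes M^{0^{n+1}}(2l)$: distinct $L_{\widehat{\frak g}}(n+1,0)$-isotypic components force $L_{\widehat{\frak g}}(n+1,0)\otimes M^{(n)}$ to act with pairwise non-isomorphic irreducibles on these summands, leaving no room for a nontrivial ideal of $M^{(n)}$. For $C_{2}$-cofiniteness, one invokes Miyamoto's theorem that the commutant of a regular vertex operator subalgebra in a regular vertex operator algebra is itself $C_{2}$-cofinite, applied to the regular pair $L_{\widehat{\frak g}}(n+1,0)\subset V_{A_{1}^{n+1}}$.
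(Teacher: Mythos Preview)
The paper does not actually prove this lemma; it simply records it with the preface ``The following lemma was obtained in \cite{LS}.''  So there is no in-paper proof to compare against, and any argument you give is necessarily your own supplement to a cited result.  (Minor point: the reference \cite{LS} is Lam--Sakuma, not Lam--Shimakura.)

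Your treatments of (1), (3), and the first half of (2) are standard and correct: the Ising vectors $\omega^{ij}$ are the usual $c=\tfrac12$ conformal vectors attached to norm-$4$ lattice vectors, the coset Virasoro element is computed by subtracting the diagonal Sugawara element from the lattice Virasoro element, and the weight-two space of $M^{(n)}$ is indeed spanned by the $\omega^{ij}$.  Citing the Lam--Sakuma generation theorem for (2) is appropriate and is exactly what the paper does globally by invoking \cite{LS}.

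The gap is in your argument for $C_2$-cofiniteness in (4).  You appeal to ``Miyamoto's theorem that the commutant of a regular vertex operator subalgebra in a regular vertex operator algebra is itself $C_2$-cofinite.''  No such general theorem is available in the literature; $C_2$-cofiniteness of coset subalgebras is in general an open problem and is established case by case.  Miyamoto's results in this direction concern orbifolds (fixed-point subalgebras under finite group actions) under additional hypotheses, not arbitrary commutants.  In \cite{LS} the $C_2$-cofiniteness of $M^{(n)}$ is obtained by identifying it concretely inside $V_{\sqrt{2}A_n}$ and exploiting that ambient lattice structure together with the parafermion description, not by a black-box commutant theorem.  You should either reproduce that argument or cite \cite{LS} directly for this point; as written, the claimed theorem does not support your conclusion.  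Your simplicity argument via the multiplicity-free isotypic decomposition is fine.
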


As in \cite{LS}, for any $\delta=(\delta_{0},\cdots,\delta_{n-1})\in\Z_{2}^{n}$ and $0\leq k\leq n+1$, define
$$
\delta'=\left\{
\begin{array}{l}
 (\delta_{0},\cdots,\delta_{n-1},0), \ \ {\rm if} \ |\delta|-k\in 2\Z\\
(\delta_{0},\cdots,\delta_{n-1},1), \ \ {\rm if} \ |\delta|-k\in 2\Z+1
\end{array}
\right.
$$
and define
\begin{equation}\label{de}
M^{\delta}(k)=M^{\delta'}(k),
\end{equation}
where $M^{\delta'}(k)$ is defined as above.
Then we have the following lemma from \cite{LS}
\begin{lem}\label{lam-s2}
Let $\delta,\sigma\in\Z_{2}^{n}$, $1^n=(1,1,\cdots,1)\in\Z_{2}^n$ and $0\leq k,l\leq n+1$. Then $M^{\delta}(k)\cong M^{\sigma}(l)$ if and only if either $\delta=\sigma$ and $k=l$ or $l=n+1-k$ and $\sigma=\delta+1^{n}$.
\end{lem}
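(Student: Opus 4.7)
The plan is to prove the two directions separately. The if direction is constructive, and the only if direction is by comparing characters (or equivalently top weights together with simple current orbits) of the coset modules.

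For the if direction, the case $\delta=\sigma$, $k=l$ is tautological. In the non-trivial case $\sigma=\delta+1^n$, $l=n+1-k$, I would construct an explicit isomorphism using the simple current structure of the lattice vertex operator algebra $V_{A_1^{n+1}}$. Let $x=\gamma_{1^{n+1}}=\frac12\sum_{i=0}^{n}\alpha^i$. Since $L^*/L\cong\mathbb Z_2^{n+1}$, every $V_{\gamma_a+A_1^{n+1}}$ is a simple current, and the fusion
$$
V_{x+A_1^{n+1}}\boxtimes V_{\gamma_a+A_1^{n+1}}=V_{\gamma_{a+1^{n+1}}+A_1^{n+1}}
$$
shifts the coset label from $a$ to $a+1^{n+1}$. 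Under the diagonal $\widehat{\frak{sl}}_2$-embedding at level $n+1$, this same fusion restricts to fusion with the order-two simple current $L_{\wg}(n+1,n+1)$, sending $L_{\wg}(n+1,l)\mapsto L_{\wg}(n+1,n+1-l)$; the matching can be verified by comparing top conformal weights $(x,x)/2=(n+1)/4$ with the top weight of $L_{\wg}(n+1,n+1)$. The corresponding $M^{(n)}$-side simple current is trivial, because the top isotype of $V_{x+A_1^{n+1}}$ is $L_{\wg}(n+1,n+1)\otimes M^{1^{n+1}}(n+1)$ with $M^{1^{n+1}}(n+1)$ of top weight $0$, hence isomorphic to the vacuum module $M^{(n)}$. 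Comparing isotypic decompositions on both sides of the fusion identity yields $M^{\gamma_a}(k)\cong M^{\gamma_{a+1^{n+1}}}(n+1-k)$, which after translating via the definition (3.1) is exactly the claimed isomorphism.

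For the only if direction, suppose $M^{\delta}(k)\cong M^{\sigma}(l)$. Within a single $V_{\gamma_a+A_1^{n+1}}$, the decomposition into $L_{\wg}(n+1,m)\otimes M^a(m)$ is multiplicity-free, the individual $M^a(m)$ are irreducible with pairwise distinct branching-function characters (which are linearly independent by standard results for affine coset constructions), and they sit in non-isomorphic $\widehat{\frak{sl}}_2$-isotypes. So for fixed $a$, the modules $M^a(m)$ are pairwise non-isomorphic in $m$. Across different cosets, the lattice modules $V_{\gamma_a+A_1^{n+1}}$ and $V_{\gamma_b+A_1^{n+1}}$ are non-isomorphic $V_{A_1^{n+1}}$-modules, but their $M^{(n)}$-multiplicity spaces can still coincide; the possible coincidences are controlled by the order-two simple current orbit generated by $x$, since this is the only nontrivial simple current that acts trivially on the $M^{(n)}$-factor of the decomposition. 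Hence the only additional identification is $M^a(m)\cong M^{a+1^{n+1}}(n+1-m)$, produced in the first paragraph.

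The main obstacle is making the only if direction rigorous without circularly invoking rationality of $M^{(n)}$, which is only established later in the paper. My preferred route is to work entirely on the $V_{A_1^{n+1}}$-side, which is a well-understood rational lattice vertex operator algebra with a known modular tensor category of modules, and transport the identifications to $M^{(n)}$ via the commutant principle together with linear independence of the affine branching functions. This isolates the claim to the simple-current combinatorics of the pair $(V_{A_1^{n+1}},L_{\wg}(n+1,0))$ and avoids any unproved hypothesis on $M^{(n)}$.
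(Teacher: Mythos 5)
The paper itself offers no proof of this lemma: it is quoted verbatim from [LS] (Lam--Sakuma), so your argument has to stand on its own, and as written it has a real gap at its center. In both directions the decisive step is the transfer of a simple-current fusion identity for $V_{A_1^{n+1}}$ to an isomorphism (respectively, to the absence of further isomorphisms) between the multiplicity spaces $M^{\gamma_a}(k)$ over the commutant, and this transfer is asserted rather than proved. "Comparing isotypic decompositions on both sides of the fusion identity" presupposes that $\boxtimes_{V_{A_1^{n+1}}}$ interacts with the decomposition over the subalgebra $L_{\wg}(n+1,0)\otimes M^{(n)}$ in the expected way; the clean categorical justification (induction/restriction for the extension $L_{\wg}(n+1,0)\otimes M^{(n)}\subset V_{A_1^{n+1}}$, or coset duality) requires rationality of $M^{(n)}$, which is exactly the circularity you flag but do not actually escape. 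The non-circular route would be hands-on: restrict the lattice intertwining operators realizing $V_{x+A_1^{n+1}}\cdot V_{\gamma_a+A_1^{n+1}}\subset V_{\gamma_{a+1^{n+1}}+A_1^{n+1}}$ to isotypic components, apply the ADL factorization of intertwining operators for tensor-product VOAs, prove the relevant projection is nonzero, and separately prove $M^{1^{n+1}}(n+1)\cong M^{(n)}$ (your "top weight $0$, hence the vacuum module" needs an argument, e.g.\ unitarity forcing the lowest weight vector to be vacuum-like, or an explicit computation); none of these steps appears in the proposal.

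The "only if" direction is in worse shape. That the branching functions $b^{\gamma_a}_m(q)$ are pairwise distinct is not a citation-free "standard result" -- it is a nontrivial assertion about these particular coset characters, and ties in lowest conformal weights do occur among the modules $M^{\delta}(k)$, so distinctness cannot be waved through. More seriously, the claim that all coincidences among multiplicity spaces are "controlled by the unique nontrivial simple current acting trivially on the $M^{(n)}$-factor" is essentially a restatement of the lemma itself, resting on a Galois/mirror-type correspondence for the pair $(L_{\wg}(n+1,0),M^{(n)})$ that again needs the rationality you cannot yet use. A workable elementary substitute, and presumably close to what [LS] does, is to distinguish the modules by concrete invariants of their lowest weight spaces: the lowest conformal weights together with the spectra of the elements $[\omega^{ij}]$ of the Zhu algebra acting there (the data recorded later in the paper for $W^{\delta'}(k)$) separate $\delta$ up to the flip $\delta\mapsto\delta+1^n$ and separate $k$ from $l\neq n+1-k$, which is exactly what the "only if" direction requires. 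As it stands, your proposal identifies the right structural mechanism but does not supply the arguments that make either direction rigorous.
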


For  $0\leq i<j<l\leq n$, let $M^{(ijl)}$ be the vertex operator subalgebra of $M^{(n)}$  generated by $\omega^{ij},\omega^{jl},\omega^{il}$. Then  $M^{(ijl)}$ is isomorphic to the vertex operator algebra
$$
L(\frac{7}{10},0)\otimes L(\frac{1}{2},0)\oplus L(\frac{7}{10}, \frac{3}{2})\otimes L(\frac{1}{2},\frac{1}{2})
$$
studied in \cite{CL}. It was pointed out in \cite{CL} that $M^{(ijl)}$ is rational and has eight inequivalent irreducible modules including one whose lowest weight space is two-dimensional. The following lemma is easy to check (see also \cite{CL}).
 \begin{lem}\label{l4.1}
 Denote by  $W^i, 1\leq i\leq 8$ the lowest weight spaces of the  inequivalent irreducible modules of $M^{(ijl)}$  such that $W^{1}$ is two-dimensional and $W^i, 2\leq i\leq 8$ are one-dimensional.  Then there is a basis of $W^{(1)}$ such that the matrices of $\omega^{ij},\omega^{il}, \omega^{jl}$ are
$$
\left[\begin{array}{ccc} 0 & 0\\
0 & \frac{1}{2}
\end{array}\right]£¬ \ \  \ \left[\begin{array}{cc} \frac{3}{8} & \frac{1}{8}\\
\frac{3}{8} & \frac{1}{8}
\end{array}\right]£¬ \ \ \ \left[\begin{array}{cc} \frac{3}{8} & -\frac{1}{8}\\
-\frac{3}{8} &  \frac{1}{8}
\end{array}\right]
$$
respectively and the action of $(\omega^{ij},\omega^{il}, \omega^{jl})$ on $W^{i}, 2\leq i\leq 8$ are
$$
(0,0,0), (0,\frac{1}{16},\frac{1}{16}), \ (\frac{1}{16},0,\frac{1}{16}), \ (\frac{1}{16},\frac{1}{16},0), \ (\frac{1}{2},\frac{1}{16},\frac{1}{16}), \ (\frac{1}{16},\frac{1}{2},\frac{1}{16}), \ (\frac{1}{16},\frac{1}{16},\frac{1}{2})
$$
respectively. In particular,  $\omega^{pq}(\omega^{pq}-\frac{1}{2})=0$ on $W^1$, where $pq=ij,jl$ or $il$.
 \end{lem}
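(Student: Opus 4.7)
The plan is to leverage the explicit description of $M^{(ijl)}$ from \cite{CL}, where the eight inequivalent irreducible modules are already classified (with the stated dimensional data for the lowest weight spaces), and then to compute the action of each $\omega^{pq}_{1}$, $pq\in\{ij,il,jl\}$, on these spaces. Two general observations will drive the analysis. First, by Lemma~\ref{l3.1}(1) each $\omega^{pq}$ is a Virasoro vector of central charge $\frac{1}{2}$, so the sub-VOA $\langle\omega^{pq}\rangle\subset M^{(ijl)}$ is isomorphic to the rational VOA $L(\frac{1}{2},0)$; since $\omega^{pq}_{n}$ for $n>0$ lowers the total $L(0)$-weight by $n$, it annihilates the lowest weight space of any irreducible $M^{(ijl)}$-module, and hence the eigenvalues of $\omega^{pq}_{1}$ on such a space lie in $\{0,\frac{1}{2},\frac{1}{16}\}$, the lowest conformal weights of the irreducible $L(\frac{1}{2},0)$-modules. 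Second, the $S_{3}$-action on $V_{A_{1}^{3}}$ permuting the three copies of $A_{1}$ restricts to VOA automorphisms of $M^{(ijl)}$ permuting $\{\omega^{ij},\omega^{il},\omega^{jl}\}$, so the eigenvalue data must be $S_{3}$-equivariant on the set of irreducibles.

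For each one-dimensional $W^{i}$, $2\leq i\leq 8$, the operator $\omega^{pq}_{1}$ acts by a scalar $a_{pq}\in\{0,\frac{1}{2},\frac{1}{16}\}$, and the total conformal weight of $W^{i}$ equals $\frac{4}{5}(a_{ij}+a_{il}+a_{jl})$ by Lemma~\ref{l3.1}(3). Matching to the list of lowest weights of the eight irreducibles in \cite{CL} and invoking the $S_{3}$-orbit decomposition of this set pins down the seven triples uniquely: the $S_{3}$-fixed vacuum gives $(0,0,0)$, one orbit of three modules produces the triples with one $0$ and two $\frac{1}{16}$'s, and the other orbit of three produces the triples with one $\frac{1}{2}$ and two $\frac{1}{16}$'s. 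For the two-dimensional $W^{1}$, the same annihilation argument shows that each $\omega^{pq}_{1}$ has both eigenvalues in $\{0,\frac{1}{2},\frac{1}{16}\}$, while the identity $\omega^{ij}_{1}+\omega^{il}_{1}+\omega^{jl}_{1}=\frac{5}{4}L(0)=\frac{5h}{4}I$ holds on $W^{1}$, where $h$ is its lowest weight. The $S_{3}$-symmetry forces the three traces to be equal, hence each equal to $\frac{5h}{6}$; the only value of $h$ from the classification in \cite{CL} for which two elements of $\{0,\frac{1}{2},\frac{1}{16}\}$ sum to $\frac{5h}{6}$ is $h=\frac{3}{5}$, yielding the eigenvalue pair $\{0,\frac{1}{2}\}$. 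This immediately gives the ``In particular'' claim $\omega^{pq}(\omega^{pq}-\frac{1}{2})=0$ on $W^{1}$, and choosing a basis that diagonalizes $\omega^{ij}_{1}$ produces the stated matrix for $\omega^{ij}$.

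The main obstacle is to pin down the specific off-diagonal entries $\pm\frac{1}{8},\pm\frac{3}{8}$ in the matrices of $\omega^{il}_{1}$ and $\omega^{jl}_{1}$. Each of these matrices has trace $\frac{1}{2}$ and determinant $0$ by the eigenvalue analysis above, but that information alone still leaves a one-parameter family of conjugates obtainable from the chosen basis by invertible diagonal rescalings. I expect to resolve this by direct computation in the lattice realization $V_{A_{1}^{3}}\supset M^{(ijl)}$, using the explicit formula $\omega^{pq}=\frac{1}{16}(\alpha^{p}-\alpha^{q})(-1)(\alpha^{p}-\alpha^{q})(-1)\mathbf{1}-\frac{1}{4}(e^{\alpha^{p}-\alpha^{q}}+e^{-\alpha^{p}+\alpha^{q}})$ given in the setup preceding Lemma~\ref{l3.1}, applied to an explicit model of $W^{1}$ sitting inside one of the lattice VOA modules $V_{\gamma_{a}+A_{1}^{3}}$; alternatively, these entries can be read off from the detailed analysis of the two-dimensional module in \cite{CL}, to which the lemma statement explicitly defers with the remark that the verification is ``easy to check.''
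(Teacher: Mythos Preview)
The paper itself offers no proof of this lemma beyond the remark that it ``is easy to check (see also \cite{CL}),'' so there is no argument in the paper to compare against; your proposal is a genuine proof where the paper has none.

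Your structural approach is sound, and in fact it already contains enough to pin down the matrices on $W^{1}$ without any lattice computation or appeal to \cite{CL} for the off-diagonal entries. Once you have fixed a basis with $\omega^{ij}_{1}=\mathrm{diag}(0,\tfrac{1}{2})$, the sum relation $\omega^{ij}_{1}+\omega^{il}_{1}+\omega^{jl}_{1}=\tfrac{3}{4}I$ forces $\omega^{jl}_{1}=\mathrm{diag}(\tfrac{3}{4},\tfrac{1}{4})-\omega^{il}_{1}$. Write $\omega^{il}_{1}=\left(\begin{smallmatrix} a & b\\ c & d\end{smallmatrix}\right)$. The trace condition gives $a+d=\tfrac{1}{2}$ and $\det=0$ gives $ad=bc$; imposing $\det(\omega^{jl}_{1})=0$ as well yields $(\tfrac{3}{4}-a)(\tfrac{1}{4}-d)=bc=ad$, which simplifies to $a+3d=\tfrac{3}{4}$. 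Solving the two linear equations gives $a=\tfrac{3}{8}$, $d=\tfrac{1}{8}$, and hence $bc=\tfrac{3}{64}$. In particular $b,c\neq 0$, so a diagonal rescaling of the basis (which keeps $\omega^{ij}_{1}$ diagonal) normalizes $b$ to $\tfrac{1}{8}$, and then $c=\tfrac{3}{8}$ is forced. This is exactly the matrix triple asserted in the lemma. So the ``main obstacle'' you flag dissolves once you combine your trace, determinant, and sum constraints; no further input is needed for $W^{1}$.

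For the one-dimensional $W^{i}$, your argument does rely on knowing the list of lowest weights from \cite{CL}, but that is entirely in keeping with how the paper frames the lemma.
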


  Recall that \cite{LS}, \cite{LY}
  $$
  \omega=\frac{4}{n+3}\sum\limits_{0\leq i< j\leq n}\omega^{ij}.
  $$
   For distinct $0\leq i,j,l\leq n$, denote $$\omega^{ijl}=\frac{4}{5}(\omega^{ij}+\omega^{jl}+\omega^{il}).$$
  The following lemma can be checked directly.
\begin{lem}\label{l4.9} For distinct $0\leq i,j,l,k\leq n$, we have
\begin{equation}\label{l4-e1}
(\omega, \omega^{ij})=(\omega^{ijl},\omega^{ij})=(\omega^{ij},\omega^{ij})=\frac{1}{4};
\end{equation}
\begin{equation}\label{l4-e2}
(\omega^{ijl},\omega^{ijl})=\frac{3}{5}, \ (\omega^{jl},\omega^{ij})=\frac{1}{32};
\end{equation}
\begin{equation}\label{l7-e1}
\omega^{ij}_1\omega^{jl}=\frac{1}{4}[\omega^{ij}+\omega^{jl}-\omega^{il}];
\end{equation}
\begin{equation}\label{ee1}
\omega^{ij}_{1}(\omega^{il}-\omega^{jl})=\frac{1}{2}(\omega^{il}-\omega^{jl}), \  \ \omega^{ij}_{p}(\omega^{il}-\omega^{jl})=0, \ \ p\geq 2;
\end{equation}
\begin{equation}\label{ee2}
\omega^{ij}_{p}(\omega^{ijl}-\omega^{ij})=0, \   \ p\geq 0;
\end{equation}
\begin{equation}\label{l4-e8}
\omega^{ij}_{0}\omega^{ij}=L(-1)\omega^{ij}=\omega^{ijl}_{0}\omega^{ij};
\end{equation}
\begin{equation}\label{l4-e9}
(\omega^{ij}_{0}(\omega^{il}-\omega^{jl}))_{0}\omega^{ij}=[-\frac{4}{3}\omega^{ij}_{-1}+\frac{3}{2}L(-1)\omega^{ij}_{0}-\frac{1}{2}L(-1)^{2}](\omega^{il}-\omega^{jl}).
\end{equation}

\end{lem}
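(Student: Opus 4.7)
The plan is to verify each identity by direct computation, exploiting two structural facts: the explicit lattice realization of each $\omega^{ij}$ in $V_{A_1^{n+1}}$, and the embedding of $\omega^{ij},\omega^{il},\omega^{jl},\omega^{ijl}$ inside the sub-vertex-operator-algebra $M^{(ijl)}$ whose representation theory is recorded in Lemma~\ref{l4.1}.

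For the inner products (\ref{l4-e1})--(\ref{l4-e2}), I would use $u_3 v = (u,v){\mathbf 1}$ on the weight-two subspace. The value $(\omega^{ij},\omega^{ij}) = \tfrac14$ just records that $\omega^{ij}$ is a Virasoro element of central charge $\tfrac12$, and $(\omega^{jl},\omega^{ij}) = \tfrac{1}{32}$ is a direct lattice computation: the exponential pieces do not contribute a $z^{-4}$ term here, and the Heisenberg pieces contract through the shared index $j$. The remaining identities follow by linearity: writing $\omega = \tfrac{4}{n+3}\sum_{p<q}\omega^{pq}$, summands with $\{p,q\}\cap\{i,j\}=\emptyset$ involve disjoint Heisenberg slots and contribute zero, one-shared-index summands each contribute $\tfrac{1}{32}$, and $\omega^{ij}$ itself contributes $\tfrac14$; a short combinatorial count yields $(\omega,\omega^{ij}) = \tfrac14$. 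Similarly $(\omega^{ijl},\omega^{ij}) = \tfrac45[\tfrac14+2\cdot\tfrac{1}{32}] = \tfrac14$, and $(\omega^{ijl},\omega^{ijl}) = \tfrac35$ is just the $c/2$ formula for the Virasoro of $M^{(ijl)}$, of central charge $\tfrac65$.

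For the OPE identities (\ref{l7-e1})--(\ref{ee2}), I would first compute $\omega^{ij}_1\omega^{jl}$ directly from the lattice formulas: the Heisenberg cross terms together with the exponential-exponential OPE $Y(e^{\pm(\alpha^i-\alpha^j)}, z)e^{\pm(\alpha^j-\alpha^l)} \sim z^{-2} e^{\pm(\alpha^i-\alpha^l)}$ with a $-1$ cocycle sign combine to give $\tfrac14(\omega^{ij}+\omega^{jl}-\omega^{il})$. Identity (\ref{ee1}) then follows: the analogue of (\ref{l7-e1}) with $l$ replacing $j$ in the second slot gives $\omega^{ij}_1\omega^{il} = \tfrac14(\omega^{ij}+\omega^{il}-\omega^{jl})$, and subtracting yields $\omega^{ij}_1(\omega^{il}-\omega^{jl}) = \tfrac12(\omega^{il}-\omega^{jl})$; the vanishing for $p\ge 2$ is a direct OPE check (the would-be $(z-w)^{-k}$ contributions with $k\ge 3$ cancel between the two terms). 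Identity (\ref{ee2}) is the Goddard--Kent--Olive coset statement: summing (\ref{l7-e1}) over the three pairs shows that $\omega^{ij}$ and $\omega^{ijl}-\omega^{ij}$ form a mutually commuting pair of conformal vectors of central charges $\tfrac12$ and $\tfrac{7}{10}$ summing to the Virasoro vector $\omega^{ijl}$ of $M^{(ijl)}$. Identity (\ref{l4-e8}) is structural: the zero mode of any conformal vector acts as the canonical translation operator on the sub-vertex-algebra it generates, so both $\omega^{ij}_0$ and $\omega^{ijl}_0$ send $\omega^{ij}$ to $L(-1)\omega^{ij}$.

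The main obstacle is (\ref{l4-e9}). Setting $A = \omega^{ij}$ and $B = \omega^{il}-\omega^{jl}$, the Borcherds identity gives $(A_0 B)_0 A = [A_0, B_0]A = A_0 B_0 A - B_0 A_0 A$. Using (\ref{l4-e8}) to rewrite $A_0 A = L(-1)A$, the commutation $[L(-1), B_0] = 0$, and the skew-symmetry $Y(B,z)A = e^{zL(-1)}Y(A,-z)B$ together with the primary-field data of (\ref{ee1}) (which yields $B_0 A = -A_0 B + \tfrac12 L(-1) B$), the expression simplifies to $-A_0^2 B + \tfrac32 L(-1) A_0 B - \tfrac12 L(-1)^2 B$. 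Matching this against the claimed right-hand side reduces to the single identity $A_0^2 B = \tfrac43 A_{-1} B$, which is precisely the Ising null-vector relation $L_{-1}^2 v = \tfrac43 L_{-2} v$ for the weight-$\tfrac12$ primary at central charge $\tfrac12$: since $B$ is a weight-$\tfrac12$ primary for $\langle A\rangle\cong L(\tfrac12,0)$, and $M^{(n)}$ is a rational $L(\tfrac12,0)$-module under this sub-Virasoro, the cyclic $\langle A\rangle$-submodule generated by $B$ is a quotient of $L(\tfrac12,\tfrac12)$ and this level-two null vector holds automatically. The delicate task is keeping signs straight in the skew-symmetry expansion; as a cross-check, I would evaluate both sides on the two-dimensional lowest-weight $M^{(ijl)}$-module of Lemma~\ref{l4.1}, where the actions of $\omega^{ij},\omega^{il},\omega^{jl}$ are given by the explicit $2\times 2$ matrices.
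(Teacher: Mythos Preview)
Your proposal is correct and carries out exactly what the paper intends: the paper states only that the lemma ``can be checked directly'' and gives no details, and your argument supplies that direct verification. The handling of (\ref{l4-e9}) via the level-two null vector $L(-1)^2 v = \tfrac{4}{3}L(-2)v$ in $L(\tfrac12,\tfrac12)$ is a clean structural shortcut for what would otherwise be a longer lattice computation; the rationality of $L(\tfrac12,0)$ together with the finiteness of the $L(0)$-grading on $M^{(n)}\subset V_{A_1^{n+1}}$ indeed guarantees that the $\langle\omega^{ij}\rangle$-submodule generated by $B=\omega^{il}-\omega^{jl}$ is the irreducible $L(\tfrac12,\tfrac12)$, so the null relation holds.
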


\begin{lem}\label{l4.10}
$M^{(n)}$ is linearly spanned by
\begin{equation}\label{l4-e7}
\omega^{i_{1}j_{1}}_{n_{1}}\omega^{i_{2}j_{2}}_{n_{2}}\cdots \omega^{i_{s}j_{s}}_{n_{s}}{\bf 1},
\end{equation}
where $s\geq 0$, $n_{k}\leq 0$ and $0\leq i_{k}<j_{k}\leq n$, $k=1,2,\cdots,s$.
\end{lem}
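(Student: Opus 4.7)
My plan is to establish this spanning result by a standard Poincar\'e--Birkhoff--Witt-type normal-ordering argument for vertex operator algebras generated (as VOAs) by a given set. By Lemma~\ref{l3.1}(2), $M^{(n)}$ is generated by $\{\omega^{ij}\mid 0\leq i<j\leq n\}$, so every element of $M^{(n)}$ is already a linear combination of monomials of the form~(\ref{l4-e7}) with unrestricted integer exponents $n_k$; this uses only the standard description of the VOA subalgebra generated by a subset as the smallest subspace containing $\mathbf{1}$ and stable under all modes of the generators. The task is therefore to reduce to the case $n_k\leq 0$ for every $k$.

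The reduction rests on three tools: the creation property $u_m\mathbf{1}=0$ for $m\geq 0$; the commutator formula
$$[u_m,v_n]=\sum_{i\geq 0}\binom{m}{i}(u_iv)_{m+n-i};$$
and the iterate (Borcherds) formula, valid for $m\geq 0$,
$$(u_mv)_n=\sum_{i\geq 0}(-1)^i\binom{m}{i}\bigl(u_{m-i}v_{n+i}-(-1)^m v_{m+n-i}u_i\bigr).$$
I would induct on the lexicographic pair $(W,p)$, where $W$ is the total weight of the monomial and $p$ is the number of indices $k$ with $n_k\geq 1$. For the inductive step, if $n_s\geq 0$ the monomial vanishes by creation, so assume $n_s\leq -1$; pick the rightmost $k<s$ with $n_k\geq 1$ and use the commutator formula to swap $\omega^{i_kj_k}_{n_k}$ past $\omega^{i_{k+1}j_{k+1}}_{n_{k+1}}$. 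In the ``straightened'' term the positive mode is shifted one position to the right, and iterating brings it eventually to $\mathbf{1}$ where it dies. The commutator terms $(\omega^{i_kj_k}_i\omega^{i_{k+1}j_{k+1}})_r$ with $i\geq 0$ are re-expanded via the iterate formula into sums of products of single modes of the two generators, producing new monomials of shape~(\ref{l4-e7}) to which the induction hypothesis applies.

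The crucial observation for termination is that for $i\geq 1$ the intermediate vector $\omega^{i_kj_k}_i\omega^{i_{k+1}j_{k+1}}$ has weight $3-i\leq 2$, so the resulting monomials live in strictly lower total weight and are absorbed by the outer induction; while for $i=0$ the iterate formula with $m=0$ collapses $(\omega^{i_kj_k}_0\omega^{i_{k+1}j_{k+1}})_r$ to the bare commutator $[\omega^{i_kj_k}_0,\omega^{i_{k+1}j_{k+1}}_r]$, producing two same-weight monomials in generator modes in which the positive mode has been pushed one step further right. The main obstacle is certifying that the lexicographic complexity $(W,p)$ strictly decreases at every step, particularly in the weight-preserving $i=0$ case; the explicit identities in Lemma~\ref{l4.9}, such as $\omega^{ij}_1\omega^{jl}=\tfrac{1}{4}(\omega^{ij}+\omega^{jl}-\omega^{il})$ and $\omega^{ij}_0\omega^{ij}=L(-1)\omega^{ij}=\omega^{ij}_{-2}\mathbf{1}$, make the low-weight reductions completely explicit, ensuring that the algorithm terminates after finitely many steps and outputs a linear combination of monomials with all modes $\leq 0$.
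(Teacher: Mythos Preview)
Your overall strategy is the right shape, and the paper in fact begins the same way: Jacobi identity plus induction, reducing everything to the single commutator term with $i=0$. But the last paragraph of your sketch hides precisely the place where a generic PBW-type argument breaks down, and this is where the paper has to do real work.

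The difficulty is that your generating set $\{\omega^{ij}\}$ is \emph{not} closed under non-negative products: $\omega^{ij}_0\omega^{jl}$ is a genuine weight-$3$ vector, not a combination of the $\omega^{pq}$'s. A standard normal-ordering argument terminates only when $u_iv$ (for $i\geq 0$) lands back in the span of the generators and $\mathbf{1}$; that fails here for $i=0$, and re-expanding $(\omega^{ij}_0\omega^{kl})_r=[\omega^{ij}_0,\omega^{kl}_r]$ does not make your lexicographic complexity $(W,p)$ drop. Concretely, run your algorithm on the weight-$3$ monomial $\omega^{12}_1\omega^{13}_0\omega^{23}_{-1}\mathbf{1}$: the $i=0$ branch eventually produces $-\,\omega^{13}_1\omega^{12}_0\omega^{23}_{-1}\mathbf{1}$, and applying the algorithm to \emph{that} produces $-\,\omega^{12}_1\omega^{13}_0\omega^{23}_{-1}\mathbf{1}$ again. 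You obtain a consistent identity but no reduction. The identities you quote from Lemma~\ref{l4.9} handle $\omega^{ij}_1\omega^{jl}$ and $\omega^{ij}_0\omega^{ij}$, but not the generic $\omega^{ij}_0\omega^{jl}$, so they do not close this loop.

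What the paper does instead is structural rather than combinatorial. It inducts on the weight of the tail $y^1=\omega^{i_2j_2}_{n_2}\cdots\omega^{i_mj_m}_{n_m}\mathbf{1}$, rewrites the weight-$2$ space in the $\omega^{ij}$-adapted basis $\Omega^{ij}$ so that the only surviving problematic piece is $(\omega^{ij}_0(\omega^{il}-\omega^{jl}))_{n_1}$, and then uses that $\omega^{il}-\omega^{jl}$ is a lowest-weight vector (of weight $\tfrac12$) for the rational Virasoro subalgebra $L^{(ij)}(\tfrac12,0)$. The heart of the argument is a Claim showing that inside the $L^{(ij)}(\tfrac12,0)$-submodule built from $(\omega^{il}-\omega^{jl})_k v^q$, every $(\omega^{ij}_0(\omega^{il}-\omega^{jl}))_{r}v^q$ can be rewritten as $\omega^{ij}$-modes applied to vectors $(\omega^{il}-\omega^{jl})_k v^q$; this uses the module theory of $L(\tfrac12,0)$ together with the explicit identity~(\ref{l4-e9}). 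The weight-$3$ obstruction is thus tamed by the representation theory of the $c=\tfrac12$ Virasoro algebra, and that is the missing idea in your sketch.
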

\pf   We denote by $M'^{(n)}$ the subspace of $M^{(n)}$ linearly spanned by (\ref{l4-e7}). It suffices to prove that $M^{(n)}\subseteq M'^{(n)}$. By Proposition 4.6 in \cite{LS}, $M^{(n)}$ is generated by $\omega^{ij}$, where $0\leq i<j\leq n$. So it is enough to prove that  for any homogeneous $u\in M^{(n)}$ such that
$$
u=\omega^{i_{1}j_{1}}_{n_{1}}\omega^{i_{2}j_{2}}_{n_{2}}\cdots \omega^{i_{m}j_{m}}_{n_{m}}{\bf 1},
$$
we have $u\in M'^{(n)}$, where $m\geq 1$, $n_{1}\geq 1,n_{l}\leq 0$, $2\leq l\leq m$ and $0\leq i_{k}<j_{k}\leq n$, $k=1,2,\cdots,m$. Denote
 $$
 y^1=\omega^{i_{2}j_{2}}_{n_{2}}\cdots \omega^{i_{m}j_{m}}_{n_{m}}{\bf 1}.
 $$
 We prove this by induction  on $wt(y^1)$. Obviously, if $wt(y^1)=2$, then $u\in M'^{(n)}$. Assume for all such homogeneous $u$ satisfying $wt(y^1)< N$, $u\in M'^{(n)}$. Now suppose that $wt(y^1)=N$.
By Jacobi identity,
$$
\begin{array}{ll}
& \omega^{i_{1}j_{1}}_{n_{1}}\omega^{i_{2}j_{2}}_{n_{2}}\cdots \omega^{i_{m}j_{m}}_{n_{m}}{\bf 1}\\
= & \omega^{i_{2}j_{2}}_{n_{2}}\omega^{i_{1}j_{1}}_{n_{1}}\omega^{i_{3}j_{3}}_{n_{3}}\cdots \omega^{i_{m}j_{m}}_{n_{m}}{\bf 1}+ \sum\limits_{q=0}^{n_{1}}
\left(\begin{array}{c}n_{1}\\q\end{array}\right)(\omega^{i_{1}j_{1}}_{q}\omega^{i_{2}j_{2}})_{n_{1}+n_{2}-q}\omega^{i_{3}j_{3}}_{n_{3}}\cdots \omega^{i_{m}j_{m}}_{n_{m}}{\bf 1}.
\end{array}
$$
 Note that $n_{1}\geq 1$ and for $q\geq 1$, $\omega^{i_{1}j_{1}}_{q}\omega^{i_{2}j_{2}}\in V_{2}+V_{0}$ and $wt(\omega^{i_{3}j_{3}}_{n_{3}}\cdots \omega^{i_{m}j_{m}}_{n_{m}}{\bf 1})\leq wt(y^1)$, by inductive assumption,
$$
\omega^{i_{1}j_{1}}_{n_{1}}\omega^{i_{3}j_{3}}_{n_{3}}\cdots \omega^{i_{m}j_{m}}_{n_{m}}{\bf 1}\in M'^{(n)}, \ (\omega^{i_{1}j_{1}}_{q}\omega^{i_{2}j_{2}})_{n_{1}+n_{2}-q}\omega^{i_{3}j_{3}}_{n_{3}}\cdots \omega^{i_{m}j_{m}}_{n_{m}}{\bf 1}\in M'^{(n)}, \ q\geq 1.
$$
So we only need to prove that
 \begin{equation}\label{add-1}
 (\omega^{i_{1}j_{1}}_{0}\omega^{i_{2}j_{2}})_{n_{1}+n_{2}}\omega^{i_{3}j_{3}}_{n_{3}}\cdots \omega^{i_{m}j_{m}}_{n_{m}}{\bf 1}\in M'^{(n)}.
 \end{equation}

 If $n_2<0$, then
 $$
 \begin{array}{ll}
 &(\omega^{i_{1}j_{1}}_{0}\omega^{i_{2}j_{2}})_{n_{1}+n_{2}}\omega^{i_{3}j_{3}}_{n_{3}}\cdots \omega^{i_{m}j_{m}}_{n_{m}}{\bf 1}\\
 =& \omega^{i_{1}j_{1}}_{0}\omega^{i_{2}j_{2}}_{n_{1}+n_{2}}\omega^{i_{3}j_{3}}_{n_{3}}\cdots \omega^{i_{m}j_{m}}_{n_{m}}{\bf 1} -\omega^{i_{2}j_{2}}_{n_{1}+n_{2}}\omega^{i_1j_1}_{0}\omega^{i_{3}j_{3}}_{n_{3}}\cdots \omega^{i_{m}j_{m}}_{n_{m}}{\bf 1}
 \end{array}
 $$
 Then $wt(\omega^{i_1j_1}_{0}\omega^{i_{3}j_{3}}_{n_{3}}\cdots \omega^{i_{m}j_{m}}_{n_{m}}{\bf 1})<wt(y^1)=N$. Obviously, $(\omega^{i_{3}j_{3}}_{n_{3}}\cdots \omega^{i_{m}j_{m}}_{n_{m}}{\bf 1})<N$, so by inductive assumption,
 $$
 \omega^{i_{2}j_{2}}_{n_{1}+n_{2}}\omega^{i_1j_1}_{0}\omega^{i_{3}j_{3}}_{n_{3}}\cdots \omega^{i_{m}j_{m}}_{n_{m}}{\bf 1}\in M'^{(n)}, \
 \omega^{i_1j_1}_{0}\omega^{i_{2}j_{2}}_{n_{1}+n_{2}}\omega^{i_{3}j_{3}}_{n_{3}}\cdots \omega^{i_{m}j_{m}}_{n_{m}}{\bf 1}\in M'^{(n)}.
 $$
 Therefore we may assume that $n_2=0$. That is, it suffices to prove that
\begin{equation}\label{addi-1}
 (\omega^{i_{1}j_{1}}_{0}\omega^{i_{2}j_{2}})_{n_{1}}\omega^{i_{3}j_{3}}_{n_{3}}\cdots \omega^{i_{m}j_{m}}_{n_{m}}{\bf 1}\in M'^{(n)}.
 \end{equation}

  For short, we denote $i=i_1$, $j=j_1$. Recall  that for $0\leq l\leq n$ such that $l\neq i,j$, $\omega^{ijl}=\frac{4}{5}(\omega^{ij}+\omega^{jl}+\omega^{il})$. By the fact that the weight two subspace  $M^{(n)}_{2}$ is linearly spanned by $\omega^{pq}$, $0\leq p,q\leq n$, $p\neq q$,
 it is easy to see that for fixed $0\leq i\neq j\leq n$, $M^{(n)}_{2}$ is linearly spanned by
 $$\Omega^{ij}=\{\omega^{ij},\ \omega^{ijl}-\omega^{ij}, \ \omega^{il}-\omega^{jl}, \ \omega^{kl},  | \  0\leq k\neq l\leq n, \ k,l\neq i,j\}.$$
Let $L^{(ij)}(\frac{1}{2},0)$ be the Virasoro vertex operator algebra generated by $\omega^{ij}$. For $k,l\neq i,j$, by Lemma \ref{l4.9}, $\omega^{ijl}-\omega^{ij},\ \omega^{kl}$ generate irreducible modules of $L^{(ij)}(\frac{1}{2},0)$ isomorphic to itself respectively,  and $\omega^{il}-\omega^{jl}$ generates an irreducible $L^{(ij)}(\frac{1}{2},0)$-module isomorphic to $L(\frac{1}{2},\frac{1}{2})$.   Note that for $k,l\neq i,j$,
$$
\omega^{ij}_0(\omega^{ijl}-\omega^{ij})=\omega^{ij}_0(\omega^{kl})=0,
$$
and for any $m\in\Z$,
$$
(\omega^{ij}_{0}\omega^{ij})_m=-m\omega^{ij}_{m-1}.
$$
Then it is enough to prove that
 $$
 (\omega^{ij}_{0}(\omega^{il}-\omega^{jl}))_{n_{1}}x^3_{n_3}\cdots x^m_{n_m}{\bf 1}\in M'^{(n)},
 $$
 where $n_1,\cdots, n_m$ are the same as above, and
 $x^3,\cdots,x^m\in\Omega^{ij}.$

 Denote
 $$
 y^2=x^{3}_{n_{3}}\cdots x^{m}_{n_{m}}{\bf 1}.
 $$
 Obviously, $wt(y^2)<N$.
Let $W$ be the $L(\frac{1}{2},0)$-submodule of $M^{(n)}$ generated by $y^2$. Then
$$
W^{ij}=span\{\omega^{ij}_{k_1}\cdots\omega^{ij}_{k_t}x^{3}_{n_{3}}\cdots x^{m}_{n_{m}}{\bf 1}| t\geq0, \ k_p\in\Z, \ 1\leq p\leq t, k_1\leq k_2\leq\cdots\leq k_t\}.$$
Note that
 $W^{ij}$ is a direct sum of irreducible $L^{(ij)}(\frac{1}{2},0)$-modules. Then we may assume that
$$
y^2=\sum\limits_{q=1}^{t}v^q,
$$
 for some $t\geq 1$,  where for $1\leq q\leq t$, $v^q$ is a non-zero element in an irreducible $L^{(ij)}(\frac{1}{2},0)$-module generated by a lowest weight vector $v'^{q}$. By inductive assumption,
$$
W^{ij}\subseteq M'^{(n)}.
$$
Therefore $v'^q, v^q\in M'^{(n)}$ and  $wt(v'^q)<N$, $wt(v^q)<N$,  $q=1,2,\cdots,t$. So by inductive assumption, for $k\in\Z$,
\begin{equation}\label{claim1}(\omega^{il}-\omega^{jl})_{k}v^q\in M'^{(n)},\  \ (\omega^{il}-\omega^{jl})_{k}v'^q\in M'^{(n)}.\end{equation}

  For $1\leq q\leq t$,
if $v^q\notin \C v'^q$, then $v^q$ is a linear combination of elements of the form:
$$
\omega^{ij}_{-a_{1}}\cdots \omega^{ij}_{-a_{k}}v'^q,
$$
where $k\geq 1$, $a_{1},\cdots, a_{k}\geq 0$. Since
$$
\begin{array}{ll}
& (\omega^{ij}_{0}(\omega^{il}-\omega^{jl}))_{n_1}\omega^{ij}_{-a_{1}}\cdots \omega^{ij}_{-a_{k}}v'^q\\
=& \omega^{ij}_{-a_{1}}(\omega^{ij}_0(\omega^{il}-\omega^{jl}))_{n_1}\omega^{ij}_{-a_{2}}\cdots \omega^{ij}_{-a_{k}}v'^q
+((\omega^{ij}_{0}(\omega^{il}-\omega^{jl}))_0\omega^{ij})_{n_1-a_{1}}\omega^{ij}_{-a_{2}}\cdots \omega^{ij}_{-a_{k}}v'^q\\
& +\sum\limits_{r=1}^{n_1}\left(\begin{array}{c}n_{1}\\r\end{array}\right)((\omega^{ij}_{0}(\omega^{il}-\omega^{jl}))_r\omega^{ij})_{n_1-a_{1}-r}\omega^{ij}_{-a_{2}}\cdots \omega^{ij}_{-a_{k}}v'^q,
\end{array}
$$
by inductive assumption and (\ref{l4-e9}), we see that $((\omega^{ij}_0(\omega^{il}-\omega^{jl}))_{n_1}v^q\in M'^{(n)}$.

If $v^q\in\C v'^q$, we may assume that $v^q=v'^q$. Then $v^q$ is a lowest weight vector of the Virasoro vertex operator algebra $L^{(ij)}(\frac{1}{2},0)$ such that
$$
\omega^{ij}_1v^q=a v^q,$$
where $a=0, \frac{1}{2}$ or $\frac{1}{16}$ (Actually, $a=0$ or $\frac{1}{2}$ by using fusion rules of the vertex operator algebra  $L^{(ij)}(\frac{1}{2},0)$).
 For $r\in\Z$, $r\leq wt(v^q)+1$, set
$$
\begin{array}{ll}
U^{r}=&span\{\omega^{ij}_{-k_1}\cdots\omega^{ij}_{-k_b}(\omega^{il}-\omega^{jl})_{k}v^q, \\
&\omega^{ij}_{-k_1}\cdots\omega^{ij}_{-k_b}(\omega^{ij}_0(\omega^{il}-\omega^{jl}))_{k+1}v^q| b\geq 0, k_1\geq\cdots\geq k_b\geq 0, k\geq r\}.
\end{array}
$$
Since $\omega^{ij}_0\omega^{ij}_0(\omega^{il}-\omega^{jl})=\frac{4}{3}\omega^{ij}_{-1}(\omega^{il}-\omega^{jl})$, it is easy to check that $U^r$ is an $L^{(ij)}(\frac{1}{2},0)$-submodule of $M^{(n)}$. It is obvious that $U^{r+1}\subseteq U^{r}$, for $r\in\Z$, $r\leq wt(v^q)+1$.
 We have the following claim.

{\bf Claim} \ For $r\leq wt(v^q)+1$,
$$
U^{r}=span\{\omega^{ij}_{-k_1}\cdots\omega^{ij}_{-k_b}(\omega^{il}-\omega^{jl})_{k}v^q, | b\geq 0, k_1\geq\cdots\geq k_b\geq 0, k\geq r\}.
$$

{\em Proof of the claim} \ Let $h\leq wt(v^q)+1$ be such that
$$
(\omega^{il}-\omega^{jl})_{h}v^q\neq 0, \ (\omega^{il}-\omega^{jl})_{h+k}v^q=0, \ k\geq 1.
$$
Then
$$
\omega^{ij}_1(\omega^{il}-\omega^{jl})_{h+k}v^q=0, \ k\geq 1.
$$
On the other hand, for $k\geq 1$,
$$
\begin{array}{ll}
& \omega^{ij}_1(\omega^{il}-\omega^{jl})_{h+k}v^q\\
=& \sum\limits_{r=0}^{1}\left(\begin{array}{c}1\\r\end{array}\right)((\omega^{ij}_r(\omega^{il}-\omega^{jl}))_{h+k+1-r}v^q+(\omega^{il}-\omega^{jl})_{h+k}\omega^{ij}_1v^q\\ 
=& (\omega^{ij}_0(\omega^{il}-\omega^{jl}))_{h+k+1}v^q+(\frac{1}{2}+a)(\omega^{il}-\omega^{jl})_{h+k}v^q.
\end{array}
$$
So we have
$$
(\omega^{ij}_0(\omega^{il}-\omega^{jl}))_{h+k}v^q=0, \ k\geq 2.
$$
Then for $k\geq 2$, 
$$
\begin{array}{ll}
& \omega^{ij}_k(\omega^{il}-\omega^{jl})_{h}v^q\\
= & \sum\limits_{r=0}^{k}\left(\begin{array}{c}k\\r\end{array}\right)((\omega^{ij}_r(\omega^{il}-\omega^{jl}))_{h+k-r}v^q\\
=& (\omega^{ij}_0(\omega^{il}-\omega^{jl}))_{h+k}v^q+\frac{k}{2}(\omega^{il}-\omega^{jl})_{h+k-1}v^q\\
= & 0.
\end{array}
$$
This means that $(\omega^{il}-\omega^{jl})_hv^q$ is a lowest weight vector of $L^{(ij)}(\frac{1}{2},0)$. So
$$
\omega^{ij}_1(\omega^{il}-\omega^{jl})_{h}v^q=a^{(h)}(\omega^{il}-\omega^{jl})_{h}v^q,
$$
for some $a^{(h)}\in \{0,\frac{1}{2},\frac{1}{16}\}$.
Note that
$$
\omega^{ij}_1(\omega^{il}-\omega^{jl})_{h}v^q=(\omega^{ij}_0(\omega^{il}-\omega^{jl}))_{h+1}v^q+(\frac{1}{2}+a)(\omega^{il}-\omega^{jl})_hv^q.
$$
So
$$
(\omega^{ij}_0(\omega^{il}-\omega^{jl}))_{h+1}v^q=(a^{(h)}-\frac{1}{2}-a)(\omega^{il}-\omega^{jl})_{h}v^q,
$$
where $a$ is the same as above.
We have shown  that the claim holds for $r\geq h$. Now assume that for $r+1$, the claim is true. Then
$$
U^{r+1}=span\{\omega^{ij}_{-k_1}\cdots\omega^{ij}_{-k_b}(\omega^{il}-\omega^{jl})_{k}v^q| b\geq 0, k_1\geq\cdots\geq k_b\geq 0, k\geq r+1\}.
$$
We will prove that the claim is true for $r$. Consider the $L^{(ij)}(\frac{1}{2},0)$-module
$U^r/U^{r+1}$. Since
$$
\omega^{ij}_{2}(\omega^{il}-\omega^{jl})_{r}v^q=(\omega^{ij}_0(\omega^{il}-\omega^{jl}))_{r+2}v^q+(\omega^{il}-\omega^{jl})_{r+1}v^q\in U^{r+1}
$$
and
$$
\omega^{ij}_{3}(\omega^{il}-\omega^{jl})_{r}v^q=(\omega^{ij}_0(\omega^{il}-\omega^{jl}))_{r+3}v^q+\frac{3}{2}(\omega^{il}-\omega^{jl})_{r+2}v^q\in U^{r+1},$$
it follows that the image of $(\omega^{il}-\omega^{jl})_{r}v^q$ in $U^r/U^{r+1}$ is a lowest weight vector of $L^{(ij)}(\frac{1}{2},0)$. So there exists $v\in U^{r+1}$ such that
$$
\omega^{ij}_1(\omega^{il}-\omega^{jl})_{r}v^q=a^{(r)}(\omega^{il}-\omega^{jl})_{r}v^q+v.
$$
where $a^{(r)}=0$ or $\frac{1}{2}$. On the other hand,
$$
\omega^{ij}_1(\omega^{il}-\omega^{jl})_{r}v^q=(\omega^{ij}_0(\omega^{il}-\omega^{jl}))_{r+1}v^q+(\frac{1}{2}+a)(\omega^{il}-\omega^{jl})_{r}v^q.
$$
We deduce that
 $$(\omega^{ij}_0(\omega^{il}-\omega^{jl}))_{r+1}v^q\in span\{\omega^{ij}_{-k_1}\cdots\omega^{ij}_{-k_b}(\omega^{il}-\omega^{jl})_{k}v^q| b\geq 0, k_1\geq\cdots\geq k_b\geq 0, k\geq r\}.$$
 This shows  that the claim holds for $r$, since for $k\geq 2$, $(\omega^{ij}_0(\omega^{il}-\omega^{jl}))_{r+k}v^q\in U^{r+1}$. The proof of the claim is complete.

Now in the claim let $r=n_1-1$, then
$$
(\omega^{ij}_0(\omega^{il}-\omega^{jl}))_{n_1}v^q\in U^{n_1-1}.
$$
By the claim
$$
(\omega^{ij}_0(\omega^{il}-\omega^{jl}))_{n_1}v^q\in span\{\omega^{ij}_{-k_1}\cdots\omega^{ij}_{-k_b}(\omega^{il}-\omega^{jl})_{k}v^q| b\geq 0, k_1\geq\cdots\geq k_b\geq 0, k\geq n_1-1\}.
$$
By (\ref{claim1}), $(\omega^{ij}_0(\omega^{il}-\omega^{jl}))_{n_1}v^q\in M'^{(n)}$. Therefore $u\in M'^{(n)}$.  We complete the proof of the lemma. \qed

\vskip 0.3cm
Let $A(M^{(n)})$ be the Zhu algebra of $M^{(n)}$. We denote the multiplication in $A(M^{(n)})$ by $*$. For $u\in M^{(n)}$, we denote the image of $u$ in $A(M^{(n)})$ by $[u]$. For distinct $0\leq i,j,k,l\leq n$, by Lemma \ref{l4.9}, we have
\begin{equation}\label{zhu-sym}
[\omega^{ij}]*[\omega^{kl}]=[\omega^{kl}]*[\omega^{ij}].
\end{equation}

\begin{theorem}\label{p4.2}
The Zhu algebra $A(M^{(n)})$ is generated by $[\omega^{ij}], 0\leq i,j\leq n$.
\end{theorem}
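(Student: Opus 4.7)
\emph{Proof plan.} Let $B\subseteq A(M^{(n)})$ denote the subalgebra generated by the classes $[\omega^{ij}]$, $0\leq i<j\leq n$. The strategy is to combine Lemma~\ref{l4.10} with three standard Zhu-algebra identities to prove, by induction on the weight $N$, that $[u]\in B$ for every homogeneous $u\in M^{(n)}_N$. The cases $N\leq 2$ are immediate, since $M^{(n)}_0=\C{\bf 1}$, $M^{(n)}_1=0$, and $M^{(n)}_2=\mathrm{span}\{\omega^{ij}\}$. For the inductive step, take a spanning element $u=\omega^{i_1j_1}_{n_1}v$ from Lemma~\ref{l4.10}, where $v=\omega^{i_2j_2}_{n_2}\cdots\omega^{i_sj_s}_{n_s}{\bf 1}$ and all $n_k\leq 0$. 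Since $\wt(v)=N-1+n_1\leq N-1$, the inductive hypothesis gives $[v]\in B$, so both Zhu products $[\omega^{i_1j_1}]*[v]$ and $[v]*[\omega^{i_1j_1}]$ lie in $B$. The argument then splits into the three cases $n_1\leq -2$, $n_1=-1$, $n_1=0$.

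If $n_1\leq -2$, I apply Zhu's higher-order relation $\Res_z\,z^{n_1}(1+z)^{-n_1}Y(\omega^{i_1j_1},z)v\in O(M^{(n)})$ to obtain
$$
[\omega^{i_1j_1}_{n_1}v]=-\sum_{i\geq 1}\binom{-n_1}{i}[\omega^{i_1j_1}_{n_1+i}v];
$$
each term on the right has weight $N-i<N$ and lies in $B$ by induction. If $n_1=-1$, the Zhu product formula
$$
[\omega^{i_1j_1}]*[v]=[\omega^{i_1j_1}_{-1}v]+2[\omega^{i_1j_1}_0v]+[\omega^{i_1j_1}_1v]
$$
solves for $[\omega^{i_1j_1}_{-1}v]$; the two correction terms have weights $N-1$ and $N-2$ respectively, hence lie in $B$ by induction.

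The main obstacle is the remaining case $n_1=0$: the element $[\omega^{i_1j_1}_0v]$ still sits in weight $N$ (since the mode $\omega^{i_1j_1}_0=L^{(i_1j_1)}(-1)$ raises weight by one), and the product formula alone would only trade it for the even heavier $[\omega^{i_1j_1}_{-1}v]$. To resolve it, I invoke Zhu's commutator identity $u*v-v*u\equiv \Res_z(1+z)^{\wt u-1}Y(u,z)v\pmod{O(V)}$, which for $u=\omega^{i_1j_1}$ specializes to
$$
[\omega^{i_1j_1}]*[v]-[v]*[\omega^{i_1j_1}]=[\omega^{i_1j_1}_0v]+[\omega^{i_1j_1}_1v].
$$
The left-hand side lies in $B$, and $[\omega^{i_1j_1}_1v]$ has weight $N-1<N$ and lies in $B$ by induction, so $[\omega^{i_1j_1}_0v]\in B$. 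This closes the induction and establishes the theorem.
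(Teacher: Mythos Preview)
Your argument is correct and follows essentially the same route as the paper's own proof: induction on weight, Lemma~\ref{l4.10} for the spanning set, the Zhu relation $[\omega^{ij}_{-p-2}v+2\omega^{ij}_{-p-1}v+\omega^{ij}_{-p}v]=0$ (which is equivalent to your residue identity) to reduce to $n_1\in\{-1,0\}$, the commutator formula to handle $n_1=0$, and the product formula to handle $n_1=-1$. The only cosmetic differences are that the paper treats $n_1=0$ before $n_1=-1$ and phrases the $n_1\leq -2$ reduction via the three-term recursion rather than your single residue expansion.
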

\pf
Let $A'$ be the subalgebra of $A(M^{(n)})$ generated by $[\omega^{ij}]$, $0\leq i<j\leq n$. It suffices to show that for every homogeneous $u\in M^{(n)}$, $[u]\in A'$. We will approach it by induction on the weight of $u$. If $wt(u)=2$, it is obvious that $[u]\in A'$. Suppose that for all homogeneous $u$ such that $wt(u)\leq m-1$, we have $[u]\in A'$. Now suppose $wt(u)=m$. By Lemma \ref{l4.10}, we may assume that
$$
u=\omega^{i_{1}j_{1}}_{n_{1}}\omega^{i_{2}j_{2}}_{n_{2}}\cdots \omega^{i_{s}j_{s}}_{n_{s}}{\bf 1},
$$
for some $n_{k}\leq 0$ and $0\leq i_{k}<j_{k}\leq n$, $k=1,2,\cdots,s$. Since for $p\geq 0$, $[\omega^{i_{1}j_{1}}_{-p-2}+2\omega^{i_{1}j_{1}}_{-p-1}+\omega^{i_{1}j_{1}}_{-p})
\omega^{i_{2}j_{2}}_{n_{2}}\cdots \omega^{i_{s}j_{s}}_{n_{s}}{\bf 1}]=[0]$, we may assume that $-1\leq n_{1}\leq 0$. Denote $u^2=\omega^{i_{2}j_{2}}_{n_{2}}\cdots \omega^{i_{s}j_{s}}_{n_{s}}{\bf 1}$. Then by inductive assumption, $[u^2], [\omega^{i_{1}j_{1}}_{1}u^2]\in A'$.  Since
$$
[(\omega^{i_{1}j_{1}}_{0}+\omega^{i_{1}j_{1}}_{1})u^2]=[\omega^{i_{1}j_{1}}]*[u^2]-[u^2]*[\omega^{i_{1}j_{1}}],$$
it follows that $[\omega^{i_{1}j_{1}}_{0}u^2]\in A'$. Then by the fact that
$$[\omega^{i_{1}j_{1}}]*[u^2]=[(\omega^{i_{1}j_{1}}_{-1}+2\omega^{i_{1}j_{1}}_{0}+\omega^{i_{1}j_{1}}_{1})u^2],$$
we have $[u]\in A'$. Therefore $A(M^{(n)})=A'$.
\qed

\section{Classification of irreducible modules  of $M^{(n)}$}
\def\theequation{4.\arabic{equation}}
\setcounter{equation}{0}

In this section, we will  classify all irreducible modules of $M^{(n)}$. It turns out all the inequivalent irreducible modules of $M^{(n)}$ are the ones given in \cite{LS}.

For $N\geq 1$, let $S_{N}$ be the $N$-symmetric group generated by the transpositions $s_{i}=(i,i+1)$, $ i=1,2,\cdots,N-1$. Denote $s_{i,j}=(i,j)$.  Let $\C[S_{N}]$ be the group algebra of $S_{N}$. For $N\geq 3$,  let $J^{N}$ be  the two-sided ideal of $\C[S_{N}]$ generated by $(s_{i}+s_{i+1}+s_{i+1}s_{i}s_{i+1})(s_{i}+s_{i+1}+s_{i+1}s_{i}s_{i+1}-3), 1\leq i\leq N-1$. If $N=1,2$, let $J^{N}=0$. Define
$$
T^{N}=\C[S_{N}]/J^{N}.
$$

It is known that $\C[S_{N}]$ is semi-simple and the number of  minimal ideals of $\C[S_{N}]$ equals the number of partitions of $N$ ( see \cite{Ja}). Let $\lambda=(\lambda_{1},\cdots,\lambda_{k})$ be a partition of $N$  with $\lambda_{1}\geq \lambda_{2}\geq\cdots\geq \lambda_{k}$.  We still use
$\lambda$ to denote the associated Young Diagram, which is a frame with $\lambda_{i}$ boxes in the $i$-th row and the rows of boxes lined up on the left. A $\lambda$-tableau $\bft$ is the Young diagram with each box filled with a number from $\{ 1, 2, \cdots, N\}$ without any repetition. Given a tableau $\bf t$, we define two subgroups of $S_{N}$ as follows:
$$
R_{\bft}=\{\sigma\in S_{N}|\ \sigma \ \text{permutes the numbers in each row of $\bft$}\}
$$
and
$$
C_{\bft}=\{\sigma\in S_{N}| \ \sigma \ \text{permutes the  numbers  in each  column of $\bft$}\}.
$$
Introduce two elements corresponding to the two subgroups as follows:
$$
a_{\bft}=\sum\limits_{\sigma\in R_{\bft}}\sigma, \ b_{\lambda}=\sum\limits_{\sigma\in C_{\bft}}sgn(\sigma)\sigma.
$$
Here the sign $sgn(\sigma)$ of a permutation $\sigma$ is defined to be $1$ if $\sigma$ is even and $-1$ if $\sigma$ is odd. Set
$$
c_{\bft}=b_{\bft}a_{\bft}.
$$
The following result is well known (see \cite{Ja}). For each fixed
$\lambda$-tableau $ \bft$, the left ideal  $ \C[S_N] a_\bft$ is isomorphic
to the Young submodule $ M^\lambda$ as described in \cite[4.2]{Ja} (we
consider left $\C[S_N]$-modules here). The Specht module (thus simple
module) $ S^\lambda$ is isomorphic to a left ideal generated by
$ b_\bft a_{\bft} $ ({\cite[4.5]{Ja}}).
Since $\C[S_{N}]$ is semi simple, it follows that
\[ \C[S_{N}]=\oplus _{\lambda} I^\lambda.\]
Here $ I^\lambda$ is the minimal two-sided ideal of $ \C[S_N]$ containing a left ideal which is isomorphic to the simple module $ S^\lambda$, by using
\begin{lem}{\cite[4.5]{Ja}} \label{sym1} For any $\lambda$-tableau $\bft$,
$c_{\bft}$ generates the  minimal ideal $I^\lambda$ of $\C[S_{N}]$.
\end{lem}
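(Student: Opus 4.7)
The plan is to reduce the statement to two classical facts: that $c_\bft$ lies in $I^\lambda$, and that $I^\lambda$ is a simple algebra (over $\C$), so any nonzero element of $I^\lambda$ must generate it as a two-sided ideal.

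First I would check that $c_\bft \in I^\lambda$. By the Specht module description cited just above the lemma, the left ideal $\C[S_N] c_\bft = \C[S_N] b_\bft a_\bft$ is isomorphic (up to a nonzero scalar in the identification) to the simple module $S^\lambda$. Hence $\C[S_N] c_\bft$ is a simple left submodule of $\C[S_N]$ belonging to the isotypic component indexed by $\lambda$, and by definition of $I^\lambda$ as the minimal two-sided ideal containing a copy of $S^\lambda$ we have $\C[S_N] c_\bft \subseteq I^\lambda$. In particular $c_\bft \in I^\lambda$. This also shows $c_\bft \neq 0$, since $S^\lambda \neq 0$.

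Next I would invoke the Wedderburn decomposition of $\C[S_N]$. Since $\C[S_N]$ is semisimple and $\C$ is algebraically closed, we have a canonical isomorphism
\[
\C[S_N] \;\cong\; \bigoplus_\mu M_{d_\mu}(\C),
\]
where the sum runs over partitions $\mu$ of $N$ and $d_\mu = \dim S^\mu$. Under this isomorphism, $I^\lambda$ corresponds to the single matrix factor $M_{d_\lambda}(\C)$, which is a simple $\C$-algebra and therefore contains no nonzero proper two-sided ideal. Hence the two-sided ideal generated (inside $\C[S_N]$, equivalently inside $I^\lambda$) by any nonzero element of $I^\lambda$ equals all of $I^\lambda$.

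Combining the two steps: $c_\bft$ is a nonzero element of the simple two-sided ideal $I^\lambda$, so the two-sided ideal of $\C[S_N]$ generated by $c_\bft$ is exactly $I^\lambda$, as claimed. The main (and only) subtle point is matching the Specht-module normalization from James so that one can assert $\C[S_N] c_\bft \neq 0$ without additional computation; everything else is a direct application of Wedderburn, so I do not expect any real obstacle.
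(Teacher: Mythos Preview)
The paper does not supply a proof of this lemma; it is quoted from \cite[4.5]{Ja} as a known fact, preceded by the remarks that $\C[S_N]b_{\bft}a_{\bft}\cong S^\lambda$ and that $\C[S_N]=\oplus_\lambda I^\lambda$. Your argument is correct and is precisely the standard justification one would give: $c_{\bft}$ is a nonzero element of the isotypic block $I^\lambda\cong M_{d_\lambda}(\C)$, and a simple matrix algebra has no proper nonzero two-sided ideals, so $c_{\bft}$ generates $I^\lambda$.
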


We note that $S_N$ is a Coxeter group with generating set $\{ s_1, \cdots, s_{N-1}\}$.  We now fix a (standard) $\lambda$-tableau $ \bft^\lambda$ for which the numbers $\{1, 2, \cdots, N\}$ are placed into the Young diagram $ \lambda$ in the order of top-down in each column starting from the first column. Then the subgroup $ C_{\bft^\lambda}$ is a standard parabolic subgroup of $S_N$.  Note that $ C_{\bft^\lambda}$ is also a Coxeter group.

 For any Coxeter group $(W, S)$, each element $\sigma\in W$ can be written as a product of a minimal number of generating elements in $S$. We denote by $\ell(\sigma)$ the minimal length.

 For any parabolic subgroup $W_I $  generated by elements in $ I \subseteq S$, let $ W^I$ be the minimal coset representatives such that each element $ w \in W$ is of the form $ \tau \rho$ with $ \tau \in W^I$, $ \rho \in W_I$ and $ \ell(\sigma)=\ell(\tau)+\ell(\rho)$. If $W$ is finite, then we have in $\C[W]$,
\begin{equation} \label{product}
\sum_{w\in W} (-1)^{\ell(w)}w=(\sum_{\tau \in W^I}(-1)^{\ell(\tau)}\tau) (\sum_{\rho \in W_I}(-1)^{\ell(\rho)}\rho).
\end{equation}

Recall that for any partition  $\lambda=(\lambda_{1}\geq\cdots\geq\lambda_{k}>0)$, $k=l(\lambda)$ is called the length or  the number of parts of $ \lambda$.
\begin{lem}\label{sym2} If $\lambda$ has at least three parts,  then $I^{\lambda}\subseteq J^{N}$. In particular, $J^N\supseteq \oplus_{\lambda, l(\lambda)\geq 3} I^\lambda $.
\end{lem}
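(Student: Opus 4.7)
The plan is to identify each generator of $J^N$ with a scalar multiple of the antisymmetrizer on three consecutive letters, deduce that $J^N$ contains the antisymmetrizer of every three-element subset of $\{1,\ldots,N\}$, and then show that whenever $l(\la) \ge 3$ the column antisymmetrizer $b_\bft$ factors through such a three-element antisymmetrizer. Granting this, $b_\bft \in J^N$ forces $c_\bft = b_\bft a_\bft \in J^N$, so by Lemma~\ref{sym1} we conclude $I^\la \subseteq J^N$; the ``in particular'' statement then follows from the semisimple decomposition $\C[S_N] = \oplus_\la I^\la$.

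For the key computation I would work inside the standard parabolic subgroup $\<s_i,s_{i+1}\>\cong S_3$ of $S_N$. Setting $t_i = s_i + s_{i+1} + s_{i+1}s_is_{i+1}$ and using the braid relation $s_{i+1}s_is_{i+1} = s_is_{i+1}s_i$, a direct expansion gives $t_i^2 = 3 + 3(s_is_{i+1} + s_{i+1}s_i)$, so that
$$
t_i(t_i-3) \;=\; 3\bigl(1 - s_i - s_{i+1} + s_is_{i+1} + s_{i+1}s_i - s_{i+1}s_is_{i+1}\bigr) \;=\; 3\, b_{\{i,i+1,i+2\}},
$$
where for a subset $S$ we write $b_S = \sum_{\sigma}\mathrm{sgn}(\sigma)\sigma$ for the antisymmetrizer over the symmetric group on $S$. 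Because $J^N$ is two-sided and $w\,b_S\,w^{-1} = b_{w(S)}$ for every $w \in S_N$, conjugation then shows $b_S \in J^N$ for every three-element subset $S \subseteq \{1,\ldots,N\}$.

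Now suppose $l(\la) \ge 3$ and fix a $\la$-tableau $\bft$; let $i_1 < i_2 < \cdots < i_k$ be the entries of its first column ($k \ge 3$). The column permutation group $C := S_{\{i_1,\ldots,i_k\}}$ is itself a Coxeter group with simple reflections the adjacent transpositions $(i_1 i_2), (i_2 i_3), \ldots, (i_{k-1}i_k)$, and $S_{\{i_1,i_2,i_3\}}$ is a standard parabolic subgroup. Applying the factorization (\ref{product}) inside $C$, and noting that Coxeter length parity in $C$ coincides with permutation sign, yields $b_{\mathrm{col}_1} = A\cdot b_{\{i_1,i_2,i_3\}}$ for some $A \in \C[C]$, where $b_{\mathrm{col}_1}$ denotes the antisymmetrizer on the first column. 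By the previous paragraph $b_{\{i_1,i_2,i_3\}} \in J^N$, so $b_{\mathrm{col}_1} \in J^N$. Because the column antisymmetrizers commute pairwise (their supports are disjoint) and $b_\bft = \prod_{j\ge 1} b_{\mathrm{col}_j}$, we obtain $b_\bft \in J^N$, completing the argument.

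The main obstacle is the explicit identification $t_i(t_i-3) = 3\, b_{\{i,i+1,i+2\}}$; once this is in hand, everything else is standard bookkeeping about Young symmetrizers, conjugation inside a two-sided ideal, and Coxeter coset factorizations. A minor technical point is that (\ref{product}) is applied to the non-standard Coxeter group $C$ rather than to $(S_N, \{s_1,\ldots,s_{N-1}\})$, but this causes no trouble because the sign character $(-1)^{\ell(\cdot)} = \mathrm{sgn}$ is intrinsic to the symmetric group and does not depend on the ambient choice of simple reflections.
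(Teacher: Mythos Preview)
Your proof is correct and follows essentially the same route as the paper: compute that each generator of $J^N$ is three times a three-element antisymmetrizer, then use the Coxeter coset factorization \eqref{product} to exhibit $b_\bft$ as a left multiple of such an antisymmetrizer, whence $c_\bft\in J^N$ and Lemma~\ref{sym1} finishes. The only cosmetic difference is that the paper fixes the specific standard tableau $\bft^\lambda$ (filled top-down by columns) so that $\{1,2,3\}$ already sits in the first column and $C_{\bft^\mu}=\langle s_1,s_2\rangle$ is literally a standard parabolic of $C_{\bft^\lambda}$; this lets them apply \eqref{product} once to all of $C_{\bft^\lambda}$ and avoids both your conjugation step and the separate product over columns.
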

\pf Since $k\geq 3$, then $\{ 1, 2, 3\}$ appears in the first column of $ \bft^\lambda$. Let $ \mu=(N-2,1,1)$ a partition of $ N$ with exactly three parts. The standard tableau $\bft^\mu$ has $\{1,2,3\}$ in the first column and the rest appearing in the first row. Note that
\[C_{\bft^\mu}=S_{3}\times 1\times \cdots \times 1\]
is a parabolic subgroup of $ C_{\bft^{\lambda}}$.  Applying \eqref{product} to the pair $W=C_{\bft^{\lambda}}$ and $ W_I=C_{\bft^\mu}$ we see that $ b_{\bft^{\lambda}}=d b_{\bft^{\mu}}$ for some $ d \in \C[C_{\bft^\lambda}]$.
This shows that  $c_{\bft^{\lambda}}$ is contained in the 2-sided ideal $\langle b_{\bft^{\mu}} \rangle $ in $ \C[S_N]$.  Now the lemma will follow if we show that $ b_{\bft^\mu}\in J^N$. In fact, let $e=s_1+s_2+s_1s_2 s_1=(12)+(23)+(13)$. Here we are writing the element in $S_N$ using cycle decompositions. A direct computation shows that
\[ e(e-3)=3[1+(123)+(132)-(12)-(23)-(13)]=3 b_{\bft^\mu}.\]
Thus $ b_{\bft^\mu} \in J^N$.
\qed

\begin{prop}\label{coro1}
For $N=2M$ or $2M+1$,  $\C[S_{N}]/J^{N}$ has at most  $M+1$ inequivalent irreducible modules.
\end{prop}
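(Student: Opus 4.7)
The plan is to use the semi-simple decomposition $\C[S_N]=\bigoplus_\lambda I^\lambda$ of the group algebra over partitions $\lambda$ of $N$ (recalled in the text just before Lemma~\ref{sym1}) together with Lemma~\ref{sym2}, and then to count partitions of $N$ with at most two parts. Since $\C[S_N]$ is semi-simple, any two-sided ideal is a sum of some collection of the minimal two-sided ideals $I^\lambda$, and hence the quotient $\C[S_N]/J^N$ is again semi-simple, with isomorphism classes of irreducible modules in bijection with the set $\Lambda_N=\{\lambda\vdash N:I^\lambda\not\subseteq J^N\}$.

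First I would observe that by Lemma~\ref{sym2}, any $\lambda$ with $l(\lambda)\geq 3$ satisfies $I^\lambda\subseteq J^N$, so $\Lambda_N$ is contained in the set of partitions of $N$ with at most two parts. This already gives the desired upper bound once the counting is done; we do not need to decide here which of the two-part partitions actually survive in the quotient (that would give an equality rather than an inequality).

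Next I would simply enumerate partitions of $N$ with at most two parts. These are of the form $(N-j,j)$ with $0\leq j$ and $N-j\geq j$, i.e.\ $0\leq j\leq \lfloor N/2\rfloor$. If $N=2M$ then $j\in\{0,1,\ldots,M\}$, giving $M+1$ partitions; if $N=2M+1$ then $j\in\{0,1,\ldots,M\}$, again giving $M+1$ partitions. Hence
\[
|\Lambda_N|\leq \#\{\lambda\vdash N : l(\lambda)\leq 2\}=M+1,
\]
so $\C[S_N]/J^N$ has at most $M+1$ inequivalent irreducible modules.

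There is no genuine obstacle here; the proposition is a direct bookkeeping consequence of semi-simplicity of $\C[S_N]$ and the containment established in Lemma~\ref{sym2}. The only mild subtlety is to make sure the two small cases $N=1,2$ (where $J^N=0$ by definition) are consistent with the bound, which they are: $N=1$ gives $M=0$ and one partition $(1)$; $N=2$ gives $M=1$ and the two partitions $(2),(1,1)$, matching $M+1=2$ in each case.
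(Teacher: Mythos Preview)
Your proof is correct and follows essentially the same route as the paper: apply Lemma~\ref{sym2} to conclude that only $S^\lambda$ with $l(\lambda)\leq 2$ can survive in $\C[S_N]/J^N$, and then count such partitions as $\lambda_2\in\{0,1,\ldots,\lfloor N/2\rfloor\}$ to get $M+1$. Your version simply spells out the semi-simplicity bookkeeping and the small cases $N=1,2$ in more detail than the paper does.
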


\begin{proof} By the lemma above, the only possible irreducible representations of $ \C[S_{N}]/J^{N}$ are those $ S^\lambda$ with $\lambda=(\lambda_1\geq \lambda_2)$,  which is determined by $ \lambda_2=0, \cdots, [\frac{N}{2}]=M$.
\end{proof}

The following lemma follows from the rationality of the Virasoro vertex operator algebra $L(\frac{1}{2},0)$ and the fact that $L(\frac{1}{2},0)$ has three inequivalent irreducible modules
$L(\frac{1}{2},0)$, $L(\frac{1}{2},\frac{1}{2})$ and $L(\frac{1}{2},\frac{1}{16})$.
\begin{lem}\label{Vira-semi}
Let $W$ be an $A(M^{(n)})$-module, then for $0\leq i<j\leq n$, $[\omega^{ij}]([\omega^{ij}]-\frac{1}{2})([\omega^{ij}]-\frac{1}{16})=0$ on $W$.
\end{lem}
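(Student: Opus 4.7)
The plan is to prove the stronger statement that the cubic polynomial identity
$[\omega^{ij}]([\omega^{ij}]-\tfrac{1}{2})([\omega^{ij}]-\tfrac{1}{16})=0$ holds inside
the Zhu algebra $A(M^{(n)})$ itself; the action on an arbitrary $A(M^{(n)})$-module $W$
then follows trivially. The route goes through identifying a copy of the simple Virasoro
VOA $L(\tfrac{1}{2},0)$ sitting inside $M^{(n)}$ and transporting the known Zhu algebra
relation to $A(M^{(n)})$.

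First I would show that the sub-VOA $L^{(ij)}(\tfrac{1}{2},0)\subseteq M^{(n)}$
generated by $\omega^{ij}$ is isomorphic to the simple rational Virasoro VOA
$L(\tfrac{1}{2},0)$. By Lemma \ref{l3.1}(1), $\omega^{ij}$ is a conformal vector of
central charge $\tfrac{1}{2}$, so $\langle\omega^{ij}\rangle$ is a quotient of the
universal Virasoro VOA $V(\tfrac{1}{2},0)$. Since $M^{(n)}$ is simple and carries the
standard invariant bilinear form inherited from $V_{A_{1}^{n+1}}$, the level--$6$
singular vector of $V(\tfrac{1}{2},0)$ pairs trivially with everything and must vanish,
giving $L^{(ij)}(\tfrac{1}{2},0)\cong L(\tfrac{1}{2},0)$. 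This is consistent with the
notation already used in the proof of Lemma \ref{l4.10}.

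Second I would verify that the two weight gradings coincide on this sub-VOA, so that the
inclusion actually induces an algebra homomorphism
$\varphi\colon A(L^{(ij)}(\tfrac{1}{2},0))\to A(M^{(n)})$. Because $\omega-\omega^{ij}$
lies in the commutant of $L^{(ij)}(\tfrac{1}{2},0)$ and annihilates $\mathbf{1}$, the
operator $\omega_{1}-\omega^{ij}_{1}$ vanishes on every element
$\omega^{ij}_{-n_{1}}\cdots\omega^{ij}_{-n_{k}}\mathbf{1}$; thus the $\omega$-weight of a
vector in $L^{(ij)}(\tfrac{1}{2},0)$ equals its $\omega^{ij}$-weight. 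Consequently the
defining expressions $u\circ v=\mathrm{Res}_{z}\frac{(1+z)^{\mathrm{wt}\,u}}{z^{2}}Y(u,z)v$
and $u\ast v$ for $u,v\in L^{(ij)}(\tfrac{1}{2},0)$ are computed identically inside both
VOAs, so $O(L^{(ij)}(\tfrac{1}{2},0))\subseteq O(M^{(n)})\cap L^{(ij)}(\tfrac{1}{2},0)$
and $\varphi$ is a well-defined ring homomorphism with
$\varphi([\omega^{ij}])=[\omega^{ij}]$.

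Finally, because $L(\tfrac{1}{2},0)$ is rational with exactly the three inequivalent
irreducible modules $L(\tfrac{1}{2},h)$ for $h\in\{0,\tfrac{1}{2},\tfrac{1}{16}\}$, and
since irreducible $A(L(\tfrac{1}{2},0))$-modules correspond bijectively to lowest weights
of irreducible $L(\tfrac{1}{2},0)$-modules, the Zhu algebra is forced to be the
three-dimensional semisimple algebra
$A(L(\tfrac{1}{2},0))\cong\mathbb{C}[x]/(x(x-\tfrac{1}{2})(x-\tfrac{1}{16}))$, with
$[\omega^{ij}]$ corresponding to $x$. Applying $\varphi$ to this relation yields
$[\omega^{ij}]\ast([\omega^{ij}]-\tfrac{1}{2})\ast([\omega^{ij}]-\tfrac{1}{16})=0$ in
$A(M^{(n)})$, hence in particular on $W$. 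The one place requiring care is the
identification $\langle\omega^{ij}\rangle\cong L(\tfrac{1}{2},0)$ in the first step;
everything else is essentially formal once the gradings are checked to match.
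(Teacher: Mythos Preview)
Your proposal is correct and is essentially the same argument the paper has in mind: the one-line justification in the paper (``follows from the rationality of $L(\tfrac{1}{2},0)$ and the fact that $L(\tfrac{1}{2},0)$ has three inequivalent irreducible modules'') is exactly the statement that $A(L(\tfrac{1}{2},0))\cong\mathbb{C}[x]/\bigl(x(x-\tfrac{1}{2})(x-\tfrac{1}{16})\bigr)$, pushed into $A(M^{(n)})$ via the Zhu algebra homomorphism induced by the sub-VOA inclusion $L^{(ij)}(\tfrac{1}{2},0)\hookrightarrow M^{(n)}$. You have simply made explicit the two ingredients the paper takes for granted, namely that $\langle\omega^{ij}\rangle$ is the \emph{simple} quotient $L(\tfrac{1}{2},0)$ and that the gradings agree so the Zhu map is well defined.
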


For $\delta\in\Z_{2}^{n}$, let $M^{\delta}(k)$, $0\leq k\leq n+1$ with $|\delta'|-k\in 2\Z
$ be the irreducible modules of $M^{(n)}$ given in the above section ( also see \cite{LS}). We denote by $W^{\delta}(k)$ the irreducible module of the Zhu algebra $A(M^{(n)})$ associated to $M^{\delta}(k)$. Recall from (\ref{de}) that $M^{\delta}(k)=M^{\delta'}(k)$. So we also denote $W^{\delta'}(k)=W^{\delta}(k)$.

By \cite{KLY} ( see also \cite{KMY} and \cite{DLMN}), for $n=3$, $M^{(3)}$ is isomorphic to $V_{\sqrt{2}A_{2}}^{+}$. Then  by \cite{DJL} and \cite {ADL}, $M^{(3)}$ is rational and has 20 inequivalent irreducible modules which are  given through $M^{\delta'}(k)$. Furthermore, we have
\begin{lem}\label{module-2} Let $n=3$. Then

(1) If $\delta'=(0,0,0,0)$, then on $W^{\delta'}(k)$ with $k=0,2,4$ we have
$$
[\omega^{ij}]([\omega^{ij}]-\frac{1}{2})=0,$$
for $0\leq i<j\leq 3$.

(2) If $\delta'=(a_{0},\cdots,a_{3})\in\Z_{2}^{4}$ with $|\delta|=1$ and $a_{r}=1$ for some $0\leq r\leq 3$, then on $W^{\delta'}(k)$ with $k=1,3$, we have
$$[\omega^{ri}]=\frac{1}{16}, \ [\omega^{ij}]([\omega^{ij}]-\frac{1}{2})=0,$$
for $0\leq i\neq j\leq 3$ with $i\neq r\neq j$.

(3) If $\delta'=(a_{0},\cdots,a_{3})\in\Z_{2}^{4}$ with $|\delta'|=2$ and $a_{i}=a_{j}=1$ for some $0\leq i<j\leq 3$. Then we have

(a) \ $W^{\delta'}(0)$ is one-dimensional  with basis $e^{\frac{\al^i-\al^j}{2}}-e^{\frac{-\al^i+\al^j}{2}}$ and  on $W^{\delta'}(0)$, we have
$$[\omega^{ij}]=\frac{1}{2}, \ [\omega^{kl}]=0, \ [\omega^{pq}]=\frac{1}{16}; \
$$

(b) \ $W^{\delta'}(2)$ is one-dimensional  with basis $e^{\frac{\al^i+\al^j}{2}}+e^{\frac{-\al^i-\al^j}{2}}$ and  on $W^{\delta'}(2)$, we have
$$[\omega^{ij}]=0, \ [\omega^{kl}]=0, \ [\omega^{pq}]=\frac{1}{16}; \
$$

(c) \ $W^{\delta'}(4)$ is one-dimensional  with basis $(e^{\frac{\al^i+\al^j}{2}+\al^k}+e^{\frac{-\al^i-\al^j}{2}-\al^k})-(e^{\frac{\al^i+\al^j}{2}+\al^l}+e^{\frac{-\al^i-\al^j}{2}-\al^l})$ and  on $W^{\delta'}(4)$, we have
$$[\omega^{ij}]=0, \ [\omega^{kl}]=\frac{1}{2}, \ [\omega^{pq}]=\frac{1}{16};
$$
where $k,l\in\{0,1,2,3\}\setminus\{i,j\}$ with $k\neq l$, $p\in\{i,j\}, q\in\{k,l\}$.
\end{lem}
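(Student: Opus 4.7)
The plan is to proceed by explicit computation in the lattice vertex operator algebra $V_{A_1^4}$ and its cosets $V_{\gamma_{\delta'}+A_1^4}$, combining the formula
$$\omega^{ij}=\frac{1}{16}(\al^i-\al^j)(-1)^2{\bf 1}-\frac{1}{4}(e^{\al^i-\al^j}+e^{-\al^i+\al^j})$$
with Lemma \ref{Vira-semi}, which confines each eigenvalue of $[\omega^{ij}]$ to the set $\{0,\frac{1}{2},\frac{1}{16}\}$. Since $M^{(3)}\cong V_{\sqrt{2}A_2}^+$ is rational with exactly the $20$ inequivalent irreducible modules $M^{\delta'}(k)$ (as recalled from \cite{KLY}, \cite{KMY}, \cite{DLMN} and \cite{ADL}), the task reduces to identifying which of the three eigenvalues $[\omega^{ij}]$ takes on each lowest weight space $W^{\delta'}(k)$.

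The key structural observation I would use is that $[\omega^{ij}]$ takes the eigenvalue $\frac{1}{16}$ on $W^{\delta'}(k)$ if and only if $\langle \al^i-\al^j,\gamma_{\delta'}\rangle$ is odd, i.e., exactly one of $a_i,a_j$ equals $1$. This is because the Virasoro subalgebra generated by $\omega^{ij}$ inside $V_{A_1^{n+1}}$ is the Ising component of the rank-one sub-lattice VOA $V_{\Z(\al^i-\al^j)/\sqrt{2}}$, whose $L(\frac{1}{2},\frac{1}{16})$-summand arises precisely from the $\Z_2$-twisted sector. Applied to cases (1) and (2), this immediately gives $[\omega^{ij}]([\omega^{ij}]-\frac{1}{2})=0$ on $W^{\delta'}(k)$ whenever $a_i+a_j$ is even; and in case (2) with the distinguished pair $(r,i)$ satisfying $a_r=1$, $a_i=0$, it forces $[\omega^{ri}]=\frac{1}{16}$. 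I would sanity-check the latter directly: on the prototypical lowest weight vector $e^{\al^r/2}$, the Heisenberg piece of $\omega^{ri}_1$ contributes $\langle \al^r-\al^i,\al^r/2\rangle^2/16=\frac{1}{16}$, while the lattice piece annihilates $e^{\al^r/2}$ because $(e^{\pm(\al^r-\al^i)})_1 e^{\al^r/2}=0$ on degree grounds.

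For case (3), where $|\delta'|=2$ with $a_i=a_j=1$, I would verify the three subcases by direct computation on the explicit basis vectors presented. First, each proposed vector lies in the coset $(\al^i+\al^j)/2+A_1^4$, which is a routine lattice membership check. Second, each is an $\widehat{\frak{sl}}_2(4,0)$-highest-weight vector of the correct weight $k$: the $\pm$ signs in the given bases are chosen precisely so that the diagonal Chevalley generators annihilate the vectors in the pattern that selects the summand $L_{\wg}(4,k)\otimes M^{\delta'}(k)$ of $V_{\gamma_{\delta'}+A_1^4}$. Third, each $\omega^{pq}_1$-action will be computed by combining the Heisenberg contribution $\langle\al^p-\al^q,\gamma\rangle^2/16$ on every summand $e^\gamma$ with the lattice contribution $-\frac{1}{4}(e^{\al^p-\al^q}+e^{-\al^p+\al^q})_1$, which produces cross-terms that combine with the $\pm$ signs in the basis vector to yield the stated eigenvalues $0$, $\frac{1}{2}$, $\frac{1}{16}$.

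The main obstacle will be the careful bookkeeping in case (3): ensuring that the 2-cocycle signs in the lattice vertex operators combine consistently so that the lattice part of $\omega^{pq}_1$ produces the stated sign relative to the Heisenberg part, and verifying the $\widehat{\frak{sl}}_2$-singular vector condition for the specific $\pm$ combinations presented. These are routine but tedious applications of the lattice vertex operator calculus; happily, the structural argument above handles cases (1) and (2) without any such bookkeeping.
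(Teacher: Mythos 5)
The paper states this lemma without a written proof (it is offered as a direct check in the explicit lattice realization, relying on the classification of irreducible $M^{(3)}\cong V_{\sqrt{2}A_{2}}^{+}$-modules), and your plan of verifying it by explicit computation in $V_{A_1^4}$ and its cosets, with Lemma \ref{Vira-semi} confining the eigenvalues, is exactly the natural way to carry this out; your parity criterion (eigenvalue $\frac1{16}$ precisely when $a_i\neq a_j$) is correct and your sample computations are right. Two points, however, need repair before this is a complete proof. First, your justification of the parity criterion is misstated: $(\al^i-\al^j)/\sqrt2$ does not lie in $A_1^{n+1}$, so there is no sub-VOA $V_{\Z(\al^i-\al^j)/\sqrt2}$ inside $V_{A_1^{n+1}}$, and the relevant $L(\frac12,\frac1{16})$'s are not twisted sectors. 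The correct carrier is the rank-one lattice VOA $V_{\Z(\al^i-\al^j)}\cong V_{\sqrt2A_1}$ containing $\omega^{ij}$: decomposing $V_{\gamma_{\delta'}+A_1^{n+1}}$ along $\Z(\al^i-\al^j)\oplus(\al^i-\al^j)^{\perp}$, every summand in the $(\al^i-\al^j)$-direction is an untwisted module $V_{\Z(\al^i-\al^j)+\frac{m}{4}(\al^i-\al^j)}$ with $m\equiv a_i-a_j\ (\mathrm{mod}\ 2)$, and only for odd $m$ is this of type $L(\frac12,\frac1{16})\otimes L(\frac12,\frac1{16})$. Combining the resulting spectrum of $\omega^{ij}_1$ (contained in $\frac1{16}+\Z_{\geq0}$, resp. $\Z_{\geq0}\cup(\frac12+\Z_{\geq0})$) with Lemma \ref{Vira-semi} then gives (1) and (2); as written, your argument gestures at this but does not actually pin it down.

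Second, in part (3) the statement asserts more than eigenvalues: it claims each $W^{\delta'}(k)$ is one-dimensional with the exhibited vector as basis. Checking coset membership, the singular-vector property and the $\omega^{pq}_1$-eigenvalues on the given vectors does not show these vectors exhaust the lowest weight spaces. You need an additional counting step, e.g.: for $\gamma_{\delta'}=\frac12(\al^i+\al^j)$ the conformal-weight-$\frac12$ subspace of $V_{\gamma_{\delta'}+A_1^4}$ is spanned by the four vectors $e^{\frac{\pm\al^i\pm\al^j}{2}}$ with $h(0)$-weights $2,0,0,-2$, and comparing with the decomposition $\oplus_{k}L_{\wg}(4,k)\otimes M^{\delta'}(k)$ forces $\dim W^{\delta'}(0)=\dim W^{\delta'}(2)=1$ (and a similar count at conformal weight $\frac32$ handles $W^{\delta'}(4)$); alternatively one can quote the decompositions of \cite{LS}, \cite{LY}. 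Note also that the displayed "bases" in (3)(b),(c) are sums of the two extreme $h(0)$-weight vectors of $L_{\wg}(4,k)$ tensored with $W^{\delta'}(k)$ rather than $\wg$-weight vectors; this is harmless for your eigenvalue computation only because $\omega^{pq}_1$ commutes with the $\wg$-action, a fact you should say explicitly. With these two additions (and the cocycle bookkeeping you already flag), your argument goes through.
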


More generally we have
\begin{lem}\label{module-1} (1) If $\delta'=(0,\cdots,0)\in \Z_{2}^{n+1}$, $0\leq 2k\leq n+1$ and $0\leq i<j\leq n$, then on $W^{\delta'}(2k)$,
$$
[\omega^{ij}]([\omega^{ij}]-\frac{1}{2})=0.
$$
(2) If $\delta'=(a_{0},a_{1},\cdots,a_{n})\in\Z_{2}^{n+1}$ such that $a_{i_{1}}=a_{i_{2}}=\cdots =a_{i_N}=1$ and $|\delta'|=N$ for some $1\leq N\leq \frac{n+1}{2}$ and $0\leq i_{1}<\cdots <i_{N}\leq n$. Then on $W^{\delta'}(N-2k)$ with  $0\leq N-2k\le N$,
$$
[\omega^{ij}]=\frac{1}{16},  \ [\omega^{pq}]([\omega^{pq}]-\frac{1}{2})=0, \ [\omega^{kl}]=0,
$$
and on $W^{\delta'}(N+2k)$ with $N<N+2k\leq n+1$,
$$
[\omega^{ij}]=\frac{1}{16}, \ [\omega^{pq}]=0, \ [\omega^{kl}]([\omega^{kl}]-\frac{1}{2})=0,
$$
where $i\in\{i_{1},i_{2},\cdots,i_{N}\}$, $j\in \{0,1,\cdots,n\}\setminus \{i_{1},i_{2},\cdots,i_{N}\}$, $p,q\in\{i_{1},\cdots,i_{N}\}$ with $p\neq q$,  $k,l\in \{0,1,\cdots,n\}\setminus \{i_{1},\cdots,i_{N}\}$ with $k\neq l$.
\end{lem}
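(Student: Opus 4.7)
The plan is to realize each lowest weight space $W^{\delta'}(l)$ concretely inside the lattice VOA $V_{\gamma_{\delta'}+A_{1}^{n+1}}$ via the decomposition $V_{\gamma_{\delta'}+A_{1}^{n+1}}=\bigoplus_{l}L_{\wg}(n+1,l)\otimes M^{\delta'}(l)$, and to compute $\omega^{ij}_{1}$ on these vectors using the free-field formula
\[\omega^{ij}=\tfrac{1}{16}(\alpha^{i}-\alpha^{j})(-1)^{2}{\bf 1}-\tfrac{1}{4}\bigl(e^{\alpha^{i}-\alpha^{j}}+e^{-(\alpha^{i}-\alpha^{j})}\bigr).\]

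For any $\mu=\gamma_{\delta'}+\sum c_{p}\alpha^{p}$ with $c_{p}\in\Z$, the pairing $(\alpha^{i}-\alpha^{j},\mu)=(a_{i}-a_{j})+2(c_{i}-c_{j})$ is odd if and only if $a_{i}\ne a_{j}$. Under the Ising subalgebra $L^{(ij)}(1/2,0)$ generated by $\omega^{ij}$, this parity determines the module type: $e^{\mu}$ lies in an $L(1/2,1/16)$ summand iff the pairing is odd, and otherwise in an $L(1/2,0)\oplus L(1/2,1/2)$ summand. Combined with Lemma~\ref{Vira-semi}, which forces the eigenvalues of $[\omega^{ij}]$ on $W^{\delta'}(l)$ into $\{0,1/2,1/16\}$, this yields $[\omega^{ij}]=1/16$ whenever $a_{i}\ne a_{j}$, and $[\omega^{ij}]([\omega^{ij}]-1/2)=0$ whenever $a_{i}=a_{j}$. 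This already handles the mixed-pair claims and the $x(x-1/2)=0$ identities in parts (1) and (2).

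It remains to establish the sharper vanishing $[\omega^{kl}]=0$ on $W^{\delta'}(N-2s)$ for the ``both out'' pair $(k,l)$, where $0\le N-2s\le N$. For such small second parameters, the lowest weight vectors of $M^{\delta'}(N-2s)$ are obtained from the minimum-norm lattice vectors
\[\mu=\tfrac{1}{2}\sum_{r=1}^{N}\epsilon_{r}\alpha^{i_{r}},\qquad \epsilon_{r}\in\{\pm1\},\]
suitably symmetrized under the diagonal-$\wg$-action to give highest-weight vectors of $H$-eigenvalue $N-2s$. Such $\mu$ have zero projection onto $\alpha^{k}$ and $\alpha^{l}$, so $(\alpha^{k}-\alpha^{l},\mu)=0$: the free-boson part $\tfrac{1}{16}(\alpha^{k}-\alpha^{l})(-1)^{2}{\bf 1}$ annihilates $e^{\mu}$, and the shift modes $(e^{\pm(\alpha^{k}-\alpha^{l})})_{1}$ also vanish on $e^{\mu}$ since the required mode index forces $(\pm(\alpha^{k}-\alpha^{l}),\mu)\le-2$, contradicting $(\alpha^{k}-\alpha^{l},\mu)=0$. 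Therefore $\omega^{kl}_{1}e^{\mu}=0$ for every such $\mu$, and by symmetrization $[\omega^{kl}]=0$ on $W^{\delta'}(N-2s)$. The analogue $[\omega^{pq}]=0$ on $W^{\delta'}(N+2s)$ for the ``both in'' pair $(p,q)$ then follows from Lemma~\ref{lam-s2}: setting $\delta''=\delta'+1^{n+1}$ and $N''=n+1-N$, we have $W^{\delta'}(N+2s)\cong W^{\delta''}(N''-2s)$, and ``both in'' for $\delta'$ corresponds to ``both out'' for $\delta''$, so the preceding argument applied to $\delta''$ gives the claim.

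The main obstacle is the assertion that the lowest weight vectors of $M^{\delta'}(N-2s)$ are supported on the minimum-norm $e^{\mu}$'s alone, with no admixture from creation modes $\alpha^{p}(-m)$ for $m\ge 1$. This follows from a conformal-weight comparison: the minimum conformal weight of $M^{\delta'}(N-2s)$ equals $N/4-h^{(n+1)}_{N-2s}$, corresponding to a minimum total $L(0)$-weight $N/4$ in the ambient lattice VOA, which is achieved precisely by the $e^{\mu}$ with $\mu$ of minimum norm; any contribution involving $\alpha^{p}(-m)$ with $m\ge 1$ would require a lattice vector of norm squared $N/2-2m<N/2$, below the coset minimum. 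A multiplicity count using the $L_{\wg}(n+1,N-2s)$-isotypic decomposition then confirms that the $\wg$-highest-weight combinations of these minimum-norm vectors exhaust the lowest weight space of $M^{\delta'}(N-2s)$.
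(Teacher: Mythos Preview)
The paper states this lemma without proof, presenting it as the evident generalization of the $n=3$ case (Lemma~\ref{module-2}). Your argument supplies the missing details and is essentially correct.

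A few points worth tightening. First, the notation $h^{(n+1)}_{N-2s}$ collides with the paper's $h^{(m)}_{r,s}$ for minimal-model Virasoro weights; you mean the lowest $L(0)$-eigenvalue $\frac{(N-2s)(N-2s+2)}{4(n+3)}$ of $L_{\wg}(n+1,N-2s)$, and it is cleaner just to write that. Second, the step ``minimum conformal weight of $M^{\delta'}(N-2s)$ equals $N/4-\cdots$'' is really the conclusion, not the input: the clean argument is that the entire $L(0)_{\mathrm{total}}$-weight-$N/4$ subspace of $V_{\gamma_{\delta'}+A_{1}^{n+1}}$ is spanned by the minimum-norm $e^{\mu}$ (any $\alpha^{p}(-m)$ contribution forces a coset vector of norm below the minimum), this $2^{N}$-dimensional space carries the tensor-power $sl_{2}$-action via $E(0),F(0),H(0)$, and each $sl_{2}$-highest-weight vector of $H$-eigenvalue $l$ is automatically $\wg$-singular (positive modes lower the weight below $N/4$), hence lies in the bottom of $L_{\wg}(n+1,l)\otimes M^{\delta'}(l)$. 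This simultaneously identifies $W^{\delta'}(l)$ for all $0\le l\le N$, $l\equiv N\pmod 2$, with the multiplicity space, and your computation $\omega^{kl}_{1}e^{\mu}=0$ then applies directly. Third, for the duality step you invoke Lemma~\ref{lam-s2}, which is phrased for $\delta\in\Z_{2}^{n}$; the translation to $\delta'\in\Z_{2}^{n+1}$ via \eqref{de} gives exactly $M^{\delta'}(l)\cong M^{\delta'+1^{n+1}}(n+1-l)$, but you should say so explicitly. Finally, the sentence ``this parity determines the module type'' is correct and standard (projection of the coset onto $\C(\alpha^{i}-\alpha^{j})$ lands in $\frac{1}{4}\Z(\alpha^{i}-\alpha^{j})/\Z(\alpha^{i}-\alpha^{j})$, and the odd classes are precisely the $L(1/2,1/16)$ sector), and combining it with the spectrum constraint from Lemma~\ref{Vira-semi} is an efficient way to avoid a case-by-case computation of eigenvalues; this is a nice touch.
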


Let $I^{n}$ be the two-sided ideal of $A(M^{(n)})$  generated by $[\omega^{ij}]([\omega^{ij}]-\frac{1}{2})$, $0\leq i<j\leq n$. We have the following result.
\begin{lem}\label{sym3}
For $n+1=2M$ or $2M+1$, $A(M^{(n)})/I^{n}$ has at least $M+1$ inequivalent irreducible modules.
\end{lem}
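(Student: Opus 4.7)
The plan is to exhibit $M+1$ pairwise non-isomorphic irreducible $A(M^{(n)})/I^{n}$-modules drawn directly from the known list of $W^{\delta}(k)$. Specifically, I would take $\delta = 0 = (0,\dots,0)\in \Z_{2}^{n}$ (so that $\delta'=(0,\dots,0)\in\Z_{2}^{n+1}$) and let $k$ range over the even values $2l$ with $0\leq 2l\leq n+1$. Since $n+1\in\{2M,2M+1\}$, these even values are precisely $2l$ for $l=0,1,\dots,M$, yielding exactly $M+1$ candidate modules $W^{0}(2l)$. Each satisfies the parity constraint $|\delta'|-2l\in 2\Z$, so each is the top level of a genuine irreducible $M^{(n)}$-module $M^{0}(2l)$.

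The first step is to check that every $W^{0}(2l)$ descends to $A(M^{(n)})/I^{n}$. This is immediate from Lemma~\ref{module-1}(1): on $W^{0}(2l)$ we have $[\omega^{ij}]([\omega^{ij}]-\tfrac{1}{2})=0$ for all $0\leq i<j\leq n$. Since $I^{n}$ is by definition generated by these very elements, $I^{n}$ annihilates $W^{0}(2l)$, and $W^{0}(2l)$ is therefore an $A(M^{(n)})/I^{n}$-module. By the standard theory of Zhu's algebra, $W^{0}(2l)$, being the top level of the irreducible $M^{(n)}$-module $M^{0}(2l)$, is irreducible as an $A(M^{(n)})$-module, and hence also as an $A(M^{(n)})/I^{n}$-module.

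The second step is pairwise non-isomorphism, which I would extract from Lemma~\ref{lam-s2}. For $\delta=\sigma=0\in\Z_{2}^{n}$ and $k=2l_{1}$, $k'=2l_{2}$, the criterion for $M^{0}(2l_{1})\cong M^{0}(2l_{2})$ reduces to either (a) $2l_{1}=2l_{2}$, or (b) $2l_{2}=n+1-2l_{1}$ together with $\sigma=\delta+1^{n}$. Possibility (b) forces $0=1^{n}$ in $\Z_{2}^{n}$, which fails for every $n\geq 1$. Hence (a) must hold, so the $M+1$ modules $\{W^{0}(2l)\}_{l=0}^{M}$ are pairwise non-isomorphic, giving the required lower bound.

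I do not foresee any substantive obstacle: every ingredient is already in place in the preceding lemmas, and the proof is essentially a bookkeeping consequence of Lemmas~\ref{module-1}(1) and~\ref{lam-s2}. The only conceptual observation worth highlighting is that the combination ``$\delta=0$ and $k$ even'' is precisely the combination isolating modules with no $\tfrac{1}{16}$-eigenvalue for any $[\omega^{ij}]$, which is exactly what the quotient by $I^{n}$ demands.
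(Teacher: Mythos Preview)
Your proof is correct and follows essentially the same approach as the paper: both exhibit the modules $W^{\underline{0}}(2k)$, $0\leq 2k\leq n+1$, as $M+1$ inequivalent irreducible $A(M^{(n)})/I^{n}$-modules. The paper's proof simply asserts this is ``easy to check,'' while you have spelled out the details by appealing to Lemmas~\ref{module-1}(1) and~\ref{lam-s2}.
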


\pf \ We  consider the decomposition:
  $$V_{\Z\alpha^0}\otimes V_{\Z\alpha^1}\otimes\cdots\otimes V_{\Z\alpha^n}=\sum\limits_{0\leq 2k\leq n+1}L_{\widehat{\frak g}}(n+1,2k)\otimes M^{\underline{0}}(2k).$$
    Then  it is easy to check that $W^{\underline{0}}(2k)$, $0\leq 2k\leq n+1$,  $\underline{0}=(0,0,\cdots,0)\in\Z_{2}^{n}$ are inequivalent irreducible modules of $A(M^{(n)})/I^{n}$.
    \qed

We are now in a position to state the following lemma.
\begin{lem}\label{l4.3}
$A(M^{(n)})/I^{n}$ is isomorphic to  the  associative algebra $T^{n+1}$ and $W^{\underline{0}}(2k)$, $0\leq 2k\leq n+1$,  $\underline{0}=(0,0,\cdots,0)\in\Z_{2}^{n}$ are all the inequivalent irreducible modules of $A(M^{(n)})/I^{n}$.
\end{lem}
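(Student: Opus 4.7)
The plan is to construct an explicit surjective algebra homomorphism $\bar\phi:T^{n+1}\twoheadrightarrow A(M^{(n)})/I^{n}$ and then pin it down as an isomorphism by matching the counts of irreducible modules provided by Proposition~\ref{coro1} (upper bound for $T^{n+1}$) and Lemma~\ref{sym3} (lower bound for $A(M^{(n)})/I^{n}$). Concretely, I define $\phi:\C[S_{n+1}]\to A(M^{(n)})/I^{n}$ on Coxeter generators by
\[
\phi(s_{i})\;=\;1-4[\omega^{i-1,i}],\qquad 1\le i\le n,
\]
with the intent that each transposition $(r,s)\in S_{n+1}$ is sent to $1-4[\omega^{r-1,s-1}]$.

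Verification of the Coxeter relations: $\phi(s_{i})^{2}=1$ is immediate from $[\omega^{i-1,i}]([\omega^{i-1,i}]-\tfrac{1}{2})=0$ in the quotient; the commutation $\phi(s_{i})\phi(s_{j})=\phi(s_{j})\phi(s_{i})$ for $|i-j|\ge 2$ follows from (\ref{zhu-sym}). The braid relation $\phi(s_{i})\phi(s_{i+1})\phi(s_{i})=\phi(s_{i+1})\phi(s_{i})\phi(s_{i+1})$ is the main technical step. I intend to verify it inside the rational sub-vertex-operator-algebra $M^{(ijl)}$ (for $\{i,j,l\}=\{i-1,i,i+1\}$). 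Rationality of $M^{(ijl)}$ makes $A(M^{(ijl)})$ finite-dimensional and semi-simple, and the further quotient $A(M^{(ijl)})/I^{(ijl)}$ retains exactly those irreducibles of Lemma~\ref{l4.1} on which each $[\omega^{pq}]$ has eigenvalues in $\{0,\tfrac{1}{2}\}$, namely the trivial module $W^{2}$ and the two-dimensional $W^{1}$. On $W^{2}$ the relation is trivial; on $W^{1}$ a direct $2\times 2$ computation with the matrices of Lemma~\ref{l4.1} gives
\[
(1-4\omega^{ij})(1-4\omega^{jl})(1-4\omega^{ij})\;=\;1-4\omega^{il}\;=\;(1-4\omega^{jl})(1-4\omega^{ij})(1-4\omega^{jl}).
\]
By semi-simplicity the braid relation holds in $A(M^{(ijl)})/I^{(ijl)}$, and it transfers to $A(M^{(n)})/I^{n}$ via the inclusion-induced map (this map is well defined because $\omega_{(n)}$ and $\omega^{ijl}$ induce the same $L(0)$-grading on $M^{(ijl)}$, so $O(M^{(ijl)})$ lands inside $O(M^{(n)})$).

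Next I check that $\phi$ descends to $T^{n+1}$. Using $\omega^{ijl}=\tfrac{4}{5}(\omega^{ij}+\omega^{jl}+\omega^{il})$ one computes
\[
\phi(s_{i}+s_{i+1}+s_{i+1}s_{i}s_{i+1})\;=\;3-4([\omega^{i-1,i}]+[\omega^{i,i+1}]+[\omega^{i-1,i+1}])\;=\;3-5[\omega^{(i-1)i(i+1)}],
\]
so the defining generator of $J^{n+1}$ maps to $5[\omega^{ijl}](5[\omega^{ijl}]-3)$. The identity $[\omega^{ijl}](5[\omega^{ijl}]-3)=0$ modulo $I^{n}$ is again verified inside $A(M^{(ijl)})/I^{(ijl)}$: on $W^{1}$ one has $[\omega^{ijl}]=\tfrac{3}{5}$ and on $W^{2}$ one has $[\omega^{ijl}]=0$, so the polynomial vanishes on both irreducibles and hence in the semi-simple quotient. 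Therefore $\phi$ factors through $\bar\phi:T^{n+1}\to A(M^{(n)})/I^{n}$, and this map is surjective since $A(M^{(n)})/I^{n}$ is generated by the $[\omega^{ij}]$'s (Theorem~\ref{p4.2}) and each $[\omega^{ij}]=\tfrac{1}{4}(1-\bar\phi((i+1,j+1)))$.

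To finish, Proposition~\ref{coro1} bounds the irreducibles of $T^{n+1}$ from above by $M+1$, while Lemma~\ref{sym3} exhibits $M+1$ inequivalent irreducibles $W^{\underline{0}}(2k)$ of $A(M^{(n)})/I^{n}$. Surjectivity of $\bar\phi$ pulls these back to $M+1$ distinct irreducibles of $T^{n+1}$, so both sides have exactly $M+1$. Since $T^{n+1}$ is a quotient of the semi-simple algebra $\C[S_{n+1}]$, it is itself semi-simple — a product of $M+1$ matrix algebras — and any surjection from such an algebra onto a ring with the same number of simple quotients must be an isomorphism; hence no simple summand is killed and $\bar\phi$ is bijective. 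This gives $A(M^{(n)})/I^{n}\cong T^{n+1}$ and simultaneously identifies $\{W^{\underline{0}}(2k):0\le 2k\le n+1\}$ as the complete list of inequivalent irreducibles. The principal obstacle in executing this plan is the braid relation together with the mild but essential check that the natural map $A(M^{(ijl)})\to A(M^{(n)})$ is well defined; everything else is formal given the results already established in the excerpt.
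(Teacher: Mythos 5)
Your proposal is correct and follows essentially the same route as the paper: the paper also sends the transposition $(i+1,j+1)$ to $1-4[\omega^{ij}]$, checks the Coxeter and $J^{n+1}$-relations via Lemma~\ref{l4.1} and (\ref{zhu-sym}), uses Theorem~\ref{p4.2} for surjectivity, and concludes by the same counting argument combining Proposition~\ref{coro1} with Lemma~\ref{sym3}. Your verification of the braid and ideal relations inside the semisimple quotient of $A(M^{(ijl)})$ (together with the grading-compatibility check for the induced map on Zhu algebras) is just a more detailed execution of the paper's terse appeal to Lemma~\ref{l4.1}.
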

\pf \ For $[u]\in A(M^{(n)})$, denote its image in $A(M^{(n)})/I$ by $\overline{[u]}$.  Let
$$
\lambda^{ij}=1-4\omega^{ij},  \ \ 0\leq i<j\leq n.
$$
Then for $0\leq i\neq j\leq n$, by the fact that $\overline{[\omega^{ij}]}(\overline{[\omega^{ij}]}-\frac{1}{2})=0$,  we have
$$(\overline{[\lambda^{ij}]})^2=1.
$$
 Let $s^i=\overline{[\lambda^{i-1,i}]}$, $i=1,2,\cdots,n$.  By (\ref{zhu-sym}) and Lemma \ref{l4.1}, we have
  $$
  s^is^j=s^js^i, \ \ |i-j|\geq 2,
  $$
  $$
  s^is^{i+1}s^i=s^{i+1}s^is^{i+1}=\overline{[\lambda^{i-1,i+1}]}, \ \ i=1,2,\cdots,n-1.
  $$
  By Lemma \ref{l4.1}, it is easy to check that
  $$
 (s^{i}+s^{i+1}+s^{i+1}s^{i}s^{i+1})(s^{i}+s^{i+1}+s^{i+1}s^{i}s^{i+1}-3)=0, 1\leq i\leq n-1.
  $$
  Define $\psi: \C[S_{n+1}]\rightarrow A(M^{(n)})/I$ by
  $$
  s_{i+1,j+1} \mapsto \overline{[\lambda^{ij}]}. \
  $$
  In particular,
  $$
  s_i\mapsto s^{i}.
  $$
  Then $A(M^{(n)})/I^{n}$ is isomorphic to a quotient of $\C[S_{n+1}]/J^{n+1}$. The lemma follows from Proposition \ref{coro1} and Lemma \ref{sym3}.
  \qed

Let $N\in\Z$ such that $1\leq N\leq n$. For $0\leq i_{1}<i_{2}<\cdots <i_{N}\leq n$. Denote
$$\{i_{1},i_{2},\cdots,i_{N}\}^c=\{0,1,2,\cdots,n\}\setminus \{i_{1},i_{2},\cdots,i_{N}\}.$$
Let $I(i_{1},i_{2},\cdots,i_{N})$ be the two-sided ideal of $A(M^{(n)})$ generated by $[\omega^{ij}]-\frac{1}{16},$ $[\omega^{kl}]([\omega^{kl}]-\frac{1}{2})$, where $i\in\{i_{1},i_{2},\cdots,i_{N}\}$, $j\in \{i_{1},i_{2},\cdots,i_{N}\}^c$, $k<l$, and $k,l\in\{i_{1},\cdots,i_{N}\}$ or $k,l\in \{i_{1},\cdots,i_{N}\}^c$. Let $P(N)$ be the complex commutative polynomial algebra with variables $x^{ij}$, $i=1,2,\cdots, N; j=N+1,N+2,\cdots,n+1$ and $\bar{P}(N)$ the quotient algebra of $P(N)$ by the ideal generated by $x^{ij}-\frac{1}{16}$, where  $i=1,2,\cdots, N; j=N+1,N+2,\cdots,n+1$.
\begin{remark}
Obviously, we may assume that $1\leq N\leq \frac{n+1}{2}$.
\end{remark}
We have the following lemma.

\begin{lem}\label{l4.4}
The associative algebra $A(M^{(n)})/I(i_{1},i_{2},\cdots,i_{N})$ is isomorphic to a quotient of  the associative algebra $T^{N}\otimes T^{n+1-N}\otimes \bar{P}(N)$.
\end{lem}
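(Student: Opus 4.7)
The plan is to mimic the construction used in Lemma~\ref{l4.3} but now separately on each of the two ``blocks'' cut out by the prescribed index subset. Write $\bar A := A(M^{(n)})/I(i_1,\ldots,i_N)$ and let $\{j_1<\cdots<j_{n+1-N}\}=\{0,\ldots,n\}\setminus\{i_1,\ldots,i_N\}$. The defining relations of $I(i_1,\ldots,i_N)$ say that in $\bar A$ every ``cross'' class $\overline{[\omega^{ij}]}$ with $i\in\{i_1,\ldots,i_N\}$ and $j$ in the complement is the scalar $\frac{1}{16}$, while within either block one has $\overline{[\omega^{pq}]}(\overline{[\omega^{pq}]}-\tfrac12)=0$, equivalently $\overline{[\lambda^{pq}]}^{\,2}=1$ where $\lambda^{pq}:=1-4\omega^{pq}$.

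First I would produce three algebra maps into $\bar A$: (i) $\psi_1:\C[S_N]\to\bar A$, defined on Coxeter generators by $s_k\mapsto \overline{[\lambda^{i_ki_{k+1}}]}$; (ii) $\psi_2:\C[S_{n+1-N}]\to\bar A$, defined analogously on the complementary indices by $s_k\mapsto \overline{[\lambda^{j_kj_{k+1}}]}$; and (iii) $\psi_3:P(N)\to\bar A$, sending each variable $x^{pq}$ to $\overline{[\omega^{i_pj_q}]}=\frac{1}{16}$. That $\psi_1$ and $\psi_2$ factor through $T^N$ and $T^{n+1-N}$ respectively is exactly the three-index computation carried out in Lemma~\ref{l4.3}: inside each subalgebra $M^{(i_pi_qi_r)}$ with all three indices in the same block the relations $\overline{[\omega^{i_ui_v}]}(\overline{[\omega^{i_ui_v}]}-\tfrac12)=0$ force, via Lemma~\ref{l4.1}, both the braid relation and the cubic defining relation of $J^N$ (and similarly for the complementary block). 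For $\psi_3$ there is nothing to check since $\bar P(N)=\C$.

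Next, to assemble the three maps into a single homomorphism out of the tensor product, I would verify that the three images commute pairwise in $\bar A$. The image of $\psi_3$ is scalar, hence automatically central. The images of $\psi_1$ and $\psi_2$ commute because any $\overline{[\omega^{i_pi_q}]}$ and any $\overline{[\omega^{j_rj_s}]}$ involve four pairwise distinct indices, so \eqref{zhu-sym} (and the symmetry of the Zhu product for disjoint conformal vectors already exploited in Lemma~\ref{l4.3}) gives the required commutation. This yields a well-defined algebra homomorphism
$$\Psi:T^N\otimes T^{n+1-N}\otimes \bar P(N)\longrightarrow \bar A.$$
Surjectivity of $\Psi$ then follows immediately from Theorem~\ref{p4.2}: each generator $[\omega^{ij}]$ of $A(M^{(n)})$, when projected to $\bar A$, either becomes the scalar $\frac{1}{16}$ (in the cross case, hence in the image of $\psi_3$), or equals $\frac{1}{4}(1-\overline{[\lambda^{i_pi_q}]})$ or $\frac{1}{4}(1-\overline{[\lambda^{j_rj_s}]})$, lying in the image of $\psi_1$ or $\psi_2$.

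The main obstacle I expect is the verification that the cubic defining relation of $J^N$ holds inside $\bar A$ as an identity in the abstract algebra, not merely on each irreducible representation. This should be handled exactly as in Lemma~\ref{l4.3}, by invoking the complete description of the three-index subalgebra $M^{(i_pi_qi_r)}$ together with the matrix data of Lemma~\ref{l4.1}; the block relation $\overline{[\omega^{pq}]}(\overline{[\omega^{pq}]}-\tfrac12)=0$ that we have built into $I(i_1,\ldots,i_N)$ is precisely what is needed to exclude the $\frac{1}{16}$-type summands of $M^{(i_pi_qi_r)}$ and to recognize $\overline{[\lambda^{i_pi_q}]}, \overline{[\lambda^{i_qi_r}]}, \overline{[\lambda^{i_pi_r}]}$ as an $S_3$-triple inside $\bar A$.
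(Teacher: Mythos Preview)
Your approach is essentially the same as the paper's: construct a surjective algebra homomorphism from $\C[S_N]\otimes\C[S_{n+1-N}]\otimes P(N)$ onto $\bar A$ that kills $J^N$, $J^{n+1-N}$, and the ideal generated by $x^{ij}-\tfrac{1}{16}$, exactly by transporting the Lemma~\ref{l4.3} argument to each block and invoking \eqref{zhu-sym} for the cross-block commutation. The paper's proof is much terser (it writes the map directly on transpositions $s_{pq}$ and declares the verification ``easy to check''), whereas you spell out the use of $\lambda^{pq}=1-4\omega^{pq}$, the pairwise commutation of the three images, and surjectivity via Theorem~\ref{p4.2}; but the underlying idea is identical.
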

 \pf \ For $[u]\in A(M^{(n)})$, we denote its image in $A(M^{(n)})/I(i_{1},i_{2},\cdots,i_{N})$ by $\overline{[u]}$.  Define a linear map $\phi: \C[S_{N}]\otimes \C[S_{n+1-N}]\otimes {P}(N)\rightarrow A(M^{(n)})/I(i_{1},\cdots,I_{N})$ by
$$
s_{pq}\otimes 1\otimes 1 \mapsto \overline{[\omega^{i_{p}i_{q}}]}, \ \ 1\otimes s_{kl}\otimes 1\mapsto \overline{[\omega^{i_{N+k}i_{N+l}}]}, \ \ 1\otimes 1\otimes x^{rm}\mapsto \overline{[\omega^{i_{r}i_{m}}]},$$
for $p,q=1,2,\cdots,N, p<q$, $k,l=1,2,\cdots, n+1-N, k<l$, $r=1,2,\cdots,N, m=N+1, N+2,\cdots,n+1$.
It is easy to check that $\phi$ is an algebra homomorphism and
$$
\phi(J^{N}\otimes 1\otimes 1)=\phi(1\otimes J^{n+1-N}\otimes 1)=0, \ \phi(1\otimes 1\otimes  (x^{ij}-\frac{1}{16}))=0.
$$
The lemma follows. \qed

\begin{lem}\label{l4.2}
Let $W$ be an irreducible module of $A(M^{(n)})$. For fixed $0\leq i<j\leq n$, if there exists $u\in W$ such that $[\omega^{ij}]u=\frac{1}{16}u$, then $[\omega^{ij}]$ acts on $W$ as $\frac{1}{16}$.
\end{lem}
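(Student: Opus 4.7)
The plan is to show that the eigenspace
\[W_0=\{w\in W:[\omega^{ij}]w=\tfrac{1}{16}w\}\]
is an $A(M^{(n)})$-submodule of $W$; since $W_0\neq 0$ by hypothesis and $W$ is irreducible, this will force $W_0=W$. By Theorem~\ref{p4.2}, $A(M^{(n)})$ is generated by the classes $[\omega^{kl}]$, so I need to verify that each of them preserves $W_0$. The case $\{k,l\}\cap\{i,j\}=\emptyset$ is immediate from (\ref{zhu-sym}), and $\{k,l\}=\{i,j\}$ is tautological. The nontrivial cases are those in which $\{k,l\}$ shares exactly one index with $\{i,j\}$; by the symmetric role of $i$ and $j$ it suffices to handle $[\omega^{il}]$ for a fixed $l\notin\{i,j\}$.

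Let $B\subseteq A(M^{(n)})$ be the subalgebra generated by $[\omega^{ij}],[\omega^{il}],[\omega^{jl}]$. The central observation is that $B$ is a quotient of $A(M^{(ijl)})$. Indeed, by the standard coset construction the element $\omega-\omega^{(ijl)}$ lies in the commutant of $M^{(ijl)}$ in $M^{(n)}$, so $L(0)$ and $L^{(ijl)}(0)$ coincide on $M^{(ijl)}$; hence the weights used in the star product and in $O(M^{(ijl)})$ agree with those in $A(M^{(n)})$, and the inclusion $M^{(ijl)}\hookrightarrow M^{(n)}$ descends to a well-defined algebra surjection $A(M^{(ijl)})\twoheadrightarrow B$. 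Because $M^{(ijl)}$ is rational (see the discussion preceding Lemma~\ref{l4.1}), $A(M^{(ijl)})$ is finite-dimensional and semisimple, and therefore so is $B$.

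Consequently $W$, regarded as a $B$-module by restriction, decomposes as a direct sum $W=\bigoplus_\alpha V_\alpha$ with each $V_\alpha$ isomorphic to one of the eight irreducible $A(M^{(ijl)})$-modules $W^1,\ldots,W^8$ tabulated in Lemma~\ref{l4.1}. From that tabulation, the scalar $\tfrac{1}{16}$ appears as an eigenvalue of $[\omega^{ij}]$ only on the four one-dimensional modules $W^4,W^5,W^7,W^8$, and on each of those $[\omega^{ij}]$ acts as the scalar $\tfrac{1}{16}$ identically. Writing $u=\sum_\alpha u_\alpha$ with $u_\alpha\in V_\alpha$ and comparing components of $[\omega^{ij}]u=\tfrac{1}{16}u$ in the direct sum yields $[\omega^{ij}]u_\alpha=\tfrac{1}{16}u_\alpha$ for every $\alpha$, forcing $u_\alpha=0$ unless $V_\alpha$ is one of those four types. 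For each surviving $\alpha$ one has $[\omega^{il}]u_\alpha\in V_\alpha$ and $[\omega^{ij}]$ already acts as $\tfrac{1}{16}$ on $V_\alpha$, so $[\omega^{il}]u_\alpha\in W_0$; summing gives $[\omega^{il}]u\in W_0$, as needed.

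The main technical hurdle I anticipate is the careful justification of the surjection $A(M^{(ijl)})\twoheadrightarrow B$ via the agreement of conformal weights on $M^{(ijl)}$; once this is in place, the remainder is routine eigenvalue bookkeeping that uses nothing beyond the explicit $[\omega^{ij}]$-spectrum on the eight irreducibles in Lemma~\ref{l4.1}.
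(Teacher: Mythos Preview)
Your proof is correct and follows essentially the same approach as the paper: both show that the $\tfrac{1}{16}$-eigenspace of $[\omega^{ij}]$ is invariant under every generator $[\omega^{kl}]$ by restricting to the subalgebra $A^{(ijl)}$ generated by $[\omega^{ij}],[\omega^{il}],[\omega^{jl}]$ and reading off the $[\omega^{ij}]$-spectrum on its irreducibles from Lemma~\ref{l4.1}. Your version is more explicit in justifying the semisimplicity of this subalgebra via the surjection $A(M^{(ijl)})\twoheadrightarrow B$ (which the paper uses tacitly), but the argument is the same.
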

\pf \ By Lemma \ref{l4.1} we may assume that
$$
W=W^1\oplus W^2
$$
such that $[\omega^{ij}]u=\frac{1}{16}u$, for $u\in W^1$, and $([\omega^{ij}])([\omega^{ij}]-\frac{1}{2})=0$ on $W^2$. We will prove that $W^1$ is an $A(M^{(n)})$-submodule of $W$. By Theorem \ref{p4.2}, it suffices to prove that $W^1$ is invariant under actions of $[\omega^{ij}]$, for all $0\leq i<j\leq n$.  By (\ref{zhu-sym}), $W^1$ is invariant under the action of $[\omega^{kl}]$ for $0\leq k,l\leq n$ such that $k,l\neq i,j$. We now consider $\omega^{jl},\omega^{il}$, for $0\leq l\leq n$. Decompose $W$ into direct sum of irreducible
modules of the algebra $A^{(ijl)}=\C[\omega^{ij}]\oplus \C[\omega^{jl}]\oplus \C[\omega^{il}]$. By Lemma \ref{l4.1}, $W^1$ is a direct sum of irreducible one-dimensional $A^{(ijl)}$-modules such that $[\omega^{ij}]$ acts as $\frac{1}{16}$. So $W^1$ is invariant under $A^{(ijl)}$. This deduces that $W^1$ is a module of $A(M^{(n)})$. Since $W$ is irreducible, it follows that $W^1=W$. We complete the proof. \qed

\begin{lem}\label{irr}
Let $W$ be an irreducible module of $A(M^{(n)})$. If there exists  $\omega^{pq}$ such that $[\omega^{pq}]([\omega^{pq}]-\frac{1}{2})W\neq 0$. Then there exist $1\leq N\leq \frac{n+1}{2}$ and  $0\leq i_{1}<i_{2}<\cdots<i_{N}\leq n$ such that $W$ is an irreducible module of $A(M^{(n)})/I(i_{1},\cdots,i_{N})$.
\end{lem}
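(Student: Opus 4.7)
The plan is to introduce a binary relation on $\{0,1,\ldots,n\}$ whose two equivalence classes will furnish the partition witnessing that $W$ descends to an irreducible module over $A(M^{(n)})/I(i_1,\ldots,i_N)$.

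First, Lemma~\ref{Vira-semi} together with Lemma~\ref{l4.2} yields the following dichotomy on $W$: for each pair $i\neq j$, either $[\omega^{ij}]=\frac{1}{16}$ on $W$, or else $[\omega^{ij}]([\omega^{ij}]-\frac{1}{2})=0$ on $W$. Indeed, the cubic polynomial $x(x-\frac{1}{2})(x-\frac{1}{16})$ annihilates $[\omega^{ij}]$, and the existence of any $\frac{1}{16}$-eigenvector forces $[\omega^{ij}]=\frac{1}{16}$ on all of $W$ by Lemma~\ref{l4.2}. Accordingly, define $i\sim j$ to mean $i=j$ or $[\omega^{ij}]([\omega^{ij}]-\frac{1}{2})=0$ on $W$, so that $i\not\sim j$ is equivalent to $[\omega^{ij}]=\frac{1}{16}$ on $W$.

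The main step is transitivity. For distinct $i,j,k$, restrict $W$ to the subalgebra $A(M^{(ijk)})$ and decompose it into the eight irreducibles enumerated in Lemma~\ref{l4.1}. If $i\sim j$, $j\sim k$, but $i\not\sim k$, then $[\omega^{ik}]=\frac{1}{16}$ on $W$, which rules out the 2-dimensional summand $W^1$ (on which $\omega^{ik}$ has eigenvalues in $\{0,\frac{1}{2}\}$). Among the 1-dimensional summands with $\omega^{ik}=\frac{1}{16}$, every one has exactly one of $\omega^{ij},\omega^{jk}$ equal to $\frac{1}{16}$. At least one such summand appears, so $[\omega^{ij}]$ or $[\omega^{jk}]$ admits a $\frac{1}{16}$-eigenvector in $W$; Lemma~\ref{l4.2} then upgrades that operator to $\frac{1}{16}$ on all of $W$, contradicting $i\sim j$ or $j\sim k$. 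This case analysis is the crux; reflexivity and symmetry are immediate. The same table rules out three distinct classes: representatives $i,j,k$ would force $[\omega^{ij}]=[\omega^{ik}]=[\omega^{jk}]=\frac{1}{16}$ on $W$, but no irreducible summand of $A(M^{(ijk)})$ admits such a triple, so $W$ would vanish.

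Finally, the hypothesis together with the dichotomy yields some $(p,q)$ with $[\omega^{pq}]=\frac{1}{16}$ on $W$, giving at least two equivalence classes, hence exactly two. Taking $\{i_1<\cdots<i_N\}$ to be the smaller class, one has $1\leq N\leq (n+1)/2$, and the defining generators of $I(i_1,\ldots,i_N)$ annihilate $W$: the generator $[\omega^{ij}]-\frac{1}{16}$ vanishes whenever $i,j$ lie in different classes ($i\not\sim j$), and $[\omega^{kl}]([\omega^{kl}]-\frac{1}{2})$ vanishes whenever $k,l$ lie in the same class ($k\sim l$). Therefore $W$ is naturally an irreducible module over $A(M^{(n)})/I(i_1,\ldots,i_N)$, as required. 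The principal obstacle is the transitivity step, which hinges on the tight list of allowed eigenvalue triples in Lemma~\ref{l4.1}; everything else reduces to the dichotomy and bookkeeping.
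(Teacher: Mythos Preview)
Your proof is correct and uses the same key ingredients as the paper---Lemma~\ref{Vira-semi} for the cubic, Lemma~\ref{l4.2} for the propagation of the $\frac{1}{16}$-eigenvalue, and the explicit list of eight $A(M^{(ijk)})$-irreducibles from Lemma~\ref{l4.1}---but organizes them differently. The paper proceeds constructively: starting from the given pair $(p,q)$ with $[\omega^{pq}]=\frac{1}{16}$, it sorts every other index $r$ into one of two bins according to which of $[\omega^{pr}],[\omega^{qr}]$ equals $\frac{1}{16}$ (via Lemma~\ref{l4.1} applied to the triple $\{p,q,r\}$), and then verifies directly that indices in the same bin satisfy $[\omega^{kl}]([\omega^{kl}]-\frac{1}{2})=0$ and indices in opposite bins satisfy $[\omega^{kl}]=\frac{1}{16}$. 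Your equivalence-relation formulation abstracts this: transitivity encodes the ``same-bin'' check, and the exclusion of three pairwise non-equivalent indices encodes the fact that only two bins arise. The student's packaging is arguably cleaner and makes the symmetry of the situation manifest, while the paper's version is more hands-on and perhaps easier to verify line by line; substantively they are the same argument.
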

{\bf Proof} \ \ By the assumption and Lemma \ref{Vira-semi},   $[\omega^{pq}]$ has an eigenvector with eigenvalue $\frac{1}{16}$ in $W$. By Lemma \ref{l4.2}, $[\omega^{pq}]=\frac{1}{16}$ on $W$. For $0\leq r\leq n$ different from $p$ and $q$, by Lemma \ref{l4.1}, Lemma \ref{Vira-semi} and Lemma \ref{l4.2}, $[\omega^{pr}]=\frac{1}{16}$ and $[\omega^{qr}]([\omega^{qr}]-\frac{1}{2})=0$ or $[\omega^{qr}]=\frac{1}{16}$ and $[\omega^{pr}]([\omega^{pr}]-\frac{1}{2})=0$.
 Then there exist $N\geq 1$ and  distinct $0\leq i_{1},i_{2},\cdots,i_{N}, j_{1},j_{2},\cdots,j_{n+1-N}\leq n$ such that
$$
[\omega^{pi_{k}}]=\frac{1}{16}, \ [\omega^{qj_{s}}]=\frac{1}{16}, \ k=1,2,\cdots,N, s=1,2,\cdots,n+1-N.
$$
Obviously, $p\in\{j_{1},\cdots,j_{n+1-N}\}$, $ q\in\{i_{1},i_{2},\cdots,i_{N}\}$ and $\{j_{1},\cdots,j_{n+1-N}\}=\{i_{1},\cdots,i_{N}\}^c$. So we have
$$
[\omega^{qi_{k}}]([\omega^{qi_{k}}]-\frac{1}{2})=0, \ [\omega^{pj_{s}}]([\omega^{pj_{s}}]-\frac{1}{2})=0, \ k=1,2,\cdots,N, s=1,2,\cdots,n+1-N,
$$
and therefore
$$
[\omega^{i_{r}j_{s}}]=\frac{1}{16}, \ r=1,2,\cdots,N, s=1,2,\cdots,n+1-N.
$$
Note that for $1\leq k\neq l\leq N$,
$$
[\omega^{qi_{k}}]([\omega^{qi_{k}}]-\frac{1}{2})=[\omega^{qi_{l}}]([\omega^{qi_{l}}]-\frac{1}{2})=0,
$$
so for $1\leq k\neq l\leq N$, we have
$$
[\omega^{i_{k}i_{l}}]([\omega^{i_{k}i_{l}}]-\frac{1}{2})=0.
$$
Similarly, for $1\leq k\neq l\leq n+1-N$, we have
$$
[\omega^{j_{k}j_{l}}]([\omega^{j_{k}j_{l}}]-\frac{1}{2})=0.
$$
We prove that $W$ is an $A(M^{(n)})/I(i_{1},\cdots,i_{N})$-module. \qed

\begin{defn} Let $W$ be an $A(M^{(n)})$-module. If for any $0\leq i\neq j\leq n$, $[\omega^{ij}]([\omega^{ij}]-\frac{1}{2})=0$ on $W$, then $W$ is called a type-(I) module. If there exist $1\leq N\leq n$ and  $0\leq i_{1}<i_{2}<\cdots<i_{N}\leq n$ such that $W$ is an $A(M^{(n)})/I(i_{1},\cdots,i_{N})$-module, then we call $W$ a type-(II) module.
\end{defn}

\vskip 0.3cm
  For $1\leq M\leq n$ and $0\leq j_{1}<\cdots <j_{M}\leq n$,
   let  $I^{\{j_{1},\cdots,j_{M}\}}$ be the two-sided ideal of $A(M^{(n)})$ generated by $[\omega^{kl}]$, $[\omega^{pq}]-\frac{1}{16}$ and $[\omega^{ij}]([\omega^{ij}]-\frac{1}{2})$,  $p,i,j\in\{j_{1},\cdots,j_{M}\}, i\neq j$, $q,k,l\in\{j_{1},\cdots,j_{M}\}^{c}=\{0,1,\cdots,n\}\setminus \{j_{1},\cdots,j_{M}\}, k\neq l$. Then we have
    \begin{lem}\label{module-4}
    Let $1\leq M\leq n$ and $0\leq j_{1}<\cdots <j_{M}\leq n$. Then there are exactly $l+1$ irreducible $A(M^{(n)})/I^{\{j_{1},\cdots,i_{M}\}}$-modules  which are the lowest weight spaces of $M^{\delta'}(M-2k)$, $0\leq 2k\leq M$, where $\delta'=(a_{0},\cdots,a_{n})$ such that $a_{k}=1$ for  $k\in\{j_{1},\cdots,j_{M}\}$ and $a_{k}=0$ for $k\in\{j_{1},\cdots,j_{M}\}^c$, where $M=2l$ or $M=2l+1$.
    \end{lem}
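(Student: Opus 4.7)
The plan is to combine an upper bound of $l+1$ on the number of simples of $A(M^{(n)})/I^{\{j_{1},\dots,j_{M}\}}$ with a matching lower bound coming from the $l+1$ modules $W^{\delta'}(M-2k)$ listed in the statement.

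For the upper bound, first observe that modulo $I^{\{j_{1},\dots,j_{M}\}}$ every $[\omega^{kl}]$ with both indices outside $\{j_{1},\dots,j_{M}\}$ becomes $0$, and every $[\omega^{pq}]$ with one index inside and one outside becomes the scalar $\tfrac{1}{16}$. By Theorem \ref{p4.2} the Zhu algebra $A(M^{(n)})$ is generated by the $[\omega^{ij}]$, so $A(M^{(n)})/I^{\{j_{1},\dots,j_{M}\}}$ is actually generated by the images of those $[\omega^{ij}]$ with $i,j\in\{j_{1},\dots,j_{M}\}$. Arguing exactly as in Lemma \ref{l4.3}, set $\lambda^{j_{r-1}j_{r}}=1-4\omega^{j_{r-1}j_{r}}$ and $s^{r}=\overline{[\lambda^{j_{r-1}j_{r}}]}$ for $r=1,\dots,M-1$. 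The defining relation $[\omega^{ij}]([\omega^{ij}]-\tfrac{1}{2})=0$ inside $I^{\{j_{1},\dots,j_{M}\}}$ gives $(s^{r})^{2}=1$; the commutativity identity (\ref{zhu-sym}) together with Lemma \ref{l4.1} applied inside the triple $\{j_{r-1},j_{r},j_{r+1}\}$ gives the Coxeter commutation, the braid relation $s^{r}s^{r+1}s^{r}=s^{r+1}s^{r}s^{r+1}$, and the cubic identity $(s^{r}+s^{r+1}+s^{r+1}s^{r}s^{r+1})(s^{r}+s^{r+1}+s^{r+1}s^{r}s^{r+1}-3)=0$. These are precisely the relations defining $T^{M}=\C[S_{M}]/J^{M}$, so we obtain an algebra surjection $T^{M}\twoheadrightarrow A(M^{(n)})/I^{\{j_{1},\dots,j_{M}\}}$. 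By Proposition \ref{coro1}, $T^{M}$ has at most $l+1$ inequivalent simple modules, and hence so does its quotient.

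For the lower bound, fix $\delta'\in\Z_{2}^{n+1}$ with $a_{k}=1$ iff $k\in\{j_{1},\dots,j_{M}\}$, so $|\delta'|=M$. For each integer $k$ with $0\leq 2k\leq M$, Lemma \ref{module-1}(2) (the $W^{\delta'}(N-2k)$ case with $N=M$) asserts exactly the relations $[\omega^{ij}]=\tfrac{1}{16}$ on mixed pairs, $[\omega^{pq}]([\omega^{pq}]-\tfrac{1}{2})=0$ on inside pairs, and $[\omega^{kl}]=0$ on outside pairs; these are the defining relations of $I^{\{j_{1},\dots,j_{M}\}}$, so each $W^{\delta'}(M-2k)$ descends to a simple module of the quotient. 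Lemma \ref{lam-s2} ensures that these $l+1$ modules are pairwise non-isomorphic: two of them share the same $\delta'$, so the second clause of that lemma would force $1^{n}=0$, which fails for $n\geq 1$, and the first clause forces $k_{1}=k_{2}$. Combining the two bounds yields exactly $l+1$ simple modules, as claimed.

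The main potential obstacle is verifying that the Coxeter, braid, and cubic relations used to define the surjection $T^{M}\twoheadrightarrow A(M^{(n)})/I^{\{j_{1},\dots,j_{M}\}}$ really hold inside the full Zhu algebra and not merely inside the three-index fragment $M^{(ijl)}$. However these verifications are identical to those already carried out in Lemma \ref{l4.3}: the required identities involve at most three indices from $\{j_{1},\dots,j_{M}\}$ at a time and reduce to Lemma \ref{l4.1} together with (\ref{zhu-sym}), so no new computation is needed.
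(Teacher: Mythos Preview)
Your proof is correct and follows essentially the same strategy as the paper: an upper bound of $l+1$ coming from a surjection $T^{M}\twoheadrightarrow A(M^{(n)})/I^{\{j_{1},\dots,j_{M}\}}$ (the paper phrases this as ``an irreducible module of the quotient is irreducible over the subalgebra $A^{(j_{1},\dots,j_{M})}/I'^{\{j_{1},\dots,j_{M}\}}$'' and then invokes Lemma~\ref{l4.3}), matched against the $l+1$ explicit modules $W^{\delta'}(M-2k)$ obtained from the coset decomposition. One small indexing slip: since your index set is $\{j_{1},\dots,j_{M}\}$, you should set $s^{r}=\overline{[\lambda^{j_{r}j_{r+1}}]}$ for $r=1,\dots,M-1$ rather than $\overline{[\lambda^{j_{r-1}j_{r}}]}$, as $j_{0}$ is undefined.
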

  \pf   Let $A^{(j_{1},\cdots,j_{M})}$ be the subalgebra of $A(M^{(n)})$ generated by $[\omega^{ij}], i,j\in\{j_{1},\cdots,j_{M}\}$ and $I'^{\{j_{1},\cdots,j_{M}\}}$ the two-sided ideal of $A^{(j_{1},\cdots,j_{M})}$ generated by $[\omega^{ij}]([\omega^{ij}]-\frac{1}{2})$, $i,j\in\{j_{1},\cdots,j_{M}\}$.

   Let $W$ be an irreducible $A(M^{(n)})/I^{\{j_{1},\cdots,i_{M}\}}$-module,
  then  $W$ is also  an irreducible $A^{(j_{1},\cdots, j_{M})}/I'^{\{j_{1},\cdots,j_{M}\}}$-module. So by Lemma \ref{l4.3}, there are at most $l+1$ inequivalent irreducible $A(M^{(n)})/I^{\{j_{1},\cdots,i_{M}\}}$-modules. Consider the decomposition of
  $$
  V_{\Z\al^0+\frac{a_0}{2}\al^0}\otimes  V_{\Z\al^1+\frac{a_1}{2}\al^1}\otimes\cdots  V_{\Z\al^n+\frac{a_n}{2}\al^n}
  $$
  such that $a_{k}=1$ for $k\in\{j_{1},\cdots,j_{M}\}$ and $a_{k}=0$ for $k\in\{j_{1},\cdots,j_{M}\}^c$. Then there are exactly $l+1$ inequivalent irreducible $A(M^{(n)})/I^{\{j_{1},\cdots,i_{M}\}}$-modules which are the lowest weight spaces of $M^{\delta'}(M-2k)$, $0\leq 2k\leq M$. The lemma follows. \qed

We have the following lemma.
\begin{lem}\label{module-3}
If $W$ is an irreducible type-(II) module of $A(M^{(n)})$. Then there exist $1\leq N\leq \frac{n+1}{2}$ and $0\leq i_{1}<\cdots <i_{N}\leq n$ such that $W$ is an irreducible $A(M^{(n)})/I^{\{i_1,\cdots,i_N\}}$-module or $W$ is an irreducible $A(M^{(n)})/I^{\{i_1,\cdots,i_N\}^c}$-module.
\end{lem}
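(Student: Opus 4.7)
My plan is to prove Lemma \ref{module-3} by combining the tensor product structure of the Zhu algebra quotient from Lemma \ref{l4.4} with a local analysis on four-index subalgebras using the $n=3$ classification in Lemma \ref{module-2}. Given an irreducible type-(II) module $W$, the hypothesis supplies a subset $J_1=\{a_1,\ldots,a_{N'}\}\subset\{0,\ldots,n\}$ with $1\le N'\le n$ through which $W$ factors as an $A(M^{(n)})/I(a_1,\ldots,a_{N'})$-module. Let $J_2=\{0,\ldots,n\}\setminus J_1$; if $|J_1|>(n+1)/2$ I swap the labels and call the smaller side $\{i_1,\ldots,i_N\}$, so $N\le (n+1)/2$ holds. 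The goal then reduces to showing that either $[\omega^{pq}]=0$ on $W$ for all $p,q\in J_1$, or $[\omega^{kl}]=0$ on $W$ for all $k,l\in J_2$; this is precisely the condition that $W$ be annihilated by $I^{\{i_1,\ldots,i_N\}^c}$ or by $I^{\{i_1,\ldots,i_N\}}$ respectively, and the case $N=1$ is automatic since the pair relations inside $J_1$ are vacuous.

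The heart of the argument is the following local orthogonality claim: for any choice of $p,q\in J_1$ and $k,l\in J_2$ with all four indices distinct, the operators $[\omega^{pq}]$ and $[\omega^{kl}]$ satisfy $[\omega^{pq}]\cdot[\omega^{kl}]=0$ on $W$. To prove this I will exploit the subalgebra of $M^{(n)}$ generated by $\{\omega^{ab}:a,b\in\{p,q,k,l\}\}$; by Lemma \ref{l3.1}(2) applied to four indices, this subalgebra is a homomorphic image of $M^{(3)}$, and hence its image in $A(M^{(n)})$ is a quotient of the semi-simple algebra $A(M^{(3)})$, in particular semi-simple. Therefore $W$ decomposes as a direct sum of irreducibles for this subalgebra, and each such irreducible is one of the $A(M^{(3)})$-irreducibles classified in Lemma \ref{module-2}. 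The type-(II) relations force $[\omega^{pk}]=[\omega^{pl}]=[\omega^{qk}]=[\omega^{ql}]=\tfrac{1}{16}$ on every summand, so by inspecting the classification only the three modules $W^{\delta'}(0)$, $W^{\delta'}(2)$, $W^{\delta'}(4)$ of Lemma \ref{module-2}(3) with $a_p=a_q=1$ can occur. On these, the pair $(\omega^{pq},\omega^{kl})$ takes the values $(\tfrac{1}{2},0)$, $(0,0)$, and $(0,\tfrac{1}{2})$ respectively, so $[\omega^{pq}]\cdot[\omega^{kl}]$ vanishes on every summand, hence on all of $W$.

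To finish, I invoke Lemma \ref{l4.4}: since $\bar{P}(N')\cong\C$ (all $x^{ij}$ are fixed to $\tfrac{1}{16}$), the irreducible module $W$ over $A(M^{(n)})/I(a_1,\ldots,a_{N'})$ is of the form $W\cong S^\lambda\otimes S^\mu$ with $\lambda\vdash N'$ and $\mu\vdash n+1-N'$, both partitions having at most two parts. Assume for contradiction that neither $\lambda=(N')$ nor $\mu=(n+1-N')$ is the trivial partition. In a non-trivial irreducible of $S_{N'}$ every transposition $s_{pq}$ has $-1$ as an eigenvalue: transpositions are conjugate, so if some $s_{pq}$ lacked $-1$ then every Coxeter generator $s_i$ would act as the identity, forcing triviality. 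Via the identification $s_{pq}\leftrightarrow 1-4[\omega^{i_pi_q}]$ from Lemma \ref{l4.3}, this yields a $\tfrac{1}{2}$-eigenvector $v\in S^\lambda$ for every $[\omega^{pq}]$ with $p,q\in J_1$, and analogously a $\tfrac{1}{2}$-eigenvector $w\in S^\mu$ of $[\omega^{kl}]$ for every $k,l\in J_2$. But then $v\otimes w$ satisfies $[\omega^{pq}]\cdot[\omega^{kl}](v\otimes w)=\tfrac{1}{4}(v\otimes w)\ne 0$, contradicting the orthogonality claim. Hence at least one of $\lambda,\mu$ is trivial, which annihilates all of the corresponding $[\omega^{ab}]$'s and places $W$ in the required quotient.

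The main technical obstacle is confirming that the restriction to the four-index subalgebra genuinely yields a semi-simple action whose irreducible summands are controlled by Lemma \ref{module-2}; once the four-index subVOA of $M^{(n)}$ is identified as a homomorphic image of $M^{(3)}$ (which follows because it is built by the same coset construction on four indices and generated by the corresponding $\omega^{ab}$'s), its image in $A(M^{(n)})$ is a quotient of the finite-dimensional semi-simple algebra $A(M^{(3)})$, and the subsequent tensor-factor Specht module analysis becomes routine.
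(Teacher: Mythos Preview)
Your argument is correct, and it shares the decisive local ingredient with the paper's proof: restricting to the subalgebra $A^{(pqkl)}$ generated by $[\omega^{ab}]$ with $a,b\in\{p,q,k,l\}$, using that this is a quotient of the semisimple algebra $A(M^{(3)})$, and reading off from Lemma~\ref{module-2}(3) that the pair $([\omega^{pq}],[\omega^{kl}])$ can only take the values $(\tfrac12,0),(0,0),(0,\tfrac12)$ on each irreducible summand. Where you diverge is in how this local input is globalised. The paper fixes a single pair $i,j\in J_1$ with $[\omega^{ij}]\neq 0$, shows via the $M^{(3)}$-analysis that $[\omega^{kl}]$ vanishes on the $\tfrac12$-eigenspace $U$ of $[\omega^{ij}]$ for every $k,l\in J_2$, and then propagates: since the $[\omega^{rs}]$ with $r,s\in J_1$ commute with the $[\omega^{kl}]$ with $k,l\in J_2$, the $A^{(J_1)}$-submodule $W(U)$ generated by $U$ is still killed by every such $[\omega^{kl}]$; irreducibility then forces $W(U)=W$. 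You instead package the same local fact as the identity $[\omega^{pq}]\cdot[\omega^{kl}]=0$ on $W$ for all such pairs and then appeal to Lemma~\ref{l4.4} together with the Specht-module observation that any nontrivial $S_m$-irreducible gives every transposition a $-1$-eigenvector. Your route brings in a bit more representation-theoretic machinery but yields a clean contradiction; the paper's route is more elementary, needing only commutativity and irreducibility to finish. Both are valid and essentially equivalent in depth.
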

\pf By the assumption, there exist $1\leq N\leq \frac{n+1}{2}$ and $0\leq i_{1}<\cdots <i_{N}\leq n$ such that
$$[\omega^{pq}]=\frac{1}{16}, \ [\omega^{kl}]([\omega^{kl}]-\frac{1}{2})=0, $$
for $p\in\{i_{1},i_{2},\cdots,i_{N}\}$, $q\in \{i_{1},i_{2},\cdots,i_{N}\}^c$,  and $k,l\in\{i_{1},\cdots,i_{N}\}$, or $k,l\in \{i_{1},\cdots,i_{N}\}^c$ with $k\neq l$.

 Obviously, if $N=1$, then $W$ is an irreducible $A(M^{(n)})/I^{\{i_{1}\}^c}$-module.

  We now assume that $2\leq N\leq \frac{n+1}{2}$. If for any $i,j\in\{i_{1},\cdots,i_{N}\}$ with  $i\neq j$, $[\omega^{ij}]=0$ on $W$, then $W$ is an irreducible module of $A(M^{(n)})/I^{\{i_{1},\cdots,i_{N}\}^c}$. If there exist $i,j\in\{i_{1},\cdots,i_{N}\}$ with $i\neq j$ such that $[\omega^{ij}]\neq 0$ on $W$. Note that on $W$ $[\omega^{ij}]([\omega^{ij}]-\frac{1}{2})=0$, then
  $$
  U=\{u\in W|[\omega^{ij}]u=\frac{1}{2}u\}\neq 0.
  $$
 For any $k,l\in\{i_{1},\cdots,i_{N}\}^c$ with $k\neq l$, since $[\omega^{kl}][\omega^{ij}]=[\omega^{ij}][\omega^{kl}]$, it follows that $U$ is invariant under the action of $[\omega^{kl}]$. Let $A^{(ijkl)}$ be the subalgebra of $A(M^{(n)})$ generated by all $[\omega^{pq}]$, $p,q\in\{i,j,k,l\}$ with $p\neq q$. Note that on $W$, $[\omega^{pq}]=\frac{1}{16}$, for $p=i,j, q=k,l$.
 Then $U$ is an $A^{(ijkl)}$-module. Since $A^{(ijkl)}$ is the Zhu algebra of $M^{(3)}$, it follows that $A^{(ijkl)}$ is semisimple and $U$ is a direct sum of irreducible $A^{(ijkl)}$-modules. Then by (3) of Lemma \ref{module-2}, we have $[\omega^{kl}]=0$ on $U$. We prove that for any $k,l\in\{i_{1},\cdots,i_{N}\}^c$ with $k\neq l$, $$[\omega^{kl}]|_{U}=0.$$
  Let $A^{(i_{1},i_{2},\cdots,i_{N})}$ be the subalgebra of $A(M^{(n)})$ generated by $[\omega^{rs}]$, $r,s\in\{i_{1},\cdots,i_{N}\}$ with $r\neq s$.
 Let $W(U)\subseteq W$ be the $A^{(i_{1},\cdots,i_{N})}$-submodule of $W$ generated by $U$. Since for any $r,s\in\{i_{1},\cdots,i_{N}\}$, $k,l\in\{i_{1},\cdots,i_{N}\}^c$, $[\omega^{rs}][\omega^{kl}]=[\omega^{kl}][\omega^{rs}]$, it follows that for any  $k,l\in\{i_{1},\cdots,i_{N}\}^c$,
 $$
 [\omega^{kl}]|_{W(U)}=0, $$
 proving that $W(U)$ is an $A(M^{(n)})$-module. Since $W$ is irreducible, we have $W=W(U)$. We show that $W$ is an irreducible $A(M^{(n)})/I^{(i_{1},\cdots,i_{N})}$-module.
\qed

We are now in a position to state the main result of this section.
\begin{theorem}\label{th4.1}
$M^{(n)}$ has $2^{n-1}(n+2)$ inequivalent irreducible modules which are given by
\begin{equation}\label{module1}
M^{\delta}(2k), \ \delta\in\Z^{n}_{2}, \ 0\leq 2k\leq n+1,
\end{equation}
if $n$ is even and
\begin{equation}\label{module2}
M^{\delta}(k), \ 0\leq k\leq n+1, \ \delta\in\Z^{n}_{2}, \ {\rm with} \ |\delta|\equiv k (mod 2),
\end{equation}
if $n$ is odd.
\end{theorem}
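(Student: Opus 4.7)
The plan is to invoke Zhu's correspondence and classify the irreducible $A(M^{(n)})$-modules. By Lemma~\ref{Vira-semi} each $[\omega^{ij}]$ acts on an irreducible $A(M^{(n)})$-module $W$ with spectrum in $\{0,\tfrac{1}{2},\tfrac{1}{16}\}$, and Lemma~\ref{irr} splits $W$ into two disjoint possibilities: $W$ is type-(I) (every $[\omega^{ij}]$ annihilated by $x(x-\tfrac{1}{2})$) or type-(II) (some $[\omega^{ij}]$ acts as $\tfrac{1}{16}$). I will exhaust each type and then count classes.

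For type-(I), Lemma~\ref{l4.3} presents $A(M^{(n)})/I^{n}$ as a quotient of $T^{n+1}$, so Proposition~\ref{coro1} yields an upper bound of $M+1$ irreducibles, where $n+1\in\{2M,2M+1\}$. Lemma~\ref{sym3} already exhibits $M+1$ distinct ones, the $W^{\underline{0}}(2k)$ with $0\leq 2k\leq n+1$, which therefore exhaust type-(I). For type-(II), Lemma~\ref{module-3} places $W$ in $A(M^{(n)})/I^{S}$ (or $A(M^{(n)})/I^{S^{c}}$) for some nonempty proper subset $S\subseteq\{0,1,\ldots,n\}$, and Lemma~\ref{module-4} realises the irreducibles of $A(M^{(n)})/I^{S}$ as the top pieces of the modules $M^{\delta'}(|S|-2k)$ with $\delta'\in\Z_{2}^{n+1}$ the indicator vector of $S$ and $0\leq 2k\leq |S|$. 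Consequently every irreducible $M^{(n)}$-module is of the form $M^{\delta'}(k)$ for some $\delta'\in\Z_{2}^{n+1}$ with $|\delta'|\equiv k \pmod 2$.

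The remaining step is counting. The number of admissible pairs $(\delta',k)\in\Z_{2}^{n+1}\times\{0,1,\ldots,n+1\}$ with $|\delta'|\equiv k\pmod 2$ is $(n+2)\cdot 2^{n}$. Lemma~\ref{lam-s2} provides the involution $(\delta',k)\leftrightarrow(\delta'+1^{n+1},n+1-k)$, which is fixed-point-free because $1^{n+1}\neq 0$ in $\Z_{2}^{n+1}$, so there are exactly $2^{n-1}(n+2)$ isomorphism classes. Choosing canonical representatives then recovers the two forms in the theorem: for even $n$ one keeps $k=2k'$ even, giving $M^{\delta}(2k')$ with arbitrary $\delta\in\Z_{2}^{n}$ and $0\leq 2k'\leq n+1$; for odd $n$ one keeps $\delta'_{n}=0$, which forces the parity constraint $|\delta|\equiv k\pmod 2$ with $\delta\in\Z_{2}^{n}$ and $0\leq k\leq n+1$. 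The main obstacle is the bookkeeping that matches the subset-$S$ parametrisation of type-(II) modules with the $(\delta',k)$ parametrisation and aligns the $S\leftrightarrow S^{c}$ identification coming from Lemma~\ref{module-3} with the $(\delta',k)\leftrightarrow(\delta'+1^{n+1},n+1-k)$ identification of Lemma~\ref{lam-s2}; once this dictionary is in place the final tally $2^{n-1}(n+2)$ follows immediately.
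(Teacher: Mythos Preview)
Your argument is correct and follows essentially the same route as the paper: reduce to irreducible $A(M^{(n)})$-modules, use Lemma~\ref{irr} to split into type-(I) and type-(II), then invoke Lemma~\ref{l4.3} for type-(I) and Lemmas~\ref{module-3} and~\ref{module-4} for type-(II). The paper's proof is terser and omits the explicit count $2^{n-1}(n+2)$ and the choice of canonical representatives, which you supply correctly via the fixed-point-free involution of Lemma~\ref{lam-s2}; the only small point you glide over is the appeal to $C_2$-cofiniteness (Lemma~\ref{l3.1}(4)) to guarantee that every irreducible is ordinary so that Zhu's correspondence applies.
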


 \pf  Let $M$ be an irreducible module of $M^{(n)}$. Since $M^{(n)}$ is $C_{2}$-cofinite, $M=\oplus_{m=0}^{\infty}M_{(m)}$ is an ordinary module of $M^{(n)}$. Then the lowest weight space $W=M_{(0)}$ is an irreducible module of $A(M^{(n)})$.  By Lemma \ref{irr}, $W$ is a type-(I) or type-(II) module. If $W$ is a module of type-(I), then $W$ is an irreducible module of $A(M^{(n)})/I^{n}$. By Lemma \ref{l4.3}, $M$ is isomorphic to one of $W^{\underline{0}}(2k), 0\leq 2k\leq n+1$. So $M$ is isomorphic to one of $M^{\underline{0}}(2k), 0\leq 2k\leq n+1$. Then we may assume that $W$ is a type-(II) module of $A(M^{(n)})$. So there exist $N\geq 1$ and $0\leq i_{1}<i_{2}<\cdots <i_{N}\leq n$ such that $W$ is an irreducible $A(M^{(n)})/I(i_{1},\cdots,i_{N})$-module. Then the theorem follows from Lemma \ref{module-3} and Lemma \ref{module-4}. \qed

\section{Rationality   of $M^{(n)}$}
\def\theequation{5.\arabic{equation}}
\setcounter{equation}{0}

It is already known that $M^{(n)}$ is $C_{2}$-cofinite. In this section, we will prove that $M^{(n)}$ is rational. We first have the following result.

\begin{theorem}\label{th5.1}
The Zhu algebra $A(M^{(n)})$ is semisimple.
\end{theorem}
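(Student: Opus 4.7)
The plan is to establish the isomorphism $A := A(M^{(n)}) \cong \prod_\alpha A/\mathcal{I}_\alpha$ via the Chinese Remainder Theorem, where the ideals $\{\mathcal{I}_\alpha\}$ are $I^n$ together with the ideals $I(i_1,\ldots,i_N)$ introduced in Section~4, and then to conclude semisimplicity of $A$ from the semisimplicity of each quotient. By Lemma~\ref{l3.1}, $M^{(n)}$ is $C_2$-cofinite, so $A$ is finite-dimensional, and standard ring-theoretic notions apply.

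First I would verify that each quotient is semisimple. By Lemma~\ref{l4.3}, $A/I^n \cong T^{n+1}$ is a quotient of the semisimple group algebra $\mathbb{C}[S_{n+1}]$, and any quotient of a finite-dimensional semisimple algebra is semisimple. By Lemma~\ref{l4.4}, $A/I(i_1,\ldots,i_N)$ is a quotient of $T^N \otimes T^{n+1-N} \otimes \bar{P}(N)$; since $\bar{P}(N) \cong \mathbb{C}$ (every $x^{ij}$ is identified with the scalar $\frac{1}{16}$), this is in turn a quotient of the semisimple algebra $T^N \otimes T^{n+1-N}$, hence semisimple. Next I would establish pairwise comaximality: for any two distinct ideals in the list, one can locate a generator $[\omega^{ij}]$ that is forced to the scalar $\frac{1}{16}$ modulo one ideal while satisfying $x(x-\frac{1}{2})=0$ modulo the other; then $[\omega^{ij}]([\omega^{ij}]-\frac{1}{2})$ lies in the second ideal but reduces to the nonzero scalar $-\frac{7}{256}$ modulo the first, so their sum contains a unit. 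A parallel trick handles two distinct type-(II) ideals by exploiting a generator on which their prescribed scalar values disagree.

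The main obstacle is proving that $\bigcap_\alpha \mathcal{I}_\alpha = 0$. Once each $A/\mathcal{I}_\alpha$ is known to be semisimple, a standard argument identifies this intersection with the Jacobson radical $J(A)$: the inclusion $\bigcap_\alpha \mathcal{I}_\alpha \subseteq J(A)$ follows because every simple $A$-module factors through some $A/\mathcal{I}_\alpha$ by Theorem~\ref{th4.1}, and the reverse inclusion holds because $J(A/\mathcal{I}_\alpha)=0$ for each $\alpha$. Thus the task reduces to verifying that the direct sum of all classified simples is faithful as an $A$-module. My plan is to achieve this using the spanning set from Lemma~\ref{l4.10} together with the eigenvalue data in Lemmas~\ref{Vira-semi} and~\ref{l4.1}: write any candidate element of $\bigcap_\alpha \mathcal{I}_\alpha$ as a polynomial in the $[\omega^{ij}]$, then argue via the three-valued spectrum $\{0, \tfrac{1}{2}, \tfrac{1}{16}\}$ of each $[\omega^{ij}]$ and the symmetric-group structure of Lemma~\ref{l4.3} that such an element must act nontrivially on some irreducible enumerated in Theorem~\ref{th4.1}, contradicting membership in every $\mathcal{I}_\alpha$. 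Combining the three steps with CRT yields $A \cong \prod_\alpha A/\mathcal{I}_\alpha$, a product of semisimple algebras, hence semisimple.
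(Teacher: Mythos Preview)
Your CRT framework is sound through the reduction: each quotient $A/I^n$ and $A/I(i_1,\ldots,i_N)$ is semisimple by Lemmas~\ref{l4.3} and~\ref{l4.4}, the comaximality argument works, and you correctly identify $\bigcap_\alpha \mathcal{I}_\alpha = J(A)$. But the entire content of the theorem now sits in the claim $J(A)=0$, and here your proposal collapses into a restatement of the goal. Saying that a nonzero element of the intersection ``must act nontrivially on some irreducible enumerated in Theorem~\ref{th4.1}'' is precisely the assertion that $J(A)=0$; invoking the spanning set of Lemma~\ref{l4.10} and the three-valued spectrum does not by itself produce such an irreducible, because an element of $J(A)$ can perfectly well be a nonzero polynomial in the $[\omega^{ij}]$ that happens to annihilate every simple. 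You have not supplied any mechanism that rules this out.

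The paper sidesteps this difficulty entirely by working on the module side rather than the ring side: it takes an arbitrary $A(M^{(n)})$-module $W$, uses the eigenspace decomposition of a single $[\omega^{pq}]$ to split $W=W^1\oplus(W^2+W^3)$, and then invokes the explicit $2\times 2$ matrices of Lemma~\ref{l4.1} to show that both summands are $A(M^{(n)})$-stable (the key point being that the unique two-dimensional irreducible of $A^{(pql)}$ is already contained in the $\{0,\tfrac12\}$-part). Iterating yields a decomposition of $W$ into type-(I) and type-(II) pieces, each of which is semisimple by Lemmas~\ref{l4.3} and~\ref{l4.4}. This is a direct proof that every module is completely reducible, and it never needs to touch $J(A)$ or faithfulness. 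If you want to salvage your approach, you would need exactly this module-theoretic splitting to establish $J(A)=0$---at which point the CRT scaffolding becomes redundant.
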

 \pf Let $W$ be  an $A(M^{(n)})$-module. If $[\omega^{ij}]([\omega^{ij}]-\frac{1}{2})=0$, for all $0\leq i<j\leq n$, then $W$ is a module for $A(M^{(n)})/I^{n}$. By Lemma \ref{l4.3}, $W$ is completely reducible. Now assume that there exists $\omega^{pq}$ such that $[\omega^{pq}]([\omega^{pq}]-\frac{1}{2})\neq 0$. Then there exists $0\neq u\in W$ such that $[\omega^{pq}]u=\frac{1}{16}u$. We may assume
$$
W=W^{1}\oplus (W^2+W^3),
$$
such that $$[\omega^{pq}]|_{W^1}=\frac{1}{16}, \ [\omega^{pq}]|_{W^2}=0, \ [\omega^{pq}]|_{W^3}=\frac{1}{2}.$$
As proof in Lemma \ref{l4.2}, we have $W^1$ is an $A(M^{(n)})$-module. We now prove that $W^2+W^3$ is also an $A(M^{(n)})$-module. It suffices to prove that for any $0\leq l\leq n$, $l\neq p,q$, $W^2+W^3$ is an module of $A^{(pql)}=\C[\omega^{pq}]+\C[\omega^{pl}]+\C[\omega^{ql}]$. We know that $W$ is a direct sum of irreducible one-dimensional or two-dimensional modules of $A^{(pql)}$. Let $U$ be an irreducible $A^{(pql)}$-submodule of $W$.  If $U$ is one dimensional, then there exits $u^i\in W^i$, $i=1,2,3$ such that $U=\C u=\C(u^1+u^2+u^3)$ and $\omega^{pq}(u^1+u^2+u^3)=\lambda(u^1+u^2+u^3)$, where $\lambda=\frac{1}{16},0,$ or $\frac{1}{2}$. It follows that $u=u^1$ , $u=u^2$, or $u=u^3$. Then $U\subseteq W^1$ or $U\subseteq W^2+W^3$. If $U$ is two-dimensional, then there exist $u^i,v^i\in W^i$, $i=1,2,3$ such that $u=u^1+u^2+u^3, v=v^1+v^2+v^3$ is a basis of $U$ and the matrix of $[\omega^{pq}]$ under this basis is
$$
\left[\begin{array}{ccc} 0 & 0\\
0 & \frac{1}{2}
\end{array}\right].
$$
Then $$[\omega^{pq}](u^1+u^2+u^3)=0, \ \ [\omega^{pq}](v^1+v^2+v^3)=\frac{1}{2}(v^1+v^2+v^3).$$
It follows that $u=u^2$, $v=v^3$. Therefore $U\subseteq W^2+W^3$. We prove that $W^2+W^3$ is an $A(M^{(n)})$-module.  We now consider $W^1$ and $W^2+W^3$ respectively. We may decompose $W^1$ and $W^2+W^3$ as we do for $W$. Continuing the process, we can decompose $W$ into direct sum of $A(M^{(n)})$-modules $M^1,\cdots,M^r$ such that for each $i$, $M^i$ is a type-(I) module or a type (II)-module. By Lemma \ref{l4.3} and Lemma \ref{l4.4}, we know that $M^i$, $i=1,2,\cdots,r$ are completely reducible.
\qed

 Let $M$ be an irreducible module of $M^{(n)}$. By the discussion in the above section, the lowest weight space $W$ of $M$ is a type-(I) or  type-(II) module of $A(M^{(n)})$. We correspondingly call $M$ a type-(I) or type-(II) $M^{(n)}$-module. We first  have the following lemma.
\begin{lem}\label{l6.1}
Let $M^1$ and $M^{2}$ be irreducible  $M^{(n)}$-modules of different type. Then $$Ext_{M^{(n)}}^1(M^2, M^1) = 0.$$
\end{lem}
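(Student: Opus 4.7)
The plan is to verify the hypothesis of Lemma~\ref{la1} for a suitable rational vertex operator subalgebra $U \subseteq M^{(n)}$ sharing the Virasoro vector $\omega$ with $M^{(n)}$. Without loss of generality $M^1$ is of type-(I) and $M^2$ is of type-(II), so there exists a pair of indices $(p,q)$ with $[\omega^{pq}]$ acting on the lowest weight space of $M^2$ by the eigenvalue $\frac{1}{16}$, while $[\omega^{pq}]([\omega^{pq}]-\frac{1}{2})=0$ on that of $M^1$.

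The candidate I propose is $U = L(c_n,0)\otimes M^{(n-1)} \subseteq M^{(n)}$, where $M^{(n-1)}$ is embedded via the first $n$ lattice copies $\alpha^{0},\dots,\alpha^{n-1}$. By the iterated Goddard-Kent-Olive coset construction the Virasoro vector of $U$ coincides with $\omega$, and $U$ is rational by the rationality of $L(c_n,0)$ together with the inductive hypothesis that $M^{(n-1)}$ is rational (the base cases $n=2,3$ are already established in \cite{CL}, \cite{DJL}, \cite{ADL}, and $M^{(1)} \cong L(\frac{1}{2},0)$ is trivially rational). Every irreducible $U$-submodule decomposes as $L(c_n, h^{(n)}_{r,s})\otimes W$ for an irreducible $M^{(n-1)}$-module $W$; the explicit $U$-decomposition of each $M^{\delta}(k)$ can be read off from \cite{LS} combined with Theorem~\ref{th4.1}.

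For irreducible $U$-submodules $N^1 = L(c_n,h_1)\otimes W^1$, $N = L(c_n,h)\otimes W$, $N^2 = L(c_n,h_2)\otimes W^2$ of $M^1$, $M^{(n)}$, $M^2$ respectively, Theorem~\ref{n2.2c} factors the fusion rule as
\[
I_{U}\left(\begin{tabular}{c}$N^1$\\ $N$\  $N^2$\end{tabular}\right) \cong I_{L(c_n,0)}\left(\begin{tabular}{c}$L(c_n,h_1)$\\ $L(c_n,h)$\  $L(c_n,h_2)$\end{tabular}\right) \otimes I_{M^{(n-1)}}\left(\begin{tabular}{c}$W^1$\\ $W$\  $W^2$\end{tabular}\right).
\]
The type-(I) vs.\ type-(II) discrepancy propagates to the $U$-decomposition. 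Using the $S_{n+1}$-symmetry of $M^{(n)}$ (permuting lattice copies) one may relabel so that either the distinguishing pair $(p,q)$ involves the index $n$, in which case the $L(c_n,0)$-Kac labels of constituents of $M^1$ and $M^2$ fall into complementary sectors making the triple $((r,s),(r_1,s_1),(r_2,s_2))$ non-admissible and Corollary~\ref{wang-lam-co} kills the first tensor factor; or else $(p,q) \subseteq \{0,\dots,n-1\}$ and the $M^{(n-1)}$-constituents $W^1$ and $W^2$ are of different types as $M^{(n-1)}$-modules, so the present lemma applied inductively at level $n-1$ annihilates the second tensor factor. Either way $I_U = 0$, and Lemma~\ref{la1} delivers $\mathrm{Ext}^1_{M^{(n)}}(M^2,M^1) = 0$.

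The principal obstacle is the combinatorial bookkeeping of the $L(c_n,0)\otimes M^{(n-1)}$-decomposition of every irreducible $M^{(n)}$-module listed in Theorem~\ref{th4.1}: one must verify explicitly that when the distinguishing pair involves index $n$, the constraint on the parity of the second Kac index of each $L(c_n,0)$-constituent cleanly separates type-(I) from type-(II), and, in the complementary case, that the induced types on $W^1$ and $W^2$ really differ. A secondary technical point is checking that the $S_{n+1}$-action on $M^{(n)}$ induces vertex operator algebra automorphisms that preserve the isomorphism types of $M^1$ and $M^2$ up to the necessary conjugation, so the reduction to a preferred position of $(p,q)$ is legitimate.
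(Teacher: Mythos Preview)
Your approach is more elaborate than needed and contains a genuine gap; the paper's argument is considerably simpler.

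\textbf{What the paper does.} The paper takes $U=L=L(c_1,0)\otimes\cdots\otimes L(c_n,0)$, the full Virasoro chain, which is rational with no inductive hypothesis on $M^{(n-1)}$. After relabeling the distinguishing pair to $(p,q)=(0,1)$, the whole argument rests on the single $c=\tfrac{1}{2}$ factor $L^{(01)}(\tfrac{1}{2},0)$ generated by $\omega^{01}$. One checks (via Lemma~\ref{l4.9}, Lemma~\ref{module-1} and the fusion rules in Theorem~\ref{wang}) that as $L^{(01)}(\tfrac{1}{2},0)$-modules both $M^{(n)}$ and the type-(I) module are direct sums of copies of $L(\tfrac{1}{2},0)$ and $L(\tfrac{1}{2},\tfrac{1}{2})$, while the type-(II) module is a direct sum of copies of $L(\tfrac{1}{2},\tfrac{1}{16})$. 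The Ising fusion rules immediately give $I_L\bigl(\begin{smallmatrix}N^1\\ N\ N^2\end{smallmatrix}\bigr)=0$ for every choice of irreducible $L$-submodules, and Lemma~\ref{la1} finishes. No induction on $n$, no $S_{n+1}$-relabeling beyond a single transposition, no case split.

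\textbf{The gap in your Case~B.} You write that ``the present lemma applied inductively at level $n-1$ annihilates the second tensor factor.'' But Lemma~\ref{l6.1} at level $n-1$ asserts $\mathrm{Ext}^1_{M^{(n-1)}}(W^2,W^1)=0$; it says nothing about the intertwining space $I_{M^{(n-1)}}\bigl(\begin{smallmatrix}W^1\\ W\ W^2\end{smallmatrix}\bigr)$. Vanishing of $\mathrm{Ext}^1$ does not imply vanishing of fusion. To actually kill that factor you would need a separate fusion computation for $M^{(n-1)}$, and the natural way to do it is precisely the $L(\tfrac{1}{2},0)$ argument the paper uses---at which point the induction and the choice $U=L(c_n,0)\otimes M^{(n-1)}$ are superfluous.

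\textbf{The difficulty in your Case~A.} Corollary~\ref{wang-lam-co} kills the $L(c_n,0)$-factor only when the \emph{second} Kac indices differ. But in the decomposition $M^{\delta'}(l)=\bigoplus_q M^{\overline{\delta}'}(q)\otimes L(c_n,h^{(n)}_{q+1,\,l+1})$ the second Kac index is $l+1$, determined by $l$ alone and independent of $\delta'$. The type-(I)/type-(II) dichotomy lives in $\delta'$, not in $l$, so nothing forces $l_1\neq l_2$. (This is exactly why the paper reserves the subalgebra $U=M^{(n-1)}\otimes L(c_n,0)$ and Corollary~\ref{wang-lam-co} for Lemmas~\ref{l6.2} and~\ref{l6.3}, where the two modules genuinely have different $l$-values.)

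In short: replace your $U$ by the Virasoro chain $L$ and use only the $c=\tfrac{1}{2}$ fusion rules for $\omega^{pq}$; the argument then goes through in one line.
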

\pf We first assume that $M^1$ is a tpye-(I) irreducible $M^{(n)}$-module and $M^2$ is a type-(II) irreducible $M^{(n)}$-module. Let $W^1$ and $W^2$ be the  irreducible $A(M^{(n)})$-modules corresponding to $M^1$ and $M^2$ respectively. Note that for any $0\leq i<j\leq n$,
\begin{equation}\label{type1}
[\omega^{ij}]([\omega^{ij}]-\frac{1}{2})|_{W^1}=0.
\end{equation}
On the other hand, there exists $[\omega^{pq}]\in A(M^{(n)})$ such that
\begin{equation}\label{type2}
[\omega^{pq}]|_{W^2}=\frac{1}{16}.
\end{equation}
For convenience, without loss of generality, we may assume that $p=0,q=1$. 
Let $L^{(01)}(\frac{1}{2},0)$ be the Virasoro vertex operator algebra generated by $\omega^{01}$. Note that $M^{(n)}$ contains a vertex operator subalgebra $L=L(c_1,0)\otimes L(c_2,0)\otimes \cdots\otimes L(c_n,0)$ with the same Virasoro element as $M^{(n)}$. Here  $L(c_1,0)=L^{(01)}(\frac{1}{2},0)$.
By Lemma \ref{l3.1} (also see Lemma \ref{l4.10}), Lemma \ref{l4.9} (also see Lemma \ref{module-1}), Theorem \ref{n2.2c} and fusion rules of $L^{(01)}(\frac{1}{2},0)$ (see Theorem \ref{wang}), $M^{(n)}$ is a direct sum of irreducible $L^{(01)}(\frac{1}{2},0)$-submodules isomorphic to $L(\frac{1}{2},0)$ or $L(\frac{1}{2},\frac{1}{2})$. By (\ref{type1})-(\ref{type2}), Theorem \ref{n2.2c} and Theorem \ref{wang}, $M^1$ is a direct sum of irreducible $L^{(01)}(\frac{1}{2},0)$-submodules isomorphic to $L(\frac{1}{2},0)$ or $L(\frac{1}{2},\frac{1}{2})$ and $M^2$ is a direct sum of irreducible $L^{(01)}(\frac{1}{2},0)$-submodules isomorphic to  $L(\frac{1}{2},\frac{1}{16})$.

Let $M$ be an extension of $M^2$ by $M^1.$  Then  there is a
natural $L$-module embedding $M^2\rightarrow M$. So we may assume
that $M=M^1\oplus M^2$ as $L$-modules as $L$ is rational. Let
$N^1,N,N^2$ be any irreducible $L$-submodules of $M^1,M^{(n)},M^2,$
respectively. Then $P_{N^1}Y(u,z)|_{N^2}$ for $u\in N$ is an
intertwining operator of type $I_{L}\left(\begin{tabular}{c}
$N^1$\\
$N$\  $N^2$
\end{tabular}\right)$, where  $P_{N^1}$ is the projection from $M$ to $N_1.$ Since both $N$ and $N^1$ are direct sums of irreducible $L^{(01)}(\frac{1}{2},0)$-modules isomorphic to $L(\frac{1}{2},0)$ or $L(\frac{1}{2},\frac{1}{2})$, and $N^2$ is a direct sum of irreducible $L(\frac{1}{2},0)$-modules isomorphic to $L(\frac{1}{2},\frac{1}{16})$,
 by Theorem \ref{wang} and Theorem \ref{n2.2c}, $P_{N^1}Y(u,z)|_{N^2}=0.$ Note that $N^1, N, N^2$ are arbitrary, we see that
$u_nM^2\subset M^2$ for any $u\in M^{(n)}$ and $n\in \Z.$ As a result,
$M^2$ is an $M^{(n)}$-module and ${\rm Ext^1}_{M^{(n)}}(M^2, M^1) = 0.$ 

If $M^1$ is a tpye-(II) irreducible $M^{(n)}$-module and $M^2$ is a type-(I) irreducible $M^{(n)}$-module, the proof is similar.
\qed

\begin{lem}\label{l6.2}
Let $M^1$ and $M^2$ be two inequivalent irreducible $M^{(n)}$-modules of type-(I).
Then $Ext_{M^{(n)}}^1(M^2, M^1) = 0.$
\end{lem}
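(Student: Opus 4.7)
The plan is to apply Lemma \ref{la1} with the rational vertex operator subalgebra $U = L := \bigotimes_{i=1}^{n} L(c_i, 0) \subset M^{(n)}$. By Lemma \ref{l3.1}(3) together with the iterated Goddard--Kent--Olive construction recalled in the introduction (see also \cite{LY}, \cite{LS}), $L$ is rational and shares the Virasoro vector of $M^{(n)}$, so it is exactly the kind of subalgebra to which Lemma \ref{la1} applies.

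Write the two modules as $M^1 = M^{\underline{0}}(2k_1)$ and $M^2 = M^{\underline{0}}(2k_2)$ with $k_1 \neq k_2$. First I would record, via the iterated GKO branching
\[ L_{\widehat{\frak{sl}}_{2}}(i,2a) \otimes L_{\widehat{\frak{sl}}_{2}}(1,0) \;=\; \bigoplus_{b} L_{\widehat{\frak{sl}}_{2}}(i+1, 2b) \otimes L(c_i, h^{(i)}_{*,*}), \]
the explicit decomposition of $M^{(n)}$ and of each $M^{\underline{0}}(2k)$ into irreducible $L$-summands of the form $L(c_1, h^{(1)}_{r_1,s_1}) \otimes \cdots \otimes L(c_n, h^{(n)}_{r_n,s_n})$. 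The structural feature I would exploit is that the last Virasoro label $s_n$ is controlled by the $\widehat{\frak{sl}}_{2}(n+1)$-label of the ambient $L_{\widehat{\frak{sl}}_{2}}(n+1, 2l) \otimes M^{\underline{0}}(2l)$ summand inside $V_{A_{1}^{n+1}}$: all $L$-summands of $M^{(n)}$ itself carry $s_n = 1$, while all $L$-summands of $M^{\underline{0}}(2k)$ carry $s_n = 2k+1$, after choosing the canonical representative under the identification $h^{(n)}_{r,s} = h^{(n)}_{n+2-r, n+3-s}$ of Lemma \ref{lam-s1}(1).

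Now let $N^1 \subset M^1$, $N \subset M^{(n)}$, $N^2 \subset M^2$ be arbitrary irreducible $L$-submodules. Iterating Theorem \ref{n2.2c}, the fusion rule $I_L\binom{N^1}{N\;N^2}$ factorizes as a product of $n$ Virasoro minimal-model fusion rules, one for each tensor factor $L(c_i, 0)$. I would then examine the factor $i = n$: the middle entry is of the form $L(c_n, h^{(n)}_{2p+1, 1})$, while the outer entries are of the form $L(c_n, h^{(n)}_{*, 2k_1+1})$ and $L(c_n, h^{(n)}_{*, 2k_2+1})$. Since $k_1 \neq k_2$, Corollary \ref{wang-lam-co} forces this single factor to vanish, hence the whole fusion space is zero. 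Applying Lemma \ref{la1} then yields $Ext^1_{M^{(n)}}(M^2, M^1) = 0$, completing the proof.

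The main technical obstacle is the bookkeeping in the first step: one must confirm carefully, using the explicit coset branching formulas of \cite{LY} and \cite{LS}, that the labels $(r_i, s_i)$ produced for the $L$-summands of $M^{(n)}$ all satisfy $s_i$ odd and in particular $s_n = 1$, and that the $L$-summands of $M^{\underline{0}}(2k)$ satisfy $s_n = 2k+1$. The delicate point is that the symmetry $h^{(n)}_{r,s} = h^{(n)}_{n+2-r, n+3-s}$ gives two names to the same simple Virasoro module, so one must pick the representative consistent with the branching rules propagated from the factors $L(c_i, 0)$ for $i < n$; once this canonical choice is fixed, the hypothesis of Corollary \ref{wang-lam-co} is satisfied on the nose and the argument flows smoothly.
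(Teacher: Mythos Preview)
Your approach is correct and lands on the same vanishing criterion as the paper, namely that the $L(c_n,0)$-factor of the fusion is zero by Corollary~\ref{wang-lam-co}; the difference is only in the choice of the rational subalgebra $U$. The paper takes $U=M^{0^{n-1}}(0)\otimes L(c_n,0)$ and argues by induction on $n$ (using that $M^{(2)}$ and $M^{(3)}$ are already known to be rational) so that $U$ is rational. You instead take $U=L=\bigotimes_{i=1}^{n}L(c_i,0)$, which is rational outright; this bypasses the induction entirely and is therefore a little cleaner. The bookkeeping you flag is lighter than you fear: the one-step decomposition
\[
M^{0^n}(2k)=\bigoplus_{0\leq 2l\leq n} M^{0^{n-1}}(2l)\otimes L\bigl(c_n, h^{(n)}_{2l+1,2k+1}\bigr)
\]
(already used in the paper) fixes the last Virasoro label $s_n=2k+1$ on the nose, and further decomposing the $M^{0^{n-1}}(2l)$ factor over $\bigotimes_{i<n}L(c_i,0)$ does not disturb it, so the symmetry $h^{(n)}_{r,s}=h^{(n)}_{n+2-r,n+3-s}$ never needs to be invoked explicitly.
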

{\bf Proof}  \ Since for $n=2$ and $n=3$, $M^{(n)}$ are rational, we can show the lemma by induction on $n$. Assume that the lemma holds for  $M^{(m)}$, $2\leq m\leq n-1$. We now consider $M^{(m)}$, $m=n$.  By Lemma \ref{l4.3} and Theorem \ref{th4.1}, $M^1=M^{{0^{n}}}(2l_1)$ and $M^2=M^{{0^{n}}}(2l_2)$ for some $0< 2l_1,2l_2\leq n+1$. By Theorem \ref{th5.1}, if two irreducible $M^{(n)}$-modules have the same lowest weight, then the extension of one module by the other  is trivial. So we may assume that $l_1\neq l_2$. Note that
$$V_{\Z\alpha^0}\otimes V_{\Z\alpha^1}\otimes\cdots\otimes V_{\Z\alpha^n}=\sum\limits_{0\leq 2k\leq n+1}L_{\widehat{\frak g}}(n+1,2k)\otimes M^{0^{n}}(2k)$$
and
$$
M^{0^n}(2k)=\oplus_{0\leq 2l\leq n}M^{0^{n-1}}(2l)\otimes L(c_{n}, h^{(n)}_{2l+1,2k+1}),
$$
where $0^{n-1}=(0,0,\cdots,0)\in\Z^{n-1}_{2}$.
So $M^{(n)}$ contains a subalgebra $U=M^{0^{n-1}}(0)\otimes L(c_{n}, 0)$ with the same Virasoro vector. By inductive assumption and the fact that $L(c_{n},0)$ is rational,  $U$ is rational. Furthermore, we have
$$
M^{(n)}=M^{0^n}(0)=\oplus_{0\leq 2l\leq n}M^{0^{n-1}}(2l)\otimes L(c_{n}, h^{(n)}_{2l+1,1}),
$$
$$
M^1=M^{0^n}(2l_1)=\oplus_{0\leq 2l\leq n}M^{0^{n-1}}(2l)\otimes L(c_{n}, h^{(n)}_{2l+1,2l_1+1}),
$$
$$
M^2=M^{0^n}(2l_2)=\oplus_{0\leq 2l\leq n}M^{0^{n-1}}(2l)\otimes L(c_{n}, h^{(n)}_{2l+1,2l_2+1}),
$$
and $M^{0^{n-1}}(2l), 0\leq 2l\leq n$ are irreducible $M^{0^{n-1}}(0)$-modules.
Let $N, N^{1}$ and $N^{2}$ be irreducible $U$-submodules of $M^{(n)}, M^{1}$ and $M^2$ respectively. Since $l_1\neq l_{2}$, by Corollary \ref{wang-lam-co}, for $0\leq 2k,2l,2p\leq n$,
$$I_{L(c_{n},0)}\left(\begin{tabular}{c}
$L(c_{n},h^{(n)}_{2p+1,2l_1+1})$\\
$L(c_n, h^{(n)}_{2k+1,1})$ $L(c_n, h^{(n)}_{2l+1,2l_2+1})$\\
\end{tabular}\right)=0.$$
By Theorem \ref{n2.2c}, we have
$$I_{U}\left(\begin{tabular}{c}
$N^1$\\
$N$\  $N^2$
\end{tabular}\right)=0.$$
Then the lemma follows from Lemma \ref{la1}. \qed

\begin{lem}\label{l6.3}
Let $M^1$ and $M^2$ be two inequivalent irreducible $M^{(n)}$-modules of type-(II).
Then $Ext_{M^{(n)}}^1(M^2, M^1) = 0.$
\end{lem}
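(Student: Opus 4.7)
The plan is to imitate the strategies of Lemmas~\ref{l6.1} and~\ref{l6.2}: I will exhibit in each subcase a rational vertex operator subalgebra $U\subseteq M^{(n)}$ with the same Virasoro vector, compute the relevant fusion rules over $U$ using Ising/parafermion techniques, and then invoke Lemma~\ref{la1}. By Theorem~\ref{th5.1}, I may assume that $M^1$ and $M^2$ have distinct lowest weights. By Lemma~\ref{module-3}, every irreducible type-(II) module is determined by a subset $\{i_1,\ldots,i_N\}\subseteq\{0,\ldots,n\}$ (the indices at which $[\omega^{pq}]$ takes eigenvalue $\tfrac{1}{16}$ on the lowest weight space precisely when $|\{p,q\}\cap\{i_1,\ldots,i_N\}|=1$) together with a level parameter, while Lemma~\ref{lam-s2} identifies the pair $(\{i_1,\ldots,i_N\},k)$ with its complement-and-flip $(\{i_1,\ldots,i_N\}^c,n+1-k)$. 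Hence $(M^1,M^2)$ falls into one of two mutually exclusive cases.

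Case~1: The subsets associated with $M^1$ and $M^2$ differ even after passing to complements. Then there exist $0\leq i<j\leq n$ such that $[\omega^{ij}]$ acts on the lowest weight space of exactly one of the two modules as $\tfrac{1}{16}$, while on the other it satisfies $[\omega^{ij}]([\omega^{ij}]-\tfrac{1}{2})=0$. Exactly as in Lemma~\ref{l6.1}, the Virasoro subalgebra $L^{(ij)}(\tfrac{1}{2},0)\subseteq M^{(n)}$ gives a decomposition in which $M^{(n)}$ is a sum of copies of $L(\tfrac{1}{2},0)$ and $L(\tfrac{1}{2},\tfrac{1}{2})$, one of $M^1,M^2$ is a sum of copies of $L(\tfrac{1}{2},\tfrac{1}{16})$, and the other is a sum of copies of $L(\tfrac{1}{2},0)$ and $L(\tfrac{1}{2},\tfrac{1}{2})$. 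The Ising fusion rules in Theorem~\ref{wang} kill every candidate intertwining operator between corresponding summands, Theorem~\ref{n2.2c} transfers the vanishing to $L^{(ij)}(\tfrac{1}{2},0)$-submodules of $M^{(n)}, M^1, M^2$, and Lemma~\ref{la1} concludes.

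Case~2: $M^1$ and $M^2$ correspond to the same subset (up to complementation) but have distinct level parameters. I would argue by induction on $n$, the base cases $n=2,3$ being the already-known rationality of $M^{(2)}$ and $M^{(3)}$. Paralleling the proof of Lemma~\ref{l6.2}, pass to the rational subalgebra $U=M^{(n-1)}\otimes L(c_n,0)\subseteq M^{(n)}$ sharing the Virasoro vector with $M^{(n)}$, and establish an analogue of $M^{0^n}(2k)=\bigoplus_{0\leq 2m\leq n}M^{0^{n-1}}(2m)\otimes L(c_n,h^{(n)}_{2m+1,2k+1})$ for the shifted modules $M^{\delta'}(k)$: each becomes a direct sum of tensor products of an irreducible $M^{(n-1)}$-module with a Virasoro module $L(c_n,h^{(n)}_{r,s})$ in which the second Kac index $s$ is determined by the level parameter alone. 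Because the levels of $M^1,M^2$ are distinct and not swapped by $k\leftrightarrow n+1-k$, the second Kac indices occurring in $M^1$ and $M^2$ differ, so Corollary~\ref{wang-lam-co} kills every Virasoro fusion factor. The inductive hypothesis together with Lemmas~\ref{l6.1} and~\ref{l6.2} kills the $M^{(n-1)}$-factors, Theorem~\ref{n2.2c} assembles these into the vanishing of the relevant $U$-fusion rule, and Lemma~\ref{la1} delivers $\mathrm{Ext}^1_{M^{(n)}}(M^2,M^1)=0$.

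The main obstacle is the structural decomposition in Case~2: one must generalize the identity $M^{0^n}(2k)=\bigoplus_{0\leq 2m\leq n}M^{0^{n-1}}(2m)\otimes L(c_n,h^{(n)}_{2m+1,2k+1})$ from the "vacuum" modules $M^{0^n}(2k)$ to the arbitrary shifted-lattice modules $M^{\delta'}(k)$, and verify that the second Kac index $s$ of the Virasoro factor depends only on the level parameter $k$ (and not on $\delta'$ or on the auxiliary parafermion label $m$). Once this bookkeeping is in place, Corollary~\ref{wang-lam-co} does the heavy lifting and everything else is routine.
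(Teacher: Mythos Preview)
Your strategy coincides with the paper's: Case~1 (the associated subsets differ even after complementation) is reduced to the Ising argument of Lemma~\ref{l6.1}, and Case~2 (same subset, different level) uses the branching $M^{\delta'}(l)=\bigoplus_{q}M^{\overline{\delta}'}(q)\otimes L(c_n,h^{(n)}_{q+1,\,l+1})$ over $U=M^{(n-1)}\otimes L(c_n,0)$ so that $l_1\neq l_2$ (with $l_1-l_2\in 2\Z$) kills the Virasoro fusion factor via Corollary~\ref{wang-lam-co}, whence Theorem~\ref{n2.2c} and Lemma~\ref{la1} finish. One small correction: in Case~2 you neither need nor can ensure that the $M^{(n-1)}$-fusion factors vanish---Theorem~\ref{n2.2c} makes the Virasoro vanishing alone sufficient, and the inductive hypothesis on $M^{(n-1)}$ is used only to guarantee that $U$ is rational so that Lemma~\ref{la1} applies.
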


 \pf Let $W^1$ and $W^2$ be the lowest weight spaces of $M^1$ and $M^2$ respectively. Then $W^1$ and $W^2$ are two irreducible $A(M^{(n)})$-modules of type-(II). So there exist $1\leq N_1,N_2\leq n$ and  $0\leq i_{1}<i_{2}<\cdots<i_{N_1}\leq n$, $0\leq j_{1}<j_{2}<\cdots<j_{N_2}\leq n$ such that
$$
[\omega^{ij}]=\frac{1}{16}, \ [\omega^{kl}]([\omega^{kl}]-\frac{1}{2})=0
$$
on $W^1$ for $i\in\{i_{1},i_{2},\cdots,i_{N_1}\}$, $j\in \{i_{1},i_{2},\cdots,i_{N_1}\}^c$, $k,l\in\{i_{1},\cdots,i_{N_1}\}$ or $k,l\in \{i_{1},i_{2},\cdots,i_{N_1}\}^c$, and
$$
[\omega^{ij}]=\frac{1}{16}, \ [\omega^{kl}]([\omega^{kl}]-\frac{1}{2})=0
$$
on $W^2$ for $i\in\{j_{1},j_{2},\cdots,j_{N_2}\}$, $j\in \{j_{1},j_{2},\cdots,j_{N_2}\}^c$, $k,l\in\{j_{1},\cdots,j_{N_2}\}$ or $k,l\in \{j_{1},j_{2},\cdots,j_{N_2}\}^c$.
If $\{i_{1},\cdots,i_{N_1}\}\neq \{j_{1},\cdots,j_{N_2}\}$ and $\{i_{1},\cdots,i_{N_1}\}\neq \{j_{1},\cdots,j_{N_2}\}^c$, then there exist $0\leq i_{p},i_{q}\leq n, 1\leq p\neq q\leq N_1$ such that $i_{p}\in\{j_{1},\cdots,j_{N_2}\}^c, i_{q}\in\{j_{1},\cdots,j_{N_2}\}$. So
$$
[\omega^{i_{p}i_{q}}]([\omega^{i_{p}i_{q}}]-\frac{1}{2})|_{W^1}=0.
$$
On the other hand, we have
$$
([\omega^{i_{p}i_{q}}]-\frac{1}{16})|_{W^2}=0.
$$
 As the proof of Lemma \ref{l6.1}, we have ${\rm Ext}_{M^{(n)}}^1(M^2, M^1) = 0$. If $\{i_{1},\cdots,i_{N_1}\}= \{j_{1},\cdots,j_{N_2}\}$ or $\{i_{1},\cdots,i_{N_1}\}=\{j_{1},\cdots,j_{N_2}\}^c$.  By Lemma \ref{lam-s2}, for $\delta,\sigma\in\Z_{2}^{n}$, $1^n=(1,1,\cdots,1)\in\Z_{2}^n$ and $0\leq k,l\leq n+1$, $M^{\delta}(k)\cong M^{\sigma}(l)$ if and only if either $\delta=\sigma$ and $k=l$ or $l=n+1-k$ and $\sigma=\delta+1^{n}$.
 Then by Lemma \ref{module-4} and Lemma \ref{module-3}, we may assume that $\{i_{1},\cdots,i_{N_1}\}= \{j_{1},\cdots,j_{N_2}\}$ and $M^1\cong M^{{\delta'}}(l_1)$ and $M^2\cong M^{{\delta'}}(l_2)$, for some $0\leq l_1\neq l_2\leq n$ such that $l_1-N_1\in 2\Z$, $l_2-N_1\in2\Z$, where ${\delta'}=(a_{0},\cdots,a_{n})$ such that $a_{i}=1$, for $i=i_{1},i_{2},\cdots,i_{N_1}$ and $a_{j}=0$ for $j\in\{i_{1},\cdots,i_{N_1}\}^c$, and
$$
 V_{\Z\alpha^{0}+\frac{1}{2}a_{0}\alpha^0}\otimes V_{\Z\alpha^1+\frac{1}{2}a_{1}\alpha^1}\otimes\cdots\otimes\cdots\otimes V_{\Z\alpha^{n}+\frac{1}{2}a_{n}\alpha^n}$$$$=\oplus_{0\leq p\leq n+1, p-N_1\in2\Z}L_{\widehat{{\frak g}}}(n+1,p)\otimes M^{{\delta'}}(p).
 $$
 Then
\begin{equation}\label{decom-5} M^1=M^{\underline{\delta}}(l_1)=\oplus_{0\leq q_1\leq n, q_1-|{\overline{\delta}'}|\in2\Z}M^{\overline{\delta'}}(q_1)\otimes L(c_{n}, h^{(n)}_{q_1+1,l_1+1}),
\end{equation}
\begin{equation}\label{decom-6}
M^2=M^{\overline{\delta}'}(l_2)=\oplus_{0\leq q_2\leq n,q_2-|\overline{\delta}'|\in2\Z}M^{\overline{\delta}'}(q_2)\otimes L(c_{n}, h^{(n)}_{q_2+1,l_2+1}),
\end{equation}
where $\overline{\delta}'=(a_{0},a_{1},\cdots,a_{n-1})$, $|\overline{\delta}'|=\sum\limits_{i=0}^{n-1}a_{i}$. Recall that
$$
M^{(n)}=M^{0^{n}}(0)=\oplus_{0\leq 2l\leq n}M^{0^{n-1}}(2l)\otimes L(c_{n}, h^{(n)}_{2l+1,1}).
$$
Let $N, N^{1}$, and $N^{2}$ be irreducible $U$-submodules of $M^{(n)}, M^{1}$ and $M^2$ respectively, where $U=M^{0^{n-1}}(0)\otimes L(c_{n}, 0)$ as above. Note that  $l_1\neq l_2$, $l_1-l_2\in 2\Z$ and in  (\ref{decom-5})-(\ref{decom-6}), $q_1-q_2\in 2\Z$. Then by Corollary \ref{wang-lam-co}
$$I_{L(c_{n},0)}\left(\begin{tabular}{c}
$L(c_{n},h^{(n)}_{q_1+1,l_1+1})$\\
$L(c_n, h^{(n)}_{2k+1,1})$ $L(c_n, h^{(n)}_{q_2+1,l_2+1})$\\
\end{tabular}\right)=0.$$
It follows from Theorem \ref{n2.2c} that
$$I_{U}\left(\begin{tabular}{c}
$N^1$\\
$N$\  $N^2$
\end{tabular}\right)=0.$$
By Lemma \ref{la1}, the lemma holds. \qed

By Lemmas \ref{l6.1}-\ref{l6.3} and Theorem \ref{n2.2}, we get our main result of this section.
\begin{theorem}\label{th6.1}
The simple vertex operator algebra $M^{(n)}$ is rational.
\end{theorem}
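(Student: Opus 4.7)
The plan is to apply Theorem~\ref{n2.2}, which reduces rationality of a $C_2$-cofinite vertex operator algebra $V$ to the vanishing $\mathrm{Ext}^1_V(N,M)=0$ for every pair of irreducible $V$-modules. Since $M^{(n)}$ is $C_2$-cofinite by Lemma~\ref{l3.1}(4) and its irreducible modules are fully classified in Theorem~\ref{th4.1}, the remaining task is precisely to verify the extension-vanishing hypothesis pair by pair.

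My first step is to dispatch the case when $M^1$ and $M^2$ have the same lowest weight, which in particular covers $M^1\cong M^2$. For such a pair, any admissible extension
$$0 \longrightarrow M^1 \longrightarrow M \longrightarrow M^2 \longrightarrow 0$$
restricts on the lowest graded piece to a short exact sequence of $A(M^{(n)})$-modules; by Theorem~\ref{th5.1} the Zhu algebra is semisimple, so this restricted sequence splits, and a standard VOA-module argument promotes the splitting at the bottom degree to a splitting of admissible $M^{(n)}$-modules. Hence $\mathrm{Ext}^1_{M^{(n)}}(M^2,M^1)=0$ in this diagonal case, confirming the remark made just after Theorem~\ref{th5.1} in the introduction.

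Next I appeal to the type-(I)/type-(II) dichotomy introduced after Lemma~\ref{irr}: every irreducible $M^{(n)}$-module is of exactly one of these two types, so any ordered pair of inequivalent irreducibles falls into one of three mutually exclusive situations--mixed types, both type-(I), or both type-(II)--handled respectively by Lemmas~\ref{l6.1}, \ref{l6.2}, and \ref{l6.3}. Combined with the diagonal case above, this exhausts every pair $(M^1,M^2)$ of irreducible $M^{(n)}$-modules and yields $\mathrm{Ext}^1_{M^{(n)}}(M^2,M^1)=0$ in all cases. Theorem~\ref{n2.2} then concludes that $M^{(n)}$ is rational.

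The substantive obstacle is not located in Theorem~\ref{th6.1} itself but in the three extension-vanishing lemmas feeding into it, and in particular in Lemma~\ref{l6.3}: for two inequivalent type-(II) modules whose defining index sets $\{i_1,\dots,i_{N_1}\}$ and $\{j_1,\dots,j_{N_2}\}$ coincide or are complementary, one cannot use a bare Virasoro subalgebra and must instead descend to the parafermion-type rational subalgebra $U=M^{0^{n-1}}(0)\otimes L(c_n,0)$ with the same conformal vector, invoke the inductive rationality of $M^{(m)}$ for $m<n$, and then match intertwining operators against the vanishing provided by Corollary~\ref{wang-lam-co} before invoking Lemma~\ref{la1}. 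Once these three inputs are assembled, the present theorem is just a clean bookkeeping step on top of Theorem~\ref{n2.2}.
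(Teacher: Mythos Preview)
Your proposal is correct and follows essentially the same route as the paper: the paper's proof of this theorem is the single sentence ``By Lemmas~\ref{l6.1}--\ref{l6.3} and Theorem~\ref{n2.2}, we get our main result of this section,'' and you have reproduced exactly that architecture, supplying in addition the diagonal/same-lowest-weight case via the semisimplicity of $A(M^{(n)})$ (Theorem~\ref{th5.1}). The paper handles that diagonal case inside the proof of Lemma~\ref{l6.2} rather than as a separate step, but the content is identical; your summary of the harder inductive step inside Lemma~\ref{l6.3} is also accurate.
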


\vskip 0.5cm
{\bf Acknowledgements} \ The first author of the paper would like to thank Ching-hung Lam for valuable discussion and for telling her his paper in {\em Taiwanese Journal of Mathematics (2008)} jointly with Shinya Sakuma. The second author appreciate the hospitality from Shanghai Jiaotong University during his visit to Shanghai, when this work was carried out.

\end{document}